\documentclass{amsart}

\usepackage{todonotes}

\usepackage{amsthm}
\usepackage{amssymb}
\usepackage{amsmath}
\usepackage{xfrac}
\usepackage{array}
\usepackage{mathtools}
\usepackage{stmaryrd}
\usepackage{bbm}
\usepackage[multiple]{footmisc}

\newtheorem{prop}{Proposition}
\newtheorem{definition}[prop]{Definition}
\newtheorem{theorem}[prop]{Theorem}

\newtheorem{thm}[prop]{Theorem}
\newtheorem{cor}[prop]{Corollary}
\newtheorem{lemma}[prop]{Lemma} 
\newtheorem{remark}[prop]{Remark}

\numberwithin{prop}{section}

\newcommand{\Sub}[1]{\ensuremath{\mathrm{Sub}\left( #1 \right) }}

\begin{document}

\title[Stabilizers and critical exponents]{Critical exponents of invariant random subgroups in negative curvature}
\author{Ilya Gekhtman  and Arie Levit}

\begin{abstract}
Let $X$ be a proper geodesic Gromov hyperbolic metric space and let  $G$ be a cocompact group of isometries of $X$ admitting  a uniform lattice.  Let $d$ be the Hausdorff dimension of the Gromov boundary $\partial X$. We define the critical exponent $\delta(\mu)$ of any  discrete invariant random subgroup $\mu$ of the locally compact group $G$   and show that   $\delta(\mu) > \frac{d}{2}$ in general and that $\delta(\mu) = d$ if $\mu$ is of divergence type. 
Whenever  $G$ is a rank-one simple Lie group with Kazhdan's property $(T)$  it follows that  an ergodic invariant random subgroup of divergence type  is  a lattice. One of our main tools is a maximal ergodic theorem for actions of hyperbolic groups due to Bowen and Nevo.
\end{abstract}

\maketitle


\section{Introduction}
\label{sec:introduction}

Let $X$ be a proper geodesic Gromov hyperbolic  metric space  and let $\mathrm{Isom}(X)$ denote its group of isometries. Fix a basepoint $o\in X$.
The critical exponent $\delta(\Gamma)$ of a discrete subgroup $\Gamma$ of $\mathrm{Isom}(X)$ is given by
$$\delta(\Gamma)= \inf \{s:\sum_{\gamma \in \Gamma}e^{-sd_X(o,\gamma o)}<\infty \}.$$
The subgroup $\Gamma$  is   of divergence type if the above series diverges at the critical exponent.
A discrete subgroup  of $\mathrm{Isom}(X)$ is a uniform lattice if it acts cocompactly on $X$. A uniform lattice is always of divergence type and its critical exponent   is equal to the Hausdorff dimension $\dim_\mathrm{H}(\partial X)$   of the Gromov boundary of $X$.

For example, if $X$ is  the $n$-dimensional hyperbolic space $\mathbb{H}^n$ and $\Gamma$ is a uniform lattice in $\mathrm{Isom}(\mathbb{H}^n)$ then $\delta(\Gamma) = \dim_\mathrm{H}(\partial \mathbb{H}^n)  = n-1$.



An invariant random subgroup of a locally compact group $G$ is a conjugation invariant Borel probability measure on the Chabauty space of its closed subgroups. Lattices as well as  normal subgroups of  lattices provide examples of invariant random subgroups. 

\begin{theorem}
\label{thm:main theorem}
Let $G$ be a closed non-elementary subgroup of $\mathrm{Isom}(X)$ acting cocompactly on $X$ and admitting a uniform lattice. 
Let $\mu$ be an  invariant random subgroup of $G$ such that $\mu$-almost every closed subgroup of $G$ is discrete and infinite. Then
\begin{enumerate}
\item $ \delta(H) > \frac{1}{2}\dim_\mathrm{H}(\partial X)$, and 
\item  if $H$ has divergence type then    $\delta(H) = \dim_\mathrm{H}(\partial X)$ 
\end{enumerate}
for $\mu$-almost every closed subgroup $H$.
\end{theorem}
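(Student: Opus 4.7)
My approach is to apply the Bowen--Nevo pointwise ergodic theorem for measure-preserving actions of word-hyperbolic groups to the conjugation action of a uniform lattice $\Gamma_0 \leq G$ on the probability measure space $(\Sub{G}, \mu)$, in order to convert an asymptotic $\Gamma_0$-orbit count into a constraint on how many elements of $H$ can be detected in a fixed compact set. Write $d = \dim_{\mathrm{H}}(\partial X)$. Such a $\Gamma_0$ exists by hypothesis, is word-hyperbolic because it acts cocompactly on the proper hyperbolic space $X$, and satisfies $\#\{\gamma \in \Gamma_0 : d_X(o,\gamma o) \leq R\} \asymp e^{dR}$ with critical exponent $d$.

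Concretely, I would choose a nonnegative, continuous, compactly supported function $\psi$ on $G$ supported near the identity and set $F(H) := \sum_{h \in H}\psi(h)$. Since $\mu$-almost every $H$ is discrete, $F$ is $\mu$-a.e.\ finite, and a standard IRS mass-transport identity shows $F\in L^1(\mu)$. The Bowen--Nevo theorem applied to the conjugation action of $\Gamma_0$ then yields, for $\mu$-a.e.\ $H$,
\[
\frac{1}{\#\{\gamma \in \Gamma_0 : d_X(o,\gamma o)\leq R\}} \sum_{\gamma \in \Gamma_0,\; d_X(o,\gamma o)\leq R} F(\gamma H \gamma^{-1}) \xrightarrow[R\to\infty]{} \int F\, d\mu > 0.
\]

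Next I would interpret the double sum geometrically. The identity $d_X(o,\gamma h \gamma^{-1} o) = d_X(\gamma^{-1} o, h\gamma^{-1} o)$ recasts conjugation by $\gamma$ as a shift of basepoint from $o$ to $\gamma^{-1} o$, so the above numerator counts pairs $(\gamma, h)\in \Gamma_0\times H$ with $\gamma^{-1} o \in B_X(o,R)$ for which $h$ displaces $\gamma^{-1} o$ by a bounded amount. For a loxodromic $h\in H$, those contributing basepoints $\gamma^{-1} o$ are confined to a bounded tubular neighborhood of the axis of $h$, so by cocompactness of $\Gamma_0$ their number is at most a constant times $(R-\ell_h)_+$ where $\ell_h = d_X(o, \mathrm{Ax}(h))$. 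Using the standard asymptotic $d_X(o,ho) = 2\ell_h + \tau(h) + O(1)$ and a layer-cake argument in $\ell_h$, one obtains
\[
\sum_{h \in H} (R - \ell_h)_+ \lesssim e^{2(\delta(H) + \varepsilon) R}
\]
for every $\varepsilon>0$. Combined with the Bowen--Nevo lower bound of order $e^{dR}$ on the same quantity, this forces $\delta(H) \geq d/2$.

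The main obstacle I foresee is upgrading $\geq d/2$ to a \emph{strict} inequality in part (1), since the two bounds match exactly at the borderline $\delta(H) = d/2$. Ruling out the critical case will likely require the quantitative maximal inequality component of Bowen--Nevo together with a finer accounting of constants and subexponential factors hidden in $\asymp$, and is where I would expect the core technical work to lie. For part (2), the divergence hypothesis supplies a matching \emph{lower} bound on $\#\{h\in H : d_X(o,ho)\leq R\}$ via the Patterson--Sullivan construction for $H$ and the shadow lemma at the critical exponent $\delta(H)$; substituting this into the geometric analysis above produces a two-sided inequality that forces $\delta(H)\geq d$, and the reverse inequality $\delta(H)\leq d$ is automatic because $H \leq G$ and $G$ has critical exponent $d$, yielding equality.
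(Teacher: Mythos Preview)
Your overall shape for part~(1) is close to the paper's: both pass through a uniform lattice $\Gamma_0$, use a Bowen--Nevo type statement for its conjugation action on $(\Sub{G},\mu)$, and exploit the elementary inequality $d_X(o,ho)\le 2d_X(o,\gamma o)+O(1)$ whenever $\gamma h\gamma^{-1}$ lies in a fixed compact set (your axis formula $d_X(o,ho)=2\ell_h+\tau(h)+O(1)$ is the same thing). But your choice of $\psi$ ``supported near the identity'' breaks the argument. If $\psi(e)>0$ then the term $h=e$ alone already contributes $\psi(e)\cdot|A_{\Gamma_0}[0,R]|\asymp e^{dR}$ to the numerator, so the Bowen--Nevo lower bound is vacuous and tells you nothing about $\delta(H)$. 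If instead $\psi(e)=0$, then because $H$ is discrete, there is a neighborhood of $e$ meeting $H$ only in $\{e\}$, and for a fixed compact $\mathrm{supp}(\psi)$ you have no reason that $F(H)>0$ on a set of positive $\mu$-measure; establishing $\int F\,d\mu>0$ is precisely the content of the paper's Proposition~5.2, which takes $\mathrm{supp}(\psi)$ inside the \emph{open} set of hyperbolic elements and uses that a purely parabolic IRS would push forward to a $G$-invariant probability on $\partial X$. So the correct test set is a compact set of hyperbolic isometries, not a neighborhood of the identity.

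You also miss the mechanism for the \emph{strict} inequality. It is not a matter of sharpening constants in the maximal inequality. The paper first proves the stronger statement that $\mathcal{P}_H(d/2)=\infty$ $\mu$-a.s.\ (this is what the argument above actually yields once the test set is chosen correctly). Hence if $\delta(H)=d/2$ on a positive-measure set, those $H$ are of divergence type at their critical exponent, and part~(2) then forces $\delta(H)=d$, contradicting $d>0$. So strictness is a one-line bootstrap once part~(2) is in hand.

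Finally, your sketch of part~(2) is too thin. Divergence type does \emph{not} by itself give a lower growth bound $\#\{h\in H:d_X(o,ho)\le R\}\gtrsim e^{\delta(H)R}$; that would require convex-cocompactness or similar regularity. The paper instead builds a $\mu$-\emph{measurable family} of $H$-quasiconformal densities of dimension $\delta(\mu)$, defines the associated Poincar\'e quasi-cocycle $\pi_\nu(g,H)=\|\nu^H_{g^{-1}o}\|$, proves an IRS version of Sullivan's shadow lemma giving $\nu_o^H(\mathcal{S}_R(o,g^{-1}o))\asymp \pi_\nu(g,H)e^{-\delta(\mu)d_X(o,go)}$ when $gHg^{-1}$ lies in a large-measure set $Y$, and then plays the cocycle identity $\pi_\nu(\gamma,H)\pi_\nu(\gamma^{-1},\gamma H\gamma^{-1})\asymp 1$ against the covering of $\partial X$ by shadows to bound $\int_Y|E_{r,k}(H,Y)|\,d\mu$ above by $Ce^{\delta(\mu)r}$, while quantitative recurrence bounds it below by $ce^{dr}$. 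This cocycle trick is the essential idea and is absent from your outline.
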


Since $ \delta(\Gamma) = \liminf_{R \to \infty} \frac{1}{R} \ln   |\Gamma o \cap B_X(o,R)|$,  the conclusion of part (1) of Theorem \ref{thm:main theorem}  says that the orbits of  discrete invariant random subgroups of $G$ are in a certain sense "large".



Patterson constructed discrete subgroups of $\mathrm{Isom}(\mathbb{H}^n)$ having full limit sets, arbitrary small  positive critical exponents and    being of  convergence as well as of divergence type \cite{Pat}. By our main result such discrete subgroups cannot be assigned positive measure by any invariant random subgroup of $\mathrm{Isom}(\mathbb{H}^n)$.

In particular  Theorem \ref{thm:main theorem} applies to the action of a word hyperbolic group $G$ on its Cayley graph. This viewpoint is useful and gives a sharper result even if $G$ happens to be   a uniform  lattice in a rank one simple linear group, since in general the exact value of the critical exponent  changes under a quasi-isometry.

If $G$ is a rank one simple linear group over a local field then it admits a uniform lattice and any non-atomic invariant random subgroup of $G$ is discrete and infinite  \cite{7S, GL}, so that these assumptions of Theorem \ref{thm:main theorem}  are  satisfied automatically. 
We obtain the following characterization of invariant random subgroups of divergence type.

\begin{cor}
\label{cor:IRS of divergence type iff geodesic flow is ergodic}
Let $G$ be a rank-one simple Lie group and   $\mu$ be a non-atomic  invariant random subgroup of $G$. Then  $\mu$-almost every discrete subgroup $\Gamma$ of $G$ has divergence type if and only if the geodesic flow on $\Gamma\backslash G$ is  ergodic with respect to the Haar measure class. 
\end{cor}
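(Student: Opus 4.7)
The plan is to deduce this corollary by combining part (2) of Theorem~\ref{thm:main theorem} with the classical Hopf--Tsuji--Sullivan dichotomy for rank-one simple Lie groups. Write $X = G/K$ for the associated rank-one symmetric space and $d = \dim_\mathrm{H}(\partial X)$ for the Hausdorff dimension of its boundary, which coincides with the exponential volume growth of $X$. Since $G$ is a rank-one simple Lie group and $\mu$ is non-atomic, the results \cite{7S, GL} recalled in the introduction guarantee that $\mu$-almost every subgroup of $G$ is discrete and infinite, so Theorem~\ref{thm:main theorem} is available.

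Next I would invoke the Hopf--Tsuji--Sullivan dichotomy in this setting: for a discrete subgroup $\Gamma < G$, the geodesic flow on $\Gamma\backslash G$ is ergodic with respect to the Haar measure class if and only if the Poincar\'e series $\sum_{\gamma \in \Gamma} e^{-d\, d_X(o,\gamma o)}$ diverges at the exponent $s=d$. I would apply this in both directions. For the forward implication, if $\mu$-almost every $\Gamma$ is of divergence type, Theorem~\ref{thm:main theorem}(2) yields $\delta(\Gamma)=d$ almost surely, so the Poincar\'e series at $s=d$ is the series at the critical exponent and diverges; hence the geodesic flow on $\Gamma\backslash G$ is Haar-ergodic. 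For the converse, Haar-ergodicity of the geodesic flow forces the Poincar\'e series to diverge at $s=d$, and since $\delta(\Gamma) \le d$ always holds (as $G$ acts cocompactly on $X$, so any discrete subgroup grows no faster than a uniform lattice), this upgrades to $\delta(\Gamma) = d$ together with divergence type at the critical exponent.

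Since the substantive content is already packaged in Theorem~\ref{thm:main theorem} and in the Hopf--Tsuji--Sullivan dichotomy, I do not expect a genuine obstacle. The role of part (2) of Theorem~\ref{thm:main theorem} is precisely to supply the identity $\delta(\Gamma) = d$ that bridges the a priori weaker notion ``divergence type at the critical exponent $\delta(\Gamma)$'' with the stronger condition ``divergence of the Poincar\'e series at the Hausdorff exponent $d$'' which is what the Hopf--Tsuji--Sullivan dichotomy requires.
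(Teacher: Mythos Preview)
Your proposal is correct and follows essentially the same route as the paper: both invoke the Hopf--Tsuji--Sullivan dichotomy (the paper cites Sullivan and Roblin for the equivalence between ergodicity of the $\Gamma$-action on $\partial^2 X$ in the Lebesgue class and divergence of $\mathcal{P}_\Gamma(\dim_\mathrm{H}(\partial X))$, together with the equivalence between this and Haar-ergodicity of the geodesic flow), use this directly for the implication ``ergodic $\Rightarrow$ divergence type'', and apply part (2) of Theorem~\ref{thm:main theorem} to supply $\delta(\Gamma)=\dim_\mathrm{H}(\partial X)$ for the converse.
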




\subsection*{Kesten's theorem for invariant random subgroups of  rank-one simple Lie groups} 
Consider the  case where $G$ is a rank-one simple Lie group and  $X$ is the associated symmetric space.  Let $\Gamma$ be a discrete torsion-free subgroup of $G$ so that the quotient $\Gamma \backslash X$   is   a locally symmetric Riemannian manifold. 

Let $\lambda_0(\Gamma \backslash X)$ denote the bottom of the spectrum of the Laplace--Beltrami operator on $\Gamma \backslash X$. The two quantities  $\lambda_0(\Gamma \backslash X)$ and $\delta(\Gamma)$ stand in the following quadratic relation \cite{elstrodt1973resolvente, elstrodt1973resolvente2, elstrodt1974resolvente3, patterson1976limit, sullivan1987related, leuzinger2004critical}
$$
\text{$\lambda_0(\Gamma \backslash X) = \begin{cases} 
 \frac{1}{4}d(X) ^2 & \text{if  $ 0 \le \delta(\Gamma) \le  \frac{1}{2}d(X)$} \\ 
\delta(\Gamma)(d(X) - \delta(\Gamma)) & \text{if  $ \frac{1}{2}d(X) \le \delta(\Gamma) \le  d(X)$} \end{cases}$  where $d(X) = \dim_\mathrm{H}(\partial X)$.}
$$

For instance  $\lambda_0(X) = \frac{1}{4}\dim_\mathrm{H}(\partial X)^2$. Moreover $\Gamma$ has critical exponent  equal to $\dim_\mathrm{H}(\partial X)$ if and only if $\Gamma$ is co-amenable.

\begin{cor}
\label{cor:Kestens theorem for IRS in surfaces}
Let $G$ be a rank-one simple Lie group and $\mu$ be a non-atomic    invariant random subgroup   of $G$ so that $\mu$-almost every subgroup is torsion-free. Then 
$\lambda_0(\Gamma \backslash X) < \lambda_0(X)$ holds $\mu$-almost surely.
\end{cor}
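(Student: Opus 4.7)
The plan is to deduce this as a direct consequence of part (1) of Theorem \ref{thm:main theorem} together with the Sullivan--Elstrodt--Patterson quadratic relation between $\delta(\Gamma)$ and $\lambda_0(\Gamma\backslash X)$ recalled in the paragraph preceding the corollary. The essential observation is that the function $\delta \mapsto \delta(d(X)-\delta)$ attains its maximum $\tfrac{1}{4}d(X)^2$ uniquely at $\delta = \tfrac{1}{2}d(X)$, so any strict lower bound of the form $\delta(\Gamma) > \tfrac{1}{2}d(X)$ immediately translates into the strict upper bound $\lambda_0(\Gamma\backslash X) < \lambda_0(X)$.

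First I would verify that the hypotheses of Theorem \ref{thm:main theorem} are in force. A rank-one simple Lie group $G$ acts transitively, hence cocompactly, by isometries on its associated rank-one symmetric space $X$; it is non-elementary and admits a uniform lattice by classical constructions. As already noted in the paragraph preceding Corollary \ref{cor:IRS of divergence type iff geodesic flow is ergodic}, the results of \cite{7S, GL} guarantee that any non-atomic invariant random subgroup of such a $G$ is supported on discrete infinite subgroups. Part (1) of Theorem \ref{thm:main theorem} therefore delivers a $\mu$-conull set of discrete subgroups $\Gamma$ satisfying $\delta(\Gamma) > \tfrac{1}{2}d(X)$. Intersecting with the $\mu$-conull set of torsion-free subgroups (by hypothesis) yields a $\mu$-conull set on which $\Gamma\backslash X$ is a smooth Riemannian manifold and $\lambda_0(\Gamma\backslash X)$ is well defined.

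Finally, for any such $\Gamma$ we have $\tfrac{1}{2}d(X) < \delta(\Gamma) \le d(X)$, and the quoted identity gives
\[
\lambda_0(\Gamma\backslash X) \;=\; \delta(\Gamma)\bigl(d(X)-\delta(\Gamma)\bigr) \;=\; \tfrac{1}{4}d(X)^2 \;-\; \bigl(\delta(\Gamma) - \tfrac{1}{2}d(X)\bigr)^2 \;<\; \tfrac{1}{4}d(X)^2 \;=\; \lambda_0(X),
\]
the strict inequality being forced by $\delta(\Gamma) \neq \tfrac{1}{2}d(X)$. I do not anticipate a genuine obstacle: this corollary is a formal consequence of Theorem \ref{thm:main theorem}(1) and a completed square, so the entire content sits inside the main theorem. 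The only routine verifications are that the hypotheses of Theorem \ref{thm:main theorem} transfer to the rank-one Lie group setting and that torsion-freeness of $\Gamma$ is exactly what is needed for $\lambda_0(\Gamma\backslash X)$ to make sense as the bottom of the spectrum on a manifold.
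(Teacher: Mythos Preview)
Your proposal is correct and follows essentially the same route as the paper: invoke \cite{7S, GL} to ensure $\mu$-almost every subgroup is discrete and infinite, apply part (1) of Theorem \ref{thm:main theorem} to get $\delta(\Gamma) > \tfrac{1}{2}d(X)$, and then read off the strict inequality from the quadratic Elstrodt--Patterson--Sullivan relation. The paper's own proof is terser but identical in substance; your explicit completed-square computation simply unpacks what the paper calls ``the precise relationship between the two quantities.''
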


We know from Theorem \ref{thm:main theorem}  that $\mu$-almost every  subgroup is non-elementary and is in particular non-amenable. Therefore our Corollary  \ref{cor:Kestens theorem for IRS in surfaces}  is  a natural generalization of the main result of Ab\'{e}rt, Glasner and Vir\'{a}g \cite{abert2014kesten} to  invariant random subgroups of  rank one simple Lie groups  with respect to the geometric Laplacian.

It is known that higher-rank symmetric spaces  satisfy Kesten's theorem for irreducible invariant random subgroups in a much stronger sense.   Indeed, every non-atomic irreducible invariant random subgroup $\mu$ in a higher-rank Lie group   is co-amenable \cite{SZ, 7S, HT} and so $\lambda_0(\Gamma \backslash X) = 0$ holds $\mu$-almost surely. Therefore  our Corollary \ref{cor:Kestens theorem for IRS in surfaces} completes the picture for all semisimple Lie groups.




\subsection*{Hyperbolic spaces with Kazhdan isometry groups}

Whenever $\mathrm{Isom}(X)$ has  Kazhdan's property $(T)$ our main result admits an interesting application, which can be regarded as a certain  conditional rank one analogue of the celebrated theorem of Stuck and Zimmer \cite{SZ}.


\begin{cor}
\label{cor:divergence type with T}
If $G$ is a  rank-one simple Lie group with Kazhdan's property (T) then any non-atomic ergodic invariant random subgroup of $G$   of divergence type   is essentially a lattice. 

Similarly, if  $G$ has Kazhdan's property (T) and is either a hyperbolic group acting on its Cayley graph or a uniform lattice in $\mathrm{Isom}(X)$ for some  $\mathrm{CAT}(-1)$-space   $X$ then any   invariant random subgroup of $G$ is supported  on finite index subgroups.
\end{cor}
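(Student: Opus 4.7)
The plan is to chain Theorem~\ref{thm:main theorem} with the Patterson--Sullivan equivalence ``$\delta(H) = d \iff H$ is co-amenable in $G$'' (where $d = \dim_{\mathrm{H}}(\partial X)$), and to conclude via property~(T).

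For the first statement, I would start from a non-atomic ergodic IRS $\mu$ of divergence type. The cited result of \cite{7S, GL} makes $\mu$-almost every subgroup discrete and infinite, so Theorem~\ref{thm:main theorem}(2) yields $\delta(H) = d$ for $\mu$-a.e.\ $H$. The dichotomy recalled just before Corollary~\ref{cor:Kestens theorem for IRS in surfaces} then identifies such $H$ as co-amenable in $G$; equivalently, the quasi-regular representation on $L^2(G/H)$ admits almost $G$-invariant unit vectors. Property~(T) upgrades these to an honest nonzero $G$-invariant vector, necessarily a constant function. A nonzero constant is square-integrable on $G/H$ precisely when the $G$-invariant measure there is finite, i.e.\ when $H$ is a lattice in $G$. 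Hence $\mu$-a.e.\ $H$ is a lattice, which is the meaning of ``$\mu$ is essentially a lattice''.

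The second statement follows the same three-step template in the discrete setting: Theorem~\ref{thm:main theorem} produces subgroups of maximal critical exponent $\mu$-almost surely, the Patterson--Sullivan / Brooks dichotomy in the Gromov-hyperbolic or CAT$(-1)$ setting (due to Roblin) identifies these as co-amenable in $G$, and property~(T) applied to the quasi-regular representation on $\ell^2(G/H)$ forces $G/H$ to be finite, i.e.\ $H$ to have finite index in $G$.

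The most delicate ingredient is the dichotomy ``$\delta(H) = d \iff H$ is co-amenable in $G$'': it is classical for rank-one Lie groups (Patterson, Sullivan, Corlette) but requires Roblin's theory in the CAT$(-1)$ and Gromov-hyperbolic generality invoked in the second statement. The Kazhdan step itself is then entirely formal, applied pointwise over $\mu$-a.e.\ $H$ and requiring no direct-integral machinery.
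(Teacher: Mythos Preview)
Your argument is correct in outline but packages the Kazhdan step differently from the paper. After invoking Theorem~\ref{thm:main theorem}(2) to obtain $\delta(H)=d$ $\mu$-almost surely, the paper quotes the critical-exponent gap theorem of Corlette~\cite{Cor} and its extension by Coulon--Dal'bo--Sambusetti~\cite{CDS} as a black box: under property~(T) there exists $\delta_c>0$ such that every discrete subgroup with $\delta(H)>d-\delta_c$ is already a lattice (resp.\ of finite index). You instead pass through the intermediate equivalence ``$\delta(H)=d \Leftrightarrow H$ is co-amenable'' and then apply property~(T) to the quasi-regular representation by hand. This is precisely how the gap theorems of \cite{Cor,CDS} are proved, so your route is more self-contained but not logically independent; in particular, the co-amenability equivalence in the Gromov-hyperbolic generality needed for the second clause is the main theorem of~\cite{CDS} rather than a consequence of Roblin's work, so you end up invoking the same source as the paper in any case. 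One step you omit: the paper closes the first assertion by citing~\cite{SZ,HT} to upgrade ``$\mu$-a.e.\ $H$ is a lattice'' to ``$\mu$ is the IRS attached to a single conjugacy class of lattices'', which is what ``essentially a lattice'' means here.

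There is a slip in your treatment of the second clause: you assert that Theorem~\ref{thm:main theorem} yields maximal critical exponent $\mu$-almost surely, but part~(2) requires the divergence-type hypothesis, which that clause does not state, and part~(1) gives only $\delta(H)>d/2$. The paper's own proof carries the same restriction (it opens with ``let $\mu$ be a discrete invariant random subgroup of $G$ of divergence type''), so the second clause evidently has an implicit divergence-type hypothesis; you should make that explicit rather than claim the conclusion for arbitrary~$\mu$.
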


Corollary \ref{cor:divergence type with T} follows directly from part (2) of Theorem \ref{thm:main theorem} together with Corlette's  work \cite{Cor} on discrete subgroups of rank one Lie groups with property $(T)$  and its generalization  by Coulon--Dal'bo--Sambusetti \cite{CDS}. Indeed, if $G$ has property $(T)$ then there exists a certain threshold value $0 < \delta_c < \dim_\textrm{H}(\partial X)$ such that every discrete subgroup $\Gamma$ with $\delta(\Gamma) > \delta_c$ must be a lattice.





\subsection*{Stabilizers of probability measure preserving actions}

Invariant random subgroups can be used towards the  study of   probability measure preserving actions by considering   stabilizers of   random points. 

\begin{cor}
\label{cor:application to pmp actions}
Let $G$ be a closed non-elementary  subgroup of $\mathrm{Isom}(X)$ acting cocompactly on $X$ and admitting a uniform lattice. Let $(Z,\mu)$ be a Borel space with a probability measure preserving action of $G$.  If the stabilizer $G_z$ of $\mu$-almost every point $z \in Z$ is discrete and infinite then $\mu$-almost surely $\delta(G_z)  > \frac{\dim_\mathrm{H}(\partial X)}{2} $.
\end{cor}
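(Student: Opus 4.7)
The plan is to reduce Corollary~\ref{cor:application to pmp actions} to part (1) of Theorem~\ref{thm:main theorem} via the standard stabilizer-to-invariant-random-subgroup construction.

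First I would consider the stabilizer map
\[
\mathrm{Stab} : Z \to \Sub{G}, \qquad z \mapsto G_z = \{g \in G : gz = z\},
\]
which takes values in the Chabauty space of closed subgroups of $G$. Since $G$ is a locally compact second countable group and $Z$ is a standard Borel $G$-space, this map is Borel measurable; this is a standard fact about Polish group actions. The crucial property is the equivariance relation $G_{gz} = g G_z g^{-1}$, which says that $\mathrm{Stab}$ intertwines the $G$-action on $Z$ with the conjugation action of $G$ on $\Sub{G}$.

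Next I would push the measure $\mu$ forward under the stabilizer map to obtain
\[
\nu := \mathrm{Stab}_*\mu \in \mathrm{Prob}(\Sub{G}).
\]
By the $G$-invariance of $\mu$ together with the equivariance of $\mathrm{Stab}$, the measure $\nu$ is conjugation invariant, so it is an invariant random subgroup of $G$. The hypothesis that $G_z$ is discrete and infinite for $\mu$-almost every $z \in Z$ translates directly into the statement that $\nu$-almost every closed subgroup is discrete and infinite, which is exactly the hypothesis needed for Theorem~\ref{thm:main theorem}.

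Applying part (1) of Theorem~\ref{thm:main theorem} to $\nu$ yields
\[
\delta(H) > \tfrac{1}{2}\dim_\mathrm{H}(\partial X)
\]
for $\nu$-almost every $H \in \Sub{G}$. Pulling back through $\mathrm{Stab}$, this means that the set
\[
E = \bigl\{ z \in Z : \delta(G_z) > \tfrac{1}{2}\dim_\mathrm{H}(\partial X) \bigr\} = \mathrm{Stab}^{-1}\bigl(\{ H \in \Sub{G} : \delta(H) > \tfrac{1}{2}\dim_\mathrm{H}(\partial X)\}\bigr)
\]
has full $\mu$-measure, which is the desired conclusion. The argument has no real obstacle; the only point worth care is the Borel measurability of $\mathrm{Stab}$, but this is classical and needs only be cited rather than proved.
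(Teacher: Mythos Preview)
Your proof is correct and follows essentially the same approach as the paper: push forward $\mu$ via the stabilizer map to obtain an invariant random subgroup, then apply part (1) of Theorem~\ref{thm:main theorem}. Your write-up is in fact more careful than the paper's, since you spell out the equivariance relation, the measurability of the stabilizer map, and the pullback step explicitly.
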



By inducing invariant random subgroups from a lattice we obtain the following.

\begin{cor}
\label{cor:application to pmp actions of discrete groups}
Assume that $\mathrm{Isom}(X)$ is non-elementary and admits a uniform lattice. Let $\Gamma$ be any lattice in $\mathrm{Isom}(X)$ and   $(Z,\mu)$ be any Borel space with a probability measure preserving action of $\Gamma$.  Then  the stabilizer $\Gamma_z$ of $\mu$-almost every point $z \in Z$ is either finite or satisfies $ \delta(\Gamma_z) > \frac{\dim_\mathrm{H}(\partial X)}{2}$.
\end{cor}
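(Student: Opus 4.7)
The plan is to induce the given probability measure preserving $\Gamma$-action up to $G = \mathrm{Isom}(X)$ and invoke Corollary \ref{cor:application to pmp actions}. Let $Z_\infty = \{z \in Z : |\Gamma_z| = \infty\}$; this is a $\Gamma$-invariant Borel subset of $Z$ because the stabilizer map into the Chabauty space of $\Gamma$ is Borel. On the complement $Z\setminus Z_\infty$ the conclusion holds trivially, so after restricting to $Z_\infty$ and renormalizing we may assume that $\Gamma_z$ is infinite for $\mu$-almost every $z\in Z$.

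Choose a Borel fundamental domain $F\subset G$ for $\Gamma$; because $\Gamma$ is a lattice, $m_G(F)<\infty$. Form the induced $G$-space $Z' = (G\times Z)/\Gamma$, where $\Gamma$ acts diagonally by $\gamma\cdot(g,z) = (g\gamma^{-1},\gamma z)$, and equip it with the probability measure $\mu'$ obtained by transporting $m_G(F)^{-1}\bigl(m_G|_F\times\mu\bigr)$ to $Z'$. Left translation of $G$ on the first coordinate descends to a measure preserving action on $(Z',\mu')$, and a direct check shows that the $G$-stabilizer of the class $[g,z]$ equals $g\Gamma_z g^{-1}$. In particular, for $\mu'$-almost every $[g,z]$ the stabilizer is discrete (being a conjugate of a subgroup of the discrete group $\Gamma$) and infinite.

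By assumption $G = \mathrm{Isom}(X)$ is non-elementary and admits a uniform lattice, so it acts cocompactly on $X$. Corollary \ref{cor:application to pmp actions} therefore applies to the $G$-action on $(Z',\mu')$ and yields $\delta(g\Gamma_z g^{-1}) > \tfrac{1}{2}\dim_\mathrm{H}(\partial X)$ for $\mu'$-almost every $[g,z]\in Z'$. Since the critical exponent is invariant under conjugation this reads $\delta(\Gamma_z) > \tfrac{1}{2}\dim_\mathrm{H}(\partial X)$, and a Fubini argument on $F\times Z$ transfers the same conclusion to $\mu$-almost every $z\in Z$.

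The only real obstacle is bookkeeping: one must verify that the diagonal $\Gamma$-action on $G\times Z$ admits a reasonable Borel fundamental domain, that the identity $\mathrm{Stab}_G([g,z]) = g\Gamma_z g^{-1}$ follows cleanly from the definitions, and that the Fubini step goes through even when $\Gamma$ is non-uniform, so that $F$ has finite but not necessarily precompact Haar measure. All of this is standard for the classical induction of pmp actions from a lattice to its ambient locally compact group.
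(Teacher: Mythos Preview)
Your proof is correct and follows essentially the same approach as the paper's own proof: induce the $\Gamma$-action to $G=\mathrm{Isom}(X)$, identify the stabilizers of the induced action as conjugates $g\Gamma_z g^{-1}$, and apply Corollary~\ref{cor:application to pmp actions}. Your version is in fact more detailed than the paper's, which simply cites Zimmer for the induction and leaves the finite/infinite dichotomy and the Fubini step implicit; your explicit restriction to $Z_\infty$ before inducing is a clean way to meet the ``discrete and infinite'' hypothesis of Corollary~\ref{cor:application to pmp actions}.
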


We emphasize that  the lattice $\Gamma$ considered in Corollary \ref{cor:application to pmp actions of discrete groups} need not be uniform. 
For example, as an abstract group $\Gamma$ can be  a finitely generated free group or the fundamental  group of any finite volume hyperbolic manifold.

\subsection*{A few words about our proofs}

Let $\Gamma $ be a uniform lattice in $\mathrm{Isom}(X)$ with critical exponent $\delta(\Gamma)$. Consider some fixed hyperbolic element $h \in \mathrm{Isom}(X)$. Our proof relies on the  essentially elementary  observation that a partial Poincare series evaluated over the orbit of $h$ under conjugation by elements of the lattice $\Gamma$ behaves like a Poincare series of exponent  $2\delta$. This strategy relies on  hyperbolicity.  Of course, unless $h$ belongs to a  normal subgroup, there is no reason for  these conjugates to be contained in any particular discrete subgroup. This  complicates our proofs considerably. To overcome this problem we use a  maximal ergodic theorem for hyperbolic groups, due to Bowen--Nevo \cite{Bowen-Nevo-negcurved}, to obtain a quantitative estimate for the recurrence of the invariant random subgroup to a neighborhood of the orbit  in question. Finally we use quasiconformal densities and  Patterson--Sullivan theory to deduce sharper results for divergence type groups. The paper includes an appendix on   measurability issues of the above notions in the  Chabauty topology.

\subsection*{Previous work}

A normal subgroup of a lattice is a very special kind of an invariant random subgroup. Our work extends  results known in that setting. Theorem \ref{thm:main theorem} is originally  due to  Jaerisch \cite{Jaerisch} in the case of normal subgroups of  Kleinian groups. This was later generalized by Matsuzaki--Yabuki--Jaerisch \cite{MYJ} to deal with  normal subgroups of discrete isometry groups of proper Gromov hyperbolic spaces. 
 Arzhantseva and Cashen \cite{Arzh-Cash} have recently generalized    \cite{MYJ} to   normal subgroups of finitely generated groups acting on proper geodesic spaces with a strongly contracting element. This class includes  rank-one actions on $\mathrm{CAT}(0)$ spaces as well as the mapping class group action  on the Teichm\"{u}ller space equipped with the Teichm\"{u}ller metric.

We recall  Kesten's  theorem \cite{kesten1959symmetric}  and its recent generalization by Ab\'{e}rt, Glasner and Vir\'{a}g \cite{abert2014kesten} to invariant random subgroups. Let $\Gamma$ be any finitely generated  group and $S \subset \Gamma$ be some symmetric finite generating set.  
Let $\lambda_0(\Gamma)$ denote  the bottom of the spectrum of the combinatorial Laplacian on the Cayley graph of $\Gamma$ with respect to $S$. Similarly  $\lambda_0(H\backslash \Gamma)$  is  defined   for any quotient of the Cayley graph by a  subgroup $H \le \Gamma$. Given an invariant random subgroup $\mu$ of $\Gamma$ it is shown in \cite{abert2014kesten} that  $\lambda_0(H\backslash \Gamma) < \lambda_0(\Gamma)$ if and only if the subgroup $H$ is non-amenable $\mu$-almost surely. Kesten's classical theorem is the special case where $\mu$ is supported on some normal subgroup $N$ of $\Gamma$. Our Corollary \ref{cor:Kestens theorem for IRS in surfaces} can be seen as a generalization of this, and is based  on completely different methods.

 Cannizzo  showed  that  the action of an invariant random subgroup of a countable group on the Poisson boundary of its enveloping group is almost surely  conservative  \cite[Theorem 1.2.2]{Cannizzo}. Grigorchuk, Kaimanovich and Nagnibeda  showed that if $\Gamma$ is a free group  and 
$H $ is a subgroup so that $\delta(H)<\frac{\delta(\Gamma)}{2}$ with respect to a free generating set, then the action of $H$ on the boundary of   $\Gamma$  is dissipative \cite[Theorem 4.2]{GKN}. The analogous result for discrete Fuchsian groups where the critical exponent taken with respect to the hyperbolic plane metric  is due to  Patterson \cite{Pat2} and Matsuzaki \cite{Mat}.  Combining these  results provides a different proof of a non-strict variant of part (1) of our Theorem \ref{thm:main theorem} for these particular actions of free groups and surface groups.  
 We remark that  \cite{Pat2}  uses spectral methods while the approach of \cite{GKN} is  combinatorial, so that these proofs appear to be difficult to generalize to the general Gromov hyperbolic setting.

\subsection*{Acknowledgements}

We would like to thank Mikolaj Fraczyk for pointing out the analogy with Kesten's theorem and leading us to include Corollary \ref{cor:Kestens theorem for IRS in surfaces}. We would like to thank Amos Nevo for useful remarks and suggestions concerning the  ergodic theorem for hyperbolic groups. We would  like to thank Tushar Das and David Simmons for illuminating remarks about critical exponents, especially alerting us to Patterson's construction of Kleinian groups with full limit set and arbitrarily small critical exponent.
The first author is partially supported by NSF grant DMS-1401875.

\section{The critical exponent in Gromov hyperbolic spaces}


Let $X$ be a proper geodesic Gromov hyperbolic metric space. We recall some basic notions regarding the Gromov boundary, the Poincare series and the critical exponent. 

\subsection*{The Gromov boundary}

A geodesic metric space $X$ is \emph{Gromov  hyperbolic} if all geodesic triangles with endpoints in $X$ are $\delta$-thin for some constant $\delta > 0$.
The  \emph{Gromov boundary} $\partial X$ of  a proper geodesic Gromov hyperbolic space $X$ is  the set of all geodesic rays,   with two rays of finite Hausdorff distance being identified. 

Ideal geodesic triangles with endpoints in $\partial X$ are also $\delta'$-thin for some constant $\delta' > 0$ depending on $\delta$. In particular any two geodesic lines having the same endpoints either in $X$ or in $\partial X$ are within Hausdorff distance $\delta'$ of each other \cite{Ohshika}. Denote
$$\partial^2 X = (\partial X \times \partial X) \setminus \mathrm{Diag} $$
so that $\partial ^2 X$ consists of all ordered pairs of distinct  points on the boundary of $X$. For every $(x^-, x^+) \in \partial^2 X$ there is a bi-infinite geodesic $l$ in $X$ with $l(-\infty)= x^-$ and $l(\infty) = x^+$.


%
%

A \emph{visual metric} $\rho$ on the Gromov boundary $\partial X$ with parameter $a > 0$ and basepoint $o \in X$  is any metric   satisfying
$$ k_1 a^{-d_X(o,l_{\xi_1,\xi_2}(\mathbb{R}))} \le \rho(\xi_1,\xi_2) \le k_2 a^{-d_X(o,l_{\xi_1,\xi_2}(\mathbb{R}))} $$
for some constants $ k_1, k_2 > 0$ and every pair of points $\xi_1,\xi_2 \in \partial X$, where $l_{\xi_1,\xi_2} : \mathbb{R} \to X$ is any geodesic line with end points $l_	{\xi_1,\xi_2}(-\infty) = \xi_1$ and $l_{\xi_1,\xi_2}(\infty) = \xi_2$. A visual metric $\rho_a$ on $\partial X$ exists for any value of the parameter  $a$ within the range $a'> a>1$, where $a' > 1$ is a constant depending only on $\delta$. 

The Hausdorff dimension of the metric $\rho_a$ is equal to $\ln(a) d(X)$ for some constant $d(X)$ independent of the parameter $a$ and the basepoint $ o \in X$. By a slight abuse of notation, the \emph{Hausdorff dimension} of the Gromov boundary $\partial X$ is defined as $\dim_\text{H}(\partial X) = d(X)$. It is known that $\dim_\text{H}(\partial   X)>0$ whenever $|\partial X|>2$ and that $\dim_\text{H}(\partial X) < \infty$ whenever $\textrm{Isom}(X)$ admits a uniform lattice. 

The Hausdorff dimension   of the Gromov boundary of the $n$-dimensional hyperbolic space is equal to its topological dimension $\dim_{\mathrm{top}}(\partial \mathbb{H}^n) = n-1$. We point out that in general the Hausdorff and the topological dimensions  of the Gromov boundary may be different. In fact Bonk and Kleiner \cite{BK} proved that if $X$ is a $\mathrm{CAT}(-1)$-space   not isometric to the $n$-dimensional hyperbolic space $\mathbb{H}^n$ for some $n$  then the strict inequality $d(X)>\dim_{\mathrm{top}} (\partial X)$ holds. 
%
%



%
%
%


The Gromov compactification of the metric space $X$   is denoted $\overline{X}=X \cup \partial X$. The  group of isometries $\mathrm{Isom}(X)$ is locally compact and acts continuously on $\overline{X}$. 

\subsection*{The Poincare series and the critical exponent}

Let $\Gamma$ be any discrete  subgroup of $\mathrm{Isom}(X)$. 
The \emph{Poincare series} of  $\Gamma$ at the exponent $s \ge 0 $ and with respect to the basepoint $o \in X$  is given by
$$ \mathcal{P}_\Gamma(s) = \sum_{\gamma \in \Gamma}e^{-sd_X(o,\gamma o)}. $$
The \emph{critical exponent} of $\Gamma$ is the infimum over all exponents $s$ such that $\mathcal{P}_\Gamma(s)$ converges. It is independent of the choice of basepoint. It is easy to see that
$$ \delta(\Gamma) = \liminf_{R \to \infty} \frac{1}{R} \ln |\Gamma o \cap B_X(o,R)|.$$

The discrete group $\Gamma$ is said to be of \emph{divergence type} if $\mathcal{P}_\Gamma(\delta(\Gamma))$ diverges,  and of \emph{convergence type} otherwise. If $\Gamma$ is a uniform lattice then it is necessarily of divergence type  \cite[Corollary 7.3]{Coornaert}. A discrete group of divergence type is clearly infinite.

The \emph{limit set} $\Lambda(\Gamma)$ is the set of all accumulation points in the Gromov boundary $\partial X$ of some  orbit of $\Gamma$ in $X$. The subgroup $\Gamma$ is \emph{non-elementary} if $\Lambda(\Gamma)$ has more than two points. If $\Gamma$ is non-elementary then $\delta(\Gamma)>0$.

The \emph{radial limit set} $\Lambda_{r}(\Gamma)\subset \Lambda(\Gamma)$ consists of the ideal points $\alpha \in \partial X$ such that there is a geodesic ray  converging to $\alpha$ and intersecting non-trivially infinitely many $\Gamma$-translates of the ball $B_X(o,R)$ for some $R > 0$.
It is known by the work of  Bishop-Jones \cite[Theorem 1]{BJ} in the case of Kleinian groups and of Das-Simmons-Urbanski \cite[Theorem 1.2.1]{DSU} in the case of Gromov hyperbolic spaces that
$$\delta(\Gamma)=\dim_{\textrm{H}}(\Lambda_{r}(\Gamma)).$$ 
The above immediately implies the upper bound $ \delta(\Gamma) \le \dim_\textrm{H}(\partial X)$. If  $\Gamma$ is convex-cocompact, then every limit point is radial so that
$$\dim_{\textrm{H}}\Lambda_{r}(\Gamma)=\dim_{\textrm{H}}\Lambda( \Gamma).$$
In particular, if $\Gamma$ is a uniform lattice in $X$ then $\Lambda (\Gamma) = \partial X$ and
$$\delta(\Gamma)=\dim_{\textrm{H}}\partial X.$$


\subsection*{Partial Poincare series} Let $\Gamma$ be a discrete subgroup of $\mathrm{Isom}(X)$. It  will often be  useful to consider a partial Poincare series, where the summation is performed over a subset of $\Gamma$.
 We are particularly interested in the following situation. Assume that the group $\Gamma$ is acting on a set $Z$. Given a point  $z \in Z$ and a subset $Y \subset Z$ denote 
$$ E_\Gamma(z, Y) = \{ \gamma \in \Gamma \: : \: \gamma z \in Y \}. $$

\begin{definition}
The \emph{partial Poincare series} of $\Gamma$ at the exponent $s \ge 0$ with respect to the given action of $\Gamma$ on $Z$ and the basepoint $o \in X$ is 
$$ \mathcal{P}_\Gamma(s;z,Y) = \sum_{\gamma \in E_\Gamma(z,Y)}e^{-sd_X(o,\gamma o)} $$
where $z \in Z$ is any point and $Y \subset Z$ is any subset.
\end{definition}
Clearly $\mathcal{P}_\Gamma(s;z,Y) \le \mathcal{P}_\Gamma(s)$ always holds, and we will be concerned with establishing useful estimates in the other direction. 

Two different actions of $\Gamma$ will play a role below. Namely, either $Z = \mathrm{Isom}(X)$ and $V$ is an open subset (consisting of hyperbolic elements), or $Z$ is a Borel space with a probability measure $\mu$ and $Y$ is a Borel  subset with $\mu(Y) > 0$.

\section{Hyperbolic elements in the Poincare series}

Let $X$ be a proper geodesic Gromov hyperbolic metric space and $\Gamma$  a fixed uniform lattice  in $\mathrm{Isom}(X)$. 

\subsection*{Hyperbolic elements}

An isometry $h \in \mathrm{Isom}(X)$ is \emph{hyperbolic} if 
$h$ has exactly two fixed points $h^+, h^-$ on the boundary $\partial X$ and $h$ is of infinite order.
The points $h^+$ and $h^-$ are called the 
\emph{attracting} and \emph{repelling} points of the hyperbolic element $h$, respectively. We have that
$$ \text{$h^{ n} x \to h^+$ for all $x \in \partial X \setminus \{h^-\}$ as $n \to \infty$ } $$
as well as the analogous property with $h^+$ replaced by $h^-$ and $h^{n}$ by $h^{-n}$.

Let $\mathcal{H}(X)$ denote the collection of all hyperbolic elements in $\mathrm{Isom}(X)$. The collection $\mathcal{H}(X)$ is clearly invariant under conjugation. There is a natural map
$$ \mathcal{E} : \mathcal{H}(X) \to \partial^2 X, \quad \mathcal{E}(h) = (h^-, h^+).$$
It is easy to see that the map $\mathcal{E}$ is  continuous. 
Moreover   $\mathcal{E}$ is  $\mathrm{Isom}(X)$-equivariant 
 with respect to the conjugation action on $\mathcal{H}(X)$ and the diagonal action on $\partial^2 X $ induced by the embedding $\mathrm{Isom} (X)\to \mathrm{Homeo}(\partial X)$. 
 





\begin{prop}
\label{prop:hyperbolic elements in CAT minus one groups are open}
The collection $\mathcal{H}(X)$ of hyperbolic elements is open in $\mathrm{Isom}(X)$.
\end{prop}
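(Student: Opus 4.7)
The plan is to exploit the \emph{north--south dynamics} characterizing hyperbolic isometries together with the joint continuity of the $\mathrm{Isom}(X)$-action on the compactification $\overline{X}$, which is available by the discussion preceding the proposition. Fix $h \in \mathcal{H}(X)$ with boundary fixed points $h^\pm \in \partial X$. Choose disjoint open neighborhoods $U^+, U^-$ of $h^+, h^-$ in $\overline{X}$, and smaller open neighborhoods $V^\pm$ with $\overline{V^\pm} \subset U^\pm$. The goal is to produce an open neighborhood $W$ of $h$ in $\mathrm{Isom}(X)$ consisting entirely of hyperbolic elements.

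First I would establish an effective north--south dynamics statement for $h$ itself. Since $h^n x \to h^+$ for every $x \in \overline{X} \setminus \{h^-\}$ and the set $\overline{X} \setminus V^-$ is compact, a standard open cover argument yields an integer $N \geq 1$ with
$$ h^N\bigl(\overline{X} \setminus V^-\bigr) \subset V^+ \quad\text{and}\quad h^{-N}\bigl(\overline{X} \setminus V^+\bigr) \subset V^-. $$
Next, I would use the joint continuity of the action map $\mathrm{Isom}(X) \times \overline{X} \to \overline{X}$, together with the compactness of $\overline{X} \setminus U^-$ and $\overline{X} \setminus U^+$, to find a neighborhood $W$ of $h$ such that for every $g \in W$ the analogous inclusions hold with slightly larger source sets, i.e.
$$ g^N\bigl(\overline{X} \setminus U^-\bigr) \subset V^+ \quad\text{and}\quad g^{-N}\bigl(\overline{X} \setminus U^+\bigr) \subset V^-. $$

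For $g \in W$, iterating these inclusions shows that $g^{kN}x$ converges, for every $x \in \overline{X} \setminus U^-$, to a point lying in the compact set $\overline{V^+}$; the unique limit point is then a fixed point $p^+ \in \overline{V^+}$ of $g^N$. Symmetrically $g^{-N}$ has a unique fixed point $p^- \in \overline{V^-}$, and no further fixed points of $g^N$ exist on $\overline{X}$ because they would be swept into $V^+$ or $V^-$. Since $g^{kN}o$ converges to $p^+ \in \partial X$, the orbit is unbounded, so $g^N$ has infinite order and positive stable translation length, hence $g^N$ is hyperbolic.

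The final step, which I expect to be the main technical point, is passing from $g^N$ being hyperbolic to $g$ being hyperbolic. One appeals here to the classification of isometries of a proper Gromov hyperbolic space: an isometry with positive stable translation length $\tau(g) = \lim_k d(o, g^k o)/k$ is necessarily hyperbolic. Since $\tau(g) = \tau(g^N)/N > 0$, we conclude $g \in \mathcal{H}(X)$, so $W \subset \mathcal{H}(X)$ and $\mathcal{H}(X)$ is open. The subtlety is ruling out that $g$ permutes the fixed points $p^\pm$ of $g^N$ without fixing them individually; this is precisely what the positive translation length criterion takes care of, as any isometry swapping two boundary points in a Gromov hyperbolic space has bounded orbits and hence vanishing translation length.
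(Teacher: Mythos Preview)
Your overall strategy matches the paper's: exhibit north--south dynamics for $h^{\pm N}$ as inclusion conditions on open sets, observe that these conditions are open in $\mathrm{Isom}(X)$, and conclude that any nearby $g$ is hyperbolic. The paper works on $\partial X$ alone (requiring $U^-\cup U^+\neq\partial X$ so as to have a witness point $\xi$), while you work on $\overline X$ with the basepoint $o$ playing the witness role; this is a harmless variation.

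Two steps in your execution are not justified, however. First, from $g^N(\overline X\setminus U^-)\subset V^+$ you only get that the iterates $g^{kN}x$ remain in $V^+$ for $k\ge1$; nothing forces them to \emph{converge}, nor $g^N$ to have a \emph{unique} fixed point in $\overline{V^+}$, nor such a fixed point to lie in $\partial X$ rather than in $X$. Second, and more seriously, the implication ``the orbit is unbounded, so $g^N$ has \dots positive stable translation length'' is simply false: parabolic isometries have unbounded orbits and $\tau=0$. So as written you have not excluded the possibility that $g^N$ is parabolic, which is exactly the case that needs work.

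The paper closes this gap directly from the dynamics, bypassing any fixed-point or translation-length argument. Having arranged $g^{mN}(\partial X\setminus U^-)\subset U^+$ and $g^{-mN}(\partial X\setminus U^+)\subset U^-$ for all $m\ge1$, suppose $g^N$ were parabolic with unique boundary fixed point $p$. For $\xi\in\partial X\setminus(U^-\cup U^+)$ both sequences $g^{mN}\xi$ and $g^{-mN}\xi$ must converge to $p$; but the first stays in $U^+$ and the second in $U^-$, forcing $p$ into the closures of two disjoint sets. Your proposal can be repaired along exactly these lines, after which the passage from $g^N$ to $g$ via $\tau(g)=\tau(g^N)/N>0$ is correct.
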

\begin{proof}
Let $h \in \mathrm{Isom}(X)$ be a hyperbolic isometry with attracting and repelling points
$ \mathcal{E}(h) = (h_-, h_+) \in \partial^2 X$.
The compactness of $\partial X$ combined with the contraction property of the hyperbolic isometry $h$ allows us to choose two  disjoint open neighborhoods $h^- \in U^-$ and $h^+ \in U^+$ in $\partial X$  so that $ U^- \cup U^+ \neq \partial X$ and
$$\text{$h^N(\partial X \setminus U^-) \subset U^+$ and $h^{-N}(\partial X \setminus U^+) \subset U^{-}$}$$
for some sufficiently large $N \in \mathbb{N}$.

The above condition defines an open subset of $\mathrm{Homeo}(\partial X)$  in the compact-open topology. Since the natural map $G \to \mathrm{Homeo}(\partial X)$ is continuous, 
this continues to hold for every element $g \in \mathrm{Isom}(X)$ in a sufficiently small neighborhood of $h$. 
We claim that any such $g$ must be hyperbolic as well. 

For  any point $\xi \in \partial X \setminus (U^- \cup U^+) $ the condition 
 $g^{mN} \xi \in U^+ $
holds for all $m\in\mathbb{N}$. In particular the isometry $g^N$, and therefore $g$ itself, is of infinite order. 

It remains to rule out the possibility that the isometry $g$ is parabolic. In that case   $g^N$ is clearly parabolic as well. Note that
$$
\text{$g^{mN}(\partial X \setminus U^-) \subset U^+$ and $g^{-mN}(\partial X \setminus U^+) \subset U^-$} $$ holds for all $m \in \mathbb{N}$.
Suppose $g^N$ is parabolic. Let $p\in \partial X$ be the unique fixed point of $g^N$. For any point $\xi \in \partial X$ we would have both $g^{mN}\xi\to p$ and $g^{-mN}\xi\to p$ as $m\to \infty$. This contradicts the fact that $U^-$ and $U^+$ are disjoint. We conclude that $g$ is hyperbolic.
\end{proof}

\begin{prop}
\label{prop:compact set of hyperbolic elements meets a ball}
Let  $K \subset \mathcal{H}(X) \subset \mathrm{Isom}(X)$ be a compact subset. Then for any point  $o \in X$ there is a radius $D = D(K, o) > 0$ such that any bi-infinite geodesic line $l$ in $X$ with $l(-\infty) = h^-$ and $l(\infty) = h^+$   where $(h^-, h^+) = \mathcal{E}(h)$ for some  $h \in K$ satisfies  $l \cap B_{X}(o,D) \neq \emptyset$. 
\end{prop}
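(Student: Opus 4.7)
The plan is to reduce the claim to a compactness argument on $\partial^2 X$ combined with the visual metric description of the Gromov boundary.

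First I would invoke the continuity of the map $\mathcal{E}: \mathcal{H}(X) \to \partial^2 X$, noted just above the statement, to conclude that the image $\mathcal{E}(K)$ is a compact subset of $\partial^2 X = (\partial X \times \partial X) \setminus \mathrm{Diag}$. Fixing a visual metric $\rho = \rho_a$ on $\partial X$ based at $o$, with associated constants $k_1, k_2 > 0$, the function $\rho$ is continuous on $\partial X \times \partial X$ and vanishes precisely on $\mathrm{Diag}$. Hence compactness of $\mathcal{E}(K)$ and its disjointness from the diagonal will produce a constant $\epsilon_0 > 0$ such that $\rho(h^-, h^+) \geq \epsilon_0$ for every $h \in K$.

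Next I would apply the upper bound in the defining inequality of a visual metric: for any bi-infinite geodesic $l$ with $l(-\infty) = h^-$ and $l(\infty) = h^+$, one has $\rho(h^-, h^+) \leq k_2 a^{-d_X(o, l(\mathbb{R}))}$. Combining this with the lower bound $\rho(h^-, h^+) \geq \epsilon_0$ yields $d_X(o, l(\mathbb{R})) \leq \log_a(k_2/\epsilon_0)$ uniformly in $h \in K$ and in the choice of such a geodesic $l$. Setting $D = \log_a(k_2/\epsilon_0) + 1$ then forces $l \cap B_X(o, D) \neq \emptyset$, as required.

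I do not expect any serious obstacle; the argument is essentially a compactness argument paired with one application of the visual metric axiom. The only point worth flagging is that the bound must hold for \emph{every} bi-infinite geodesic realizing the given pair of endpoints. This is built into the visual metric axiom as stated here, but could in any case be transferred between different such geodesics using the $\delta'$-Hausdorff closeness of geodesics sharing their endpoints in $\partial X$ recorded in the preliminaries.
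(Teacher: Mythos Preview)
Your proof is correct and follows essentially the same approach as the paper: both use continuity of $\mathcal{E}$ to get a compact image in $\partial^2 X$ and then invoke the visual metric definition to bound uniformly the distance from $o$ to any geodesic with endpoints in that compact set. You have simply unpacked in detail what the paper states in a single sentence.
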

\begin{proof}
 The definition of the visual metric on $\partial X$ implies that for any compact subset $Q \subset \partial^{2}X$ there is a compact $C\subset X$ such that any bi-infinite geodesic line $l$ in $X$ with endpoints $(l(-\infty), l(\infty)) \in Q$ satisfies $l \cap C \neq \emptyset$. The result now follows from the continuity of the map $ \mathcal{E} : \mathcal{H}(X) \to \partial^2 X$.
\end{proof}

\subsection*{Partial Poincare series over conjugates of a hyperbolic element}

Recall that given any element $g \in \mathrm{Isom}(X)$ and subset $U \subset \mathrm{Isom}(X) $ we denote
$$E_\Gamma(g, U) = \{ \gamma \in \Gamma \: : \: \gamma g \gamma^{-1} \in U \}$$ and consider the associated partial Poincare series 
$$\mathcal{P}_\Gamma(s;g,U)=\sum_{\gamma \in E_\Gamma(g,U)}e^{-sd_{X}(\gamma o,o)}.$$

\begin{lemma}
\label{lemma:introducing one half}
Let $h\in \mathcal{H}(X)$ be a hyperbolic isometry and $K \subset \mathcal{H}(X)$ be a compact subset. Then for every exponent $s $ there is a constant $\beta = \beta(K,o,s) > 0$  so that
$$ e^{-s d_X(\gamma o, o)} \le \beta  e^{-\frac{s}{2} d_X(h o, o)} $$
for all elements $\gamma \in E_\Gamma(h, K)$. 
\end{lemma}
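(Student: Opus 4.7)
My plan is to reduce the asserted inequality to the geometric statement
$$d_X(\gamma o, o) \ge \tfrac{1}{2} d_X(ho, o) - C$$
for some constant $C = C(K, o)$ independent of $\gamma \in E_\Gamma(h, K)$, after which the lemma follows by exponentiating and taking $\beta = e^{sC}$. The algebraic heart of the argument is the conjugation identity. Writing $k = \gamma h \gamma^{-1}$, which lies in $K$ by hypothesis, one has $k(\gamma o) = \gamma h o$, and since $\gamma$ is an isometry this yields
$$d_X(\gamma o, k \gamma o) = d_X(o, ho).$$
Thus $\gamma o$ is displaced by the element $k \in K$ by exactly the fixed quantity $d_X(o, ho)$, independently of $\gamma$. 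This is precisely the mechanism by which conjugation transfers a large displacement into a large orbit, and the factor of one-half ultimately arises because $\gamma o$ appears on both sides of the displacement.

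I would next combine two geometric inputs. First, Proposition~\ref{prop:compact set of hyperbolic elements meets a ball} applied to $K$ furnishes a radius $D = D(K, o)$ such that every bi-infinite geodesic $A_k$ with endpoints $\mathcal{E}(k) = (k^-, k^+)$ for some $k \in K$ meets $B_X(o, D)$; in particular, for such $A_k$ the triangle inequality gives $d_X(\gamma o, A_k) \le d_X(\gamma o, o) + D$. Second, the standard Gromov-hyperbolic displacement estimate: for a hyperbolic isometry $k$ with translation length $\tau(k)$, a geodesic axis $A_k$ from $k^-$ to $k^+$, and any $x \in X$,
$$d_X(x, kx) \le 2\, d_X(x, A_k) + \tau(k) + C_\delta,$$
where $C_\delta$ depends only on the hyperbolicity constant of $X$. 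This follows from the triangle inequality along $x \to x^* \to k x^* \to k x$ for $x^* \in A_k$ nearly closest to $x$, together with the fact that the displacement of any point on $A_k$ differs from $\tau(k)$ by at most a $\delta$-controlled constant. Applying this estimate at $x = \gamma o$ and inserting the conjugation identity yields
$$d_X(o, ho) \le 2\, d_X(\gamma o, o) + 2D + \tau(k) + C_\delta.$$
Since $K$ is compact and the map $k \mapsto d_X(o, ko)$ is continuous, $\tau(k) \le d_X(o, ko)$ is uniformly bounded on $K$ by some $T = T(K, o) < \infty$. Solving for $d_X(\gamma o, o)$ gives the desired inequality with $C = D + \tfrac{1}{2}(T + C_\delta)$.

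The main obstacle is the displacement estimate in the $\delta$-hyperbolic setting. In a $\mathrm{CAT}(-1)$ space it is essentially immediate because $k$ genuinely translates $A_k$ by $\tau(k)$, but in a general proper geodesic Gromov hyperbolic space one has to absorb an $O(\delta)$ error arising from the fact that $k$ only coarsely translates $A_k$; this is standard but must either be cited carefully or derived from scratch. Everything else in the argument is the triangle inequality together with Proposition~\ref{prop:compact set of hyperbolic elements meets a ball}, so the real content of the lemma is the conjugation trick $k \gamma o = \gamma h o$ combined with the fact that the axis of $k \in K$ is forced close to the basepoint.
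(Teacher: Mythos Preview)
Your argument is correct, but it takes a considerably more elaborate route than the paper's. Both proofs rest on the conjugation identity you wrote down, namely $d_X(\gamma o, k\gamma o) = d_X(o, ho)$ for $k = \gamma h \gamma^{-1} \in K$. At this point, however, the paper simply applies the triangle inequality through $o$ and $ko$:
\[
d_X(o, ho) = d_X(\gamma o, k\gamma o) \le d_X(\gamma o, o) + d_X(o, ko) + d_X(ko, k\gamma o) = 2\,d_X(\gamma o, o) + d_X(ko, o),
\]
and then takes $\beta = \sup_{k \in K} e^{\frac{s}{2} d_X(ko, o)}$. No hyperbolicity, no axes, no Proposition~\ref{prop:compact set of hyperbolic elements meets a ball}, and no displacement estimate are needed; the lemma holds in any metric space and for any compact $K \subset \mathrm{Isom}(X)$, not just one consisting of hyperbolic elements.

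Your detour through the axis $A_k$ and the inequality $d_X(x, kx) \le 2\,d_X(x, A_k) + \tau(k) + C_\delta$ does recover the same bound, since $\tau(k) \le d_X(o, ko)$ and the axis passes near $o$. But this path imports genuine geometric content (the coarse translation along a quasi-axis, which you rightly flag as requiring care in the $\delta$-hyperbolic setting) to prove something that follows from the triangle inequality alone. The machinery you invoke is exactly what the paper saves for the next lemma (Lemma~\ref{lem:a bound for the shortest element in hyperbolic groups}), where it is actually needed.
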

\begin{proof}
Consider some element $\gamma \in E_\Gamma(h, K)$ so that in particular $\gamma^{-1} g \gamma = h$ for some hyperbolic isometry  $g \in K$. The triangle inequality implies that
\begin{align*}
d_X(ho, o) &= d_X(\gamma^{-1} g \gamma  o, o) \le \\
&\le d_X(\gamma^{-1} o,o) + d_X( \gamma^{-1} g o, \gamma^{-1}  o) + d_X( \gamma^{-1}  g \gamma  o,  \gamma^{-1} g o) = \\
 &= 2 d_X(\gamma o,o) + d_X(go,o) 
\end{align*}
We obtain the following estimate
$$ e^{-s d_X(\gamma o, o)} \le   e^{\frac{s}{2} d_X(go,o)} e^{-\frac{s}{2} d_X(h o, o)} $$
and the lemma follows with the constant $\beta$ being given by 
$$ \beta = \sup_{g \in K} e^{\frac{s}{2}d_X(go, o)}. $$
\end{proof}

The following   property of uniform lattices is used  in Lemma \ref{lem:a bound for the shortest element in hyperbolic groups} below.

\begin{lemma}
\label{prop:uniform lattice has discrete orbits}
Let $\Gamma$ be a uniform lattice in $\mathrm{Isom}(X)$ and $\mathrm{C}$ a compact subset of $X$. Then there is a constant $n = n(\Gamma,C) > 0$ so that
$$ |\{\gamma \in \Gamma \: : \: \gamma g_1 C \cap g_2 C \neq \emptyset \} | \le n $$
for every pair of elements $g_1, g_2 \in \mathrm{Isom}(X)$.
\end{lemma}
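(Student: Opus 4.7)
The plan is to translate the geometric incidence condition $\gamma g_{1}C\cap g_{2}C\neq\emptyset$ into a bounded-distance condition between the $\Gamma$-orbit points, and then invoke the proper discontinuity of the $\Gamma$-action on $X$, exploiting cocompactness to absorb the dependence on $g_{1},g_{2}$ into a universal constant.

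First, fix a basepoint $o\in X$ and set $R=2\sup_{c\in C}d_{X}(o,c)$, which is finite because $C$ is compact. If $\gamma g_{1}C\cap g_{2}C\neq\emptyset$ then there exist $c_{1},c_{2}\in C$ with $\gamma g_{1}c_{1}=g_{2}c_{2}$, and the triangle inequality together with the fact that $g_{1},g_{2},\gamma$ are isometries gives
$$d_{X}(\gamma g_{1}o,g_{2}o)\le d_{X}(\gamma g_{1}o,\gamma g_{1}c_{1})+d_{X}(g_{2}c_{2},g_{2}o)=d_{X}(o,c_{1})+d_{X}(o,c_{2})\le R.$$
So every $\gamma$ counted by the statement sends $g_{1}o$ into the ball $B_{X}(g_{2}o,R)$.

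Next, I would use that $\Gamma$ is a uniform lattice to choose $D=D(\Gamma,o)$ such that every point of $X$ lies within distance $D$ of the $\Gamma$-orbit of $o$; this is just the diameter of a compact fundamental domain. Pick $\gamma_{1},\gamma_{2}\in\Gamma$ with $d_{X}(g_{i}o,\gamma_{i}o)\le D$ for $i=1,2$. Applying the triangle inequality twice,
$$d_{X}\bigl(\gamma_{2}^{-1}\gamma\gamma_{1}\,o,\,o\bigr)=d_{X}(\gamma\gamma_{1}o,\gamma_{2}o)\le d_{X}(\gamma g_{1}o,g_{2}o)+2D\le R+2D.$$
Thus the element $\gamma_{2}^{-1}\gamma\gamma_{1}\in\Gamma$ maps $o$ into the fixed ball $B_{X}(o,R+2D)$ whose radius depends only on $\Gamma$, $o$, and $C$.

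Finally, since $\Gamma$ is discrete and acts properly on the proper space $X$, the set $\{\eta\in\Gamma:d_{X}(\eta o,o)\le R+2D\}$ is finite; call its cardinality $n=n(\Gamma,C)$. With $\gamma_{1},\gamma_{2}$ fixed, the map $\gamma\mapsto\gamma_{2}^{-1}\gamma\gamma_{1}$ is injective, so the set of $\gamma$ satisfying $\gamma g_{1}C\cap g_{2}C\neq\emptyset$ has at most $n$ elements. The only subtle point is ensuring that the final bound $n$ is independent of $g_{1},g_{2}$: this is precisely what cocompactness achieves, since the auxiliary elements $\gamma_{1},\gamma_{2}$ may depend on $g_{1},g_{2}$ but the ball radius $R+2D$ does not.
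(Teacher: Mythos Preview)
Your proof is correct and follows essentially the same strategy as the paper's: both use cocompactness of $\Gamma$ to write $g_i$ as $\gamma_i$ times a uniformly bounded error, reduce via the injection $\gamma\mapsto\gamma_2^{-1}\gamma\gamma_1$ to a fixed compact region, and then invoke properness of the $\Gamma$-action. The only cosmetic difference is that the paper phrases cocompactness at the level of the group (a compact $K\subset\mathrm{Isom}(X)$ with $\Gamma K=\mathrm{Isom}(X)$, enlarging $C$ to $C'=KC$), whereas you phrase it at the level of the space via a basepoint and the density radius $D$ of the orbit $\Gamma o$.
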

\begin{proof}
Since $\Gamma$ is a uniform lattice there is a compact subset $ K \subset \mathrm{Isom}(X)$ satisfying $\Gamma K = \mathrm{Isom}(X)$. Denote $C' = KC$. In particular $C'$ is compact. Given any pair of elements $g_1, g_2 \in \mathrm{Isom}(X)$ there are $\gamma_1, \gamma_2 \in \Gamma$ so that $g_i \in \gamma_i K$ for $i \in \{1,2\}$. It suffices therefore to bound the size of the set
$$ \{\gamma \in \Gamma \: : \: \gamma \gamma_1 C' \cap \gamma_2 C' \neq \emptyset \}. $$
Such an  upper bound   exists as $\Gamma$ is acting properly on $X$.
\end{proof}

\begin{lemma}
\label{lem:a bound for the shortest element in hyperbolic groups}
Let $h\in \mathcal{H}(X)$ be a hyperbolic isometry and $K \subset \mathcal{H}(X)$   a compact subset. If  $E_\Gamma(h, K) \neq \emptyset$ then there exists an element  $\gamma_h \in E_\Gamma(h, K)$ 
such that
$$ \mathcal{P}_\Gamma(s;h,K) \le \alpha  e^{- s d_X(\gamma_h o,o) } $$  
for some constant $\alpha = \alpha(K,\Gamma,o,s) > 0$ depending on the exponent $s$ but independent of $h$.
%
\end{lemma}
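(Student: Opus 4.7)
The plan is to confine the set $\{\gamma^{-1} o : \gamma \in E_\Gamma(h,K)\}$ to a tube around the axis of $h$, to parametrize the sum by arc-length along that axis, and to identify $\gamma_h$ with the element whose projection onto the axis is closest to the foot of the perpendicular from $o$. The key geometric input is Proposition \ref{prop:compact set of hyperbolic elements meets a ball}: since $\gamma h \gamma^{-1} \in K$ for every $\gamma \in E_\Gamma(h,K)$, its axis passes within a uniform distance $D = D(K,o)$ of $o$, which translates to the statement that $\gamma^{-1} o$ lies in the $D$-neighborhood of $\mathrm{axis}(h)$.

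Fix a unit-speed parametrization $l : \mathbb{R} \to X$ of $\mathrm{axis}(h)$ so that $l(0)$ realizes $D_h := d(o, \mathrm{axis}(h))$, and for each $\gamma \in E_\Gamma(h,K)$ pick $t_\gamma \in \mathbb{R}$ with $l(t_\gamma)$ a nearest-point projection of $\gamma^{-1} o$. From standard $\delta$-hyperbolic geometry (the Gromov product $(o \mid l(t))_{l(0)}$ is bounded by a multiple of the hyperbolicity constant) combined with the triangle inequality, I would obtain the two-sided estimate
\[
D_h + |t_\gamma| - C_1 \; \le \; d(o, \gamma^{-1} o) \; \le \; D_h + |t_\gamma| + C_2,
\]
where $C_1, C_2$ depend only on the hyperbolicity constant and $D$. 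Because the portion of the orbit $\{\gamma^{-1} o\}$ with $t_\gamma \in [n,n+1)$ is contained in a compact region of bounded diameter (controlled by $D$), Proposition \ref{prop:uniform lattice has discrete orbits} yields a uniform bound $N = N(\Gamma,K,o)$ on the cardinality of each such slab, independent of $n$ and of $h$.

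I would then take $\gamma_h \in E_\Gamma(h,K)$ to be an element minimizing $|t_\gamma|$; such an element exists since $E_\Gamma(h,K)$ is nonempty and the $\Gamma$-orbit of $o$ is discrete. Writing $T_0 = |t_{\gamma_h}|$, the upper estimate gives $d(o, \gamma_h o) \le D_h + T_0 + C_2$, while grouping the partial series by slabs and applying the lower estimate gives
\[
\mathcal{P}_\Gamma(s; h, K) \; \le \; e^{s C_1} e^{-s D_h} \sum_{\gamma \in E_\Gamma(h,K)} e^{-s|t_\gamma|} \; \le \; \frac{2 N e^{s(C_1+1)}}{1 - e^{-s}} \, e^{-s(D_h + T_0)}.
\]
Combining the two bounds produces the desired inequality with $\alpha = \frac{2 N e^{s(C_1 + C_2 + 1)}}{1 - e^{-s}}$, a constant depending only on $K$, $\Gamma$, $o$, and $s$.

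The principal obstacle is ensuring that the slab bound $N$ and the geometric constants $C_1, C_2$ are genuinely uniform in $h$. The former follows from the fact that the slab lives in a compact set whose diameter is controlled by $D$ alone, so a single application of Proposition \ref{prop:uniform lattice has discrete orbits} suffices regardless of $h$. The latter is a feature of $\delta$-hyperbolicity of $X$ and is insensitive to $h$. Once these uniform inputs are in place, the estimate reduces to a one-dimensional geometric series which collapses to the single term corresponding to $\gamma_h$.
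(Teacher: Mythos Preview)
Your proposal is correct and follows essentially the same route as the paper's proof: confine $\gamma^{-1}o$ to a $D$-tube around an axis of $h$ via Proposition~\ref{prop:compact set of hyperbolic elements meets a ball}, project onto the axis, use the thin-triangles estimate $d(o,\gamma^{-1}o)\approx D_h+|t_\gamma|$, bound the number of orbit points in each unit slab by Lemma~\ref{prop:uniform lattice has discrete orbits}, sum the resulting geometric series, and take $\gamma_h$ to be an element in the first nonempty slab. The paper's only cosmetic difference is that it picks $x_\gamma\in A_h$ with $\gamma x_\gamma\in B_X(o,D)$ rather than the nearest-point projection of $\gamma^{-1}o$, but since both points lie within $D$ of $\gamma^{-1}o$ this changes nothing.
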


\begin{proof}
It follows from Proposition \ref{prop:compact set of hyperbolic elements meets a ball}  that there is a sufficiently large radius $D = D(K, o)$ so that given any  hyperbolic isometry $g \in K$, every bi-infinite geodesic line $l : \mathbb{R} \to X$ with endpoints equal to $ \mathcal{E}(g) = (g^-, g^+) $ satisfies $l(\mathbb{R}) \cap \mathrm{B} \neq \emptyset$ where we denote $\mathrm{B} = B_X(D,o)$.  

Let $\mathrm{A}_h \subset X$ be any fixed bi-infinite  geodesic in $X$ with endpoints equal to $\mathcal{E}(h)$. 
Every element $\gamma \in E_\Gamma(h, K)$ satisfies $\gamma h \gamma^{-1} \in K$ by definition. Therefore for every such  $\gamma$ there exists  a point $x_\gamma \in \mathrm{A}_h$ with $ \gamma x_\gamma \in \mathrm{B} $. 
The triangle inequality implies  that
\begin{multline*} 
\mathcal{P}_\Gamma(s;h,K) =  \sum_{\gamma \in E_\Gamma(h, K)}e^{-sd_X(o, \gamma o ) } 
  \le \sum_{\gamma \in E_\Gamma(h, K)}e^{-s(d_X(\gamma o, \gamma x_\gamma) - d_X( \gamma x_\gamma, o))} \le \\
     \le  e^{sD}\sum_{\gamma \in E_\Gamma(h, K)}e^{-sd_X( o,  x_\gamma)} 
\end{multline*}
for every exponent $s > 0$.

Let  $p  \in \mathrm{A}_h$ denote  a fixed nearest point projection of the point $o$ to $\mathrm{A}_h$. The thin triangles condition implies the existence of some $\kappa > 0$, depending only on the hyperbolicity constant of $X$, such that given any other point  $x \in \mathrm{A}_h$ we have 
$$ d_X(x, o) \ge d_X(x, p) + d_X(p, o) - \kappa.  $$
This inequality and our previous estimate for the partial Poincare series give
$$
\mathcal{P}_\Gamma(s;h,K) \le e^{s(- d_X(p, o) + D+\kappa )}  \sum_{\gamma \in E_\Gamma(h, K)} e^{-sd_X( x_\gamma,  p)}
$$
for every exponent $s > 0$. 
The  series that appears  in the last equation is essentially a geometric progression.
 To see this, we denote  for every $ r \in \mathbb{N}$  
$$ E_r(h, K) = \{ \gamma \in E_\Gamma(h, K) \: : \: \text{ $\exists  x_\gamma \in \mathrm{A}_h$ with $d(p, x_\gamma) < r$ and $\gamma x_\gamma \in \mathrm{B}$} \}. $$
Let $r_0 \in \mathbb{N}$ be the minimal number so that $|E_{r_0}(h, K)| > 0$ and choose an arbitrary element $\gamma_h \in E_{r_o}(h,K)$. In particular
$$ E_\Gamma(h, K) = \bigcup_{r \ge r_0} E_r(h, K). $$
According to  Lemma \ref{prop:uniform lattice has discrete orbits}   there is a  number $n > 0$ depending on the subset $K$, the subgroup $\Gamma$ and the basepoint $o$  but independent of the element $h$ so that
$$ |E_r(h, K) \setminus E_{r-1}(h, K)| \le n  $$
for every $r \in \mathbb{N}$. 
We obtain 
$$ \sum_{\gamma \in E_\Gamma(h, K)} e^{-sd_X( x_\gamma,  p)} \le \sum_{r \ge r_0} \sum_{\gamma \in E_r(h, K) \setminus E_{r-1}(h, K)}  e^{-sr} < \alpha' e^{-sr_0} $$
where $ \alpha'$ is the constant $ \alpha'(K, \Gamma, o, s) = n (1-e^{-s})^{-1} $. 
By the triangle inequality and the choice of $\gamma_h$ we have 
%
\begin{multline*}
d_X(\gamma_h o,o) \le 
d_X(\gamma_h o, \gamma_h x_{\gamma_h} ) + d_X(\gamma_h x_{\gamma_h}, o) \le  d_X(o, x_{\gamma_h})  + D \le \\
\le d_X(o, p) +d_X(p, x_{\gamma_h}) + D \le d_X(o, p) + r_0 + D.
\end{multline*}
Putting everything together gives us
$$ 
\mathcal{P}_\Gamma(s;h,K) \le 
\alpha' e^{s(- d_X(p,o) -r_0 +D+\kappa  )}   \le 
\alpha e^{-sd_X(\gamma_h o,o)}
$$
where   $\alpha = \alpha(K,\Gamma,o,s)$ is the constant $ \alpha = \alpha'  e^{s(2D + \kappa)}$.
\end{proof}


\section{Quantitative recurrence for actions of hyperbolic groups}
\label{sec:quantitave recurrence}

Let $X$ be a proper geodesic Gromov hyperbolic metric space and  $G$ be a closed subgroup of $\mathrm{Isom}(X)$ acting cocompactly on $X$ and admitting a uniform lattice. Fix an arbitrary  basepoint $o \in X$.

\subsection*{Asymptotic notations}
\label{subs:asymptotic notation}
In what follows it will be convenient to introduce an  asymptotic notation, writing $a \asymp_c b$ for some constant $ c \ge 1$ to mean
$$ c^{-1}a \le b \le ca. $$
Clearly $a \asymp_{c_1} b$ and $b \asymp_{c_2} d$ together imply $a \asymp_{c_1c_2} d$.

Similarly, if $\mu$ and $\nu$ are a pair of Borel measures on the same Borel space we will write $\mu \asymp_c \nu$ for some constant $ c > 1$ to mean that $\mu$ and $\nu$ are absolutely continuous with respect to each other and their Radon--Nykodim derivative satisfies
$$ \frac{d\mu}{d\nu} \asymp_c 1.$$ 

\subsection*{Growth of orbits}
\label{sub:growth of orbits}
Let $\Gamma$ be a uniform lattice in $G$. Given  a pair of positive radii $0 \le r_1 \le r_2 $ we let $A_\Gamma\left[r_1,r_2\right]$ denote  the \emph{annulus}
$$A_{\Gamma}\left[r_1,r_2\right] = \{\gamma \in \Gamma \: : \: r_1 \le d_X(\gamma o, o) \le r_2\}.$$
It was shown by Coornaert that the critical exponent $\delta(\Gamma)$ controls the growth of annuli in the lattice $\Gamma$.

\begin{theorem}[{\cite[Theorem 7.2]{Coornaert}}]
\label{thm:Coornaert}
 For all sufficiently large $k > 0$ there is a constant $a  = a(k) > 0$ with
$$ |A_\Gamma\left[r,r + k\right]| \asymp_{a} e^{\delta(\Gamma)r}$$
for all $r\in \mathbb{N}$.
\end{theorem}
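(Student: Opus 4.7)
The plan is to apply Patterson--Sullivan theory. Since $\Gamma$ is a uniform lattice it is already known to be of divergence type, so a Patterson--Sullivan measure at the critical exponent $\delta = \delta(\Gamma)$ exists directly. Concretely, any weak-$*$ subsequential limit as $s \downarrow \delta$ of the normalized atomic measures $\nu_s = \mathcal{P}_\Gamma(s)^{-1} \sum_{\gamma \in \Gamma} e^{-sd_X(o,\gamma o)} \delta_{\gamma o}$ on $\overline{X}$ is supported on $\partial X$ and produces a Borel probability measure $\nu$ that is quasi-conformal of dimension $\delta$: there is an absolute constant $c \ge 1$ with $\gamma_* \nu \asymp_c e^{-\delta \beta_\bullet(\gamma^{-1}o,\, o)} \nu$ for every $\gamma \in \Gamma$, where $\beta$ is the Busemann quasi-cocycle on $\partial X$.

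The key geometric input is then a version of Sullivan's shadow lemma in the Gromov hyperbolic setting: for every sufficiently large $R_0$ there is a constant $c_0 = c_0(R_0) \ge 1$ so that the shadow $\mathcal{O}_{R_0}(\gamma o) \subset \partial X$ cast from $o$ by the ball $B_X(\gamma o, R_0)$ satisfies
$$ \nu(\mathcal{O}_{R_0}(\gamma o)) \asymp_{c_0} e^{-\delta d_X(o,\gamma o)} $$
for every $\gamma \in \Gamma$. The upper bound follows from quasi-conformality combined with the fact that under translation by $\gamma^{-1}$ the Busemann factor is nearly constant over the shadow. The lower bound uses that the complement of $\mathcal{O}_{R_0}(\gamma o)$, pulled back by $\gamma^{-1}$, is contained in a shadow based at $\gamma^{-1} o$ whose $\nu$-mass is bounded strictly below $1$ once $R_0$ is large; this is where non-elementarity of $\Gamma$ enters.

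Cocompactness of $\Gamma$ on $X$ provides the two combinatorial facts that convert the shadow lemma into an orbit count. First, since $\Gamma o$ is cobounded there exists $k_0$ so that for every $k \ge k_0$, every $r \in \NN$ and every $\xi \in \partial X$, some $\gamma \in A_\Gamma[r, r+k]$ satisfies $\xi \in \mathcal{O}_{R_0}(\gamma o)$: follow the geodesic ray from $o$ towards $\xi$ and pick up a $\Gamma$-translate of $o$ close to the ray at depth roughly $r$. Second, Lemma~\ref{prop:uniform lattice has discrete orbits}, together with the bounded thickness of shadows, gives a uniform bound $M = M(R_0, k)$ on the number of $\gamma \in A_\Gamma[r,r+k]$ whose shadow contains a prescribed $\xi$. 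Integrating the shadow-lemma estimate against $\nu$ and using these two facts sandwiches $\sum_{\gamma \in A_\Gamma[r,r+k]} \nu(\mathcal{O}_{R_0}(\gamma o))$ between $\nu(\partial X) = 1$ and $M$, which after dividing by $c_0^{\pm 1} e^{-\delta r}$ yields $|A_\Gamma[r,r+k]| \asymp_{a} e^{\delta r}$ for an appropriate $a = a(k)$.

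The technical heart of the argument is the shadow lemma in the genuine Gromov hyperbolic setting, where Busemann functions are only quasi-cocycles and shadows are controlled only up to additive errors; the careful choice of $R_0$ and the bookkeeping of these additive losses is the step that absorbs most of the work. Once the shadow lemma is in hand, the two cocompactness-based covering and multiplicity facts follow from the standard geometry of hyperbolic spaces, and the combinatorial sandwich above is then automatic.
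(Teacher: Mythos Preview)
The paper does not supply its own proof of this statement: Theorem~\ref{thm:Coornaert} is simply quoted from \cite[Theorem~7.2]{Coornaert} and used as a black box. Your outline is in fact a faithful sketch of Coornaert's original argument---Patterson--Sullivan measure, shadow lemma, then the covering/multiplicity sandwich---and is correct; the paper even reproves the covering and bounded-multiplicity facts later (Proposition~\ref{prop:covering lemma for shadows}) for its own purposes, but never assembles them into a proof of Theorem~\ref{thm:Coornaert}.
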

In fact Coornaert's theorem  holds for any convex-cocompact subgroup.

\subsection*{The maximal ergodic theorem and quantitative recurrence}

Fix a uniform lattice $\Gamma$ in the group $\mathrm{Isom}(X)$ as well as a Borel space $(Z,\mu)$  admitting an ergodic probability measure preserving action of $\Gamma$.

The following maximal ergodic theorem for probability measure preserving actions of lattices in Gromov hyperbolic spaces  was recently obtained by Bowen and Nevo  \cite[Theorem 1.1]{Bowen-Nevo-negcurved}.

\begin{theorem}[Maximal ergodic theorem for hyperbolic groups]
\label{thm:recurrenceassumption}
For all sufficiently large $ k > 0$ there is a constant $b = b(k) > 0$ 
such that for any    vector  $f\in L^{2}(X,\mu)$  we have that
$$ \| \sup_{r\in\mathbb{N}} e^{-\delta(\Gamma) r} \sum_{g\in A_\Gamma[r,r+k]} g_* f \|_2 \le b \|f\|_2. $$
\end{theorem}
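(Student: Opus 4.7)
The plan is to establish this maximal inequality via a spectral analysis of averaging operators on the Gromov boundary, in the spirit of the Nevo--Stein maximal ergodic theorems for semisimple Lie groups.

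First, I would normalize using Coornaert's theorem (Theorem~\ref{thm:Coornaert}), which implies $|A_\Gamma[r,r+k]| \asymp_a e^{\delta(\Gamma) r}$. Hence the operators $M_r f := e^{-\delta(\Gamma) r} \sum_{g \in A_\Gamma[r,r+k]} g_* f$ coincide, up to a bounded multiplicative constant depending only on $k$, with the uniform averages $N_r f := |A_\Gamma[r,r+k]|^{-1}\sum_{g \in A_\Gamma[r,r+k]} g_* f$; it therefore suffices to prove the maximal $L^2$ estimate for the operators $N_r$.

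Second, for $k$ sufficiently large I would construct a Markov operator $P$ acting on $L^2(\partial X, \nu)$, where $\nu$ is a Patterson--Sullivan measure for $\Gamma$ of dimension $\delta(\Gamma)$. The idea is to arrange that the $n$-fold iterate $P^n$ corresponds, via the Sullivan shadow lemma, to averaging of functions over the union of shadows cast by the annulus $A_\Gamma[nk,(n+1)k]$. Here the hyperbolicity of $X$, together with the conformal behaviour of $\nu$, ensures that the relevant shadows form an essentially disjoint cover of $\partial X$ at each scale, so that $P$ is genuinely a Markov operator. The exponential mixing of the geodesic flow on $\Gamma \backslash X$ with respect to the Bowen--Margulis--Sullivan measure then translates into a spectral gap $\|P^n\|_{L^2_0} \le C\lambda^n$ with $\lambda < 1$. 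A Calder\'on-style transfer principle converts this boundary spectral estimate into a maximal inequality for the operators $N_r$ acting on any probability measure preserving $\Gamma$-space $(Z,\mu)$: the exponential decay of $P^n$ on $L^2_0(\partial X,\nu)$ dominates the tail of the maximal function $\sup_r |N_r f|$, and a square-function argument summing the $L^2$ norms in $n$ produces the bound $b = b(k)$.

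The main obstacle, and the technical heart of Bowen--Nevo \cite{Bowen-Nevo-negcurved}, is the uniform construction and spectral analysis of the operator $P$ in the full generality of a closed cocompact $G \le \mathrm{Isom}(X)$ admitting a uniform lattice. In the classical rank-one Lie group setting this can be done through representation theory of semisimple groups, but in the present generality one must rely on the Patterson--Sullivan machinery, on uniform shadow estimates (using compactness of $\Gamma \backslash X$ to obtain geometric control), and on a Paulin--Pollicott--Schapira style mixing theorem for the geodesic flow on the compact quotient. A secondary technical issue is ensuring that the transfer step does not lose too much in the constants as $k$ varies, which is precisely the reason the theorem is stated only for all sufficiently large $k$ rather than uniformly in $k$.
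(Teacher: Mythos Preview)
The paper does not prove this statement. Theorem~\ref{thm:recurrenceassumption} is quoted verbatim from Bowen--Nevo \cite[Theorem 1.1]{Bowen-Nevo-negcurved} and used as a black box; no argument is given or even sketched. There is therefore nothing in the paper to compare your proposal against.

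On your sketch itself: the reduction from $M_r$ to the normalized averages $N_r$ via Coornaert's theorem is correct and harmless. The second step, however, reads more like a rank-one Lie group heuristic than the actual Bowen--Nevo argument for general word-hyperbolic $\Gamma$. Their proof proceeds through the automatic structure of $\Gamma$ (Cannon's theorem) and the associated subshift of finite type, analyzing a transfer operator on that symbolic model, rather than through a Markov operator on $(\partial X,\nu)$ assembled from shadows. The specific construction you propose has a gap: shadows at a fixed scale overlap with bounded but nonzero multiplicity, and annuli $A_\Gamma[nk,(n+1)k]$ do not compose under group multiplication, so there is no operator $P$ on $L^2(\partial X,\nu)$ whose iterates $P^n$ are, even up to bounded error, the annulus averages $N_{nk}$. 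Getting an honest semigroup structure is exactly what the Markov coding supplies and what a direct shadow construction does not.
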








We obtain the following straightforward quantitative recurrence result as a corollary of Theorem \ref{thm:recurrenceassumption}.
Recall that given a Borel subset $U \subset Z$ and a point $x \in Z$ we denote
$$ E_\Gamma(x,U) = \{ g\in \Gamma \: : \: gx \in U \}. $$
Moreover  for every $r, k > 0$ let
$$ E_{r,k}(x,U) = E_\Gamma(x,U) \cap A_\Gamma[r,r+k]. $$

\begin{cor}[Quantitive recurrence]
\label{cor:quantitive recurence}
For every sufficiently large $k > 0$ there exists a constant $0 < \kappa < 1$ so that  every Borel subset $U \subset Z$ with $\mu(U) > 1 - \kappa$ satisfies
$$ \mu\left(\{x \in Z \: : \: \inf_{r \in \mathbb{N}} e^{-\delta(\Gamma) r}  |E_{r,k}(x,U)| > \kappa \} \right) > \kappa. $$
\end{cor}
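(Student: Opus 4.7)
The plan is to apply the maximal ergodic theorem to the indicator function of the complement $Z \setminus U$ and then convert the resulting control on the ``bad'' exceedance set into a lower bound on $|E_{r,k}(x,U)|$ using Coornaert's growth estimate.

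More concretely, for a sufficiently large $k$ let $a = a(k)$ and $b = b(k)$ be the constants provided by Theorem \ref{thm:Coornaert} and Theorem \ref{thm:recurrenceassumption} respectively. Given $U \subset Z$ with $\mu(U) > 1 - \kappa$, I set $f = \mathbf{1}_{Z \setminus U}$, so $\|f\|_2^2 < \kappa$. Observe that for every $x \in Z$
$$ \sum_{g \in A_\Gamma[r,r+k]} g_\ast f(x) \; = \; |A_\Gamma[r,r+k]| - |E_{r,k}(x,U)|. $$
Dividing by $e^{\delta(\Gamma)r}$ and using Coornaert's lower bound $e^{-\delta(\Gamma)r}|A_\Gamma[r,r+k]| \ge 1/a$ shows that if $e^{-\delta(\Gamma)r}|E_{r,k}(x,U)| \le \kappa$ for some $r$, then
$$ e^{-\delta(\Gamma) r} \sum_{g \in A_\Gamma[r,r+k]} g_\ast f(x) \; \ge \; \tfrac{1}{a} - \kappa. $$

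Next I apply Theorem \ref{thm:recurrenceassumption} to $f$ and the Chebyshev inequality to the resulting $L^2$ bound. Provided $\kappa$ is chosen with $\kappa < \tfrac{1}{2a}$, the above observation yields
$$ \mu\!\left( \bigl\{ x \in Z \: : \: \inf_{r \in \mathbb{N}} e^{-\delta(\Gamma) r} |E_{r,k}(x,U)| \le \kappa \bigr\} \right) \; \le \; \mu\!\left( \bigl\{ x \in Z \: : \: \sup_{r \in \mathbb{N}} e^{-\delta(\Gamma) r} \sum_{g \in A_\Gamma[r,r+k]} g_\ast f(x) \ge \tfrac{1}{2a} \bigr\} \right) \; \le \; 4 a^2 b^2 \kappa. $$

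Finally I choose $\kappa \in (0,1)$ small enough so that simultaneously $\kappa < \tfrac{1}{2a}$ and $4a^2 b^2 \kappa < 1 - \kappa$, i.e. $\kappa < \min\!\left(\tfrac{1}{2a}, \tfrac{1}{1 + 4a^2 b^2}\right)$. The displayed inequality then gives the complement bound $\mu(\cdots) < 1-\kappa$, which is exactly the assertion of the corollary. I do not expect any serious obstacle: the argument is essentially a Chebyshev-style conversion of the $L^2$ maximal inequality into a distributional statement, with Coornaert's annulus growth playing the role of the ``expected value'' against which deviations are measured. The only mildly delicate point is the bookkeeping that ensures the two thresholds on $\kappa$ can be chosen uniformly in $U$, which is immediate since $a$ and $b$ depend only on $k$ (and on $\Gamma$ through the maximal inequality), not on $U$.
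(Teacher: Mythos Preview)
Your proposal is correct and follows essentially the same approach as the paper: both apply the maximal ergodic theorem to the indicator $f = \chi_{Z\setminus U}$ and combine it with Coornaert's annulus growth estimate to control $\inf_r e^{-\delta(\Gamma)r}|E_{r,k}(x,U)|$. The only cosmetic difference is that the paper manipulates $L^2$ norms directly via the reverse triangle inequality to obtain a lower bound on $\|\inf_r e^{-\delta(\Gamma)r}\sum g_*\chi_U\|_2$, whereas you pass through Chebyshev's inequality; your version is in fact more explicit about extracting the final measure bound.
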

\begin{proof}
Let $k > 0$ be sufficiently large so that there are  constants $a = a(k) $ and  $b = b(k) $ as in Theorems \ref{thm:Coornaert} and \ref{thm:recurrenceassumption}. The maximal ergodic theorem implies that 
\begin{align*}
b \mu(U^c) &\ge \|\sup_{r\in\mathbb{N}} e^{-\delta(\Gamma)r} \sum_{g\in A_\Gamma[r,r+k]} g_* (\chi_X - \chi_{U}) \|_2   \ge \\
&\ge \|\sup_{r\in\mathbb{N}} (\frac{1}{a} \chi_X - e^{-\delta(\Gamma)r} \sum_{g\in A_\Gamma[r,r+k]} g_*  \chi_{U} ) \|_2 = \\
&= \|\frac{1}{a} \chi_X - \inf_{r\in\mathbb{N}} (  e^{-\delta(\Gamma)r} \sum_{g\in A_\Gamma[r,r+k]} g_*  \chi_{U} ) \|_2 \ge \\
&\ge \frac{1}{a} \|\chi_X\|_2 - \|\inf_{r\in\mathbb{N}} (  e^{-\delta(\Gamma)r} \sum_{g\in A_\Gamma[r,r+k]} g_*  \chi_{U} ) \|_2
\end{align*}
We obtain
$$ \|\inf_{r\in\mathbb{N}} e^{-\delta(\Gamma)r} \sum_{g\in A_\Gamma[r,r+k]} g_*  \chi_{U}  \|_2 \ge \frac{1}{a} - b \mu(U^c) > 0 $$
provided that $\mu(U) > 1- \kappa$ with $ 0 < \kappa < 1$ sufficiently small so that $ ab\kappa < 1 $.
\end{proof}

\begin{cor}
\label{cor:quantitave series from ergodic theorem}
There is a sufficiently small $\kappa>0$ such that every Borel subset $U \subset Z$ with $\mu(U) > 1- \kappa$ satisfies
$$ \mu\left( \{x \in Z \: : \: 
\mathcal{P}_\Gamma(\delta(\Gamma); x, U) = \infty \} \right) > \kappa $$
\end{cor}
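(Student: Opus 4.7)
The plan is to deduce the divergence of the partial Poincare series as a direct consequence of the preceding quantitative recurrence corollary. The guiding observation is that the pointwise lower bound $|E_{r,k}(x,U)| > \kappa e^{\delta(\Gamma) r}$ provided there grows at precisely the exponential rate that compensates the weight $e^{-\delta(\Gamma) d_X(o,\gamma o)}$ appearing in the Poincare series, so each thick annulus of elements contributes a uniformly positive amount to the series.

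More concretely, I would fix $k$ large enough that Corollary 3.9 (quantitative recurrence) applies, with an associated constant $\kappa \in (0,1)$, and let
\[
W = \Bigl\{ x \in Z \: : \: \inf_{r \in \mathbb{N}} e^{-\delta(\Gamma) r} |E_{r,k}(x,U)| > \kappa \Bigr\}.
\]
Whenever $\mu(U) > 1 - \kappa$ the corollary yields $\mu(W) > \kappa$, so it suffices to prove that $\mathcal{P}_\Gamma(\delta(\Gamma); x, U) = \infty$ for every $x \in W$. To this end I would restrict attention to a sparse sequence of non-overlapping annuli, e.g.\ $r_n = 2nk$ for $n \in \mathbb{N}$, so that the sets $A_\Gamma[r_n, r_n + k]$ are pairwise disjoint. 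For each $\gamma \in E_{r_n,k}(x,U)$ the crude bound $d_X(o,\gamma o) \le r_n + k$ gives $e^{-\delta(\Gamma) d_X(o,\gamma o)} \ge e^{-\delta(\Gamma)(r_n + k)}$, and combining this with the lower bound $|E_{r_n,k}(x,U)| > \kappa e^{\delta(\Gamma) r_n}$ valid on $W$ yields
\[
\sum_{\gamma \in E_{r_n, k}(x,U)} e^{-\delta(\Gamma) d_X(o,\gamma o)} \ge \kappa e^{-\delta(\Gamma) k}
\]
for every $n$. Since the annuli are disjoint, summing these contributions shows that $\mathcal{P}_\Gamma(\delta(\Gamma); x, U)$ diverges.

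This argument is essentially bookkeeping: the entire analytic content, namely extracting a uniform pointwise lower bound on annular orbit intersections from the maximal ergodic theorem, has already been absorbed into Corollary 3.9. The only point requiring mild care is the choice of a sparse radii sequence (the factor $2$ in $r_n = 2nk$) to guarantee disjoint annuli, so that the individual annular lower bounds add without double-counting. There is therefore no real obstacle here; the step is a quick corollary of the preceding one and serves mainly to repackage quantitative recurrence in the form of divergence of an explicit Poincare-type series, which is the form that will be needed in the subsequent arguments.
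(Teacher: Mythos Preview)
Your proposal is correct and follows essentially the same approach as the paper: apply the quantitative recurrence corollary to obtain a set of measure $>\kappa$ on which each annulus contributes enough, then sum over annuli to force divergence. The only cosmetic difference is that the paper uses consecutive annuli $A_\Gamma[nk,(n+1)k]$ rather than your sparser sequence $r_n=2nk$; your extra spacing makes disjointness cleaner but is not strictly needed, since any bounded overcounting at annulus boundaries is irrelevant for establishing divergence.
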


\begin{proof}
Let $k > 0$ be sufficiently large and $\kappa>0$  sufficiently small as in Corollary \ref{cor:quantitive recurence}. 
In particular, given any Borel subset $U \subset Z$ with $\mu(U) > 1-\kappa$ there exists some  Borel subset $Y \subset Z$ with $\mu(Y) > \kappa$  such that  every point $x \in Y$ satisfies
$$ \inf_{r\in \mathbb{N}} e^{-\delta(\Gamma)r} |E_{r,k}(x,U)|> \kappa .$$
It follows that for every point $x \in Y$ 
\begin{align*}
\mathcal{P}_\Gamma(\delta(\Gamma); x, U) &= \sum_{\gamma \in E_\Gamma(x,U) }e^{-\delta(\Gamma) d_X(\gamma o,o)} \ge \\ &\ge \sum_{n \in \mathbb{N}} \sum_{\gamma \in E_{nk, k}(x,U)} e^{-\delta(\Gamma) d_X(\gamma o,o) } \ge \\
&\ge \sum_{n \in \mathbb{N}} e^{-\delta(\Gamma)(n+1)k} |E_{nk,k}(x,U)| \ge e^{-\delta(\Gamma)k} \sum_{n\in\mathbb{N}} \kappa = \infty
\end{align*}
as required.

 
 
\end{proof}



\section{Discrete invariant random subgroups}

\subsection*{Invariant random subgroups}
\label{sub:IRS}
Assume $X$ is a proper geodesic Gromov hyperbolic metric space. 
Let $G$ be a closed subgroup of $\mathrm{Isom}(X)$. We will use $\mathrm{Sub}(G)$ to denote the space of all closed subgroups of $G$ endowed with the Chabauty topology\footnote{This topology is generated by sub-basic Chabauty sets of the form $\{\text{$H\le G$ closed} \: : \: H \cap U \neq \emptyset\}$ for all open subsets $U \subset G$ as well as of the form $\{\text{$H\le G$ closed} \: : \: H \cap K = \emptyset\}$ for all compact subsets $K \subset G$.}. Recall that the space $\mathrm{Sub}(G)$ is compact. The group $G$ admits a continuous action on $\mathrm{Sub}(G)$ by conjugation. 

An \emph{invariant random subgroup} of $G$ is a conjugation invariant Borel probability measure on $\Sub{G}$. For example, every lattice $\Gamma$ in $G$ gives rise to an invariant random subgroup by pushing forward the $G$-invariant probability measure on $G/\Gamma$ to $\mathrm{Sub}(G)$ via the map $g\Gamma \mapsto g\Gamma g^{-1}$. More generally, given a normal subgroup $N$ in some lattice $\Gamma$ in $G$, we obtain an invariant random subgroup of $G$ by pushing forward  the Dirac mass at the  point  $N \in \mathrm{Sub}(\Gamma)$ in the same way.

We say that an invariant random subgroup is discrete, of convergence or divergence type, geometrically dense, etc, if this property is satisfied on a $\mu$-conull set of closed subgroups.
An invariant random subgroup $\mu$ is \emph{ergodic} if the $G$-action on $(\mathrm{Sub}, \mu)$ is ergodic.

In the current work we are mostly interested in discrete invariant random subgroups. This turns out to be  essentially always the case for semisimple linear groups over local fields. 









\begin{theorem}[\cite{7S, GL}]
\label{thm:borel density theorem for IRS}
Let $G$ be a simple linear group over a local field\footnote{In the positive characteristic case Theorem \ref{thm:borel density theorem for IRS} has an additional  very mild  technical  assumption.}.   If $\mu$ is a non-atomic invariant random subgroup of $G$ then $\mu$ is discrete and Zariski dense.
\end{theorem}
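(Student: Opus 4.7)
The plan is to reduce both assertions — discreteness and Zariski density — to a single application of the Furstenberg--Borel density lemma for algebraic actions of semisimple groups over local fields. In both parts, the strategy is to push $\mu$ forward along a suitable Borel, $G$-equivariant map into a $G$-variety, observe that the resulting probability measure is $G$-invariant and therefore supported on $G$-fixed points, and then use simplicity of $G$ to reduce the set of fixed points to two trivial possibilities, both of which are ruled out by non-atomicity.

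For Zariski density, I would consider the map
\[
\Psi : \mathrm{Sub}(G) \longrightarrow \mathcal{V}, \qquad H \mapsto \overline{H}^{Z},
\]
where $\mathcal{V}$ denotes the disjoint union of the Chow varieties parametrizing closed algebraic subgroups of $G$. This map is Borel and $G$-equivariant with respect to conjugation. The Furstenberg--Borel density lemma asserts that any $G$-invariant probability measure on a quasi-projective $G$-variety, with $G$ a semisimple $k$-group without nontrivial $k$-characters, is concentrated on the $G$-fixed points. These fixed points are precisely the normal algebraic subgroups of $G$, and by simplicity they reduce to $\{e\}$ and $G$. Concentration on $\{e\}$ would force $\mu$ to be the Dirac mass at the trivial subgroup, contradicting non-atomicity; hence $\overline{H}^{Z} = G$ for $\mu$-a.e.\ $H$.

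For discreteness I would apply the same principle to the assignment $H \mapsto \mathfrak{h}$, where $\mathfrak{h}$ is the Lie algebra of the identity component of $H$ (taken in the $k$-analytic sense, trivial when $H$ is already discrete). This is a Borel, $G$-equivariant map from $\mathrm{Sub}(G)$ to the Grassmannian of subalgebras of $\g$, with $G$ acting via the adjoint representation. Since the adjoint representation of a simple $G$ is irreducible, the $\mathrm{Ad}(G)$-fixed subspaces of $\g$ are only $\{0\}$ and $\g$. The second possibility forces $H^{0}$ to be open in $G$, and by simplicity $H = G$ almost surely, which again is an atom excluded by hypothesis. Therefore $\mathfrak{h} = 0$ almost surely and $\mu$-almost every $H$ is discrete.

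The main obstacle I anticipate is not conceptual but a matter of measurability: one must verify that $H \mapsto \overline{H}^{Z}$ and $H \mapsto \mathfrak{h}$ are indeed Borel maps with respect to the Chabauty topology, and that the Chow variety can be written as a countable union of quasi-projective $G$-varieties so that the Furstenberg--Borel density lemma applies component by component. In the real case this is routine, but the non-archimedean situation requires care with $p$-adic analyticity, and in positive characteristic pseudo-reductive phenomena and non-smooth subgroup schemes force the mild additional assumption alluded to in the footnote.
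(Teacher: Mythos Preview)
The paper does not prove this theorem; it is quoted as a black-box result from \cite{7S, GL} and used without argument. There is therefore no proof in the paper to compare your proposal against.

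That said, your outline follows the strategy of the cited references and is sound in its broad strokes: push $\mu$ forward along a $G$-equivariant Borel map into an algebraic $G$-variety (Zariski closure into a moduli of algebraic subgroups; Lie algebra into a Grassmannian of $\g$), invoke the Furstenberg--Borel density lemma to concentrate the image on $G$-fixed points, and use simplicity to reduce those to the two trivial options. One step is too quick as written: in the non-archimedean case, ``$\mathfrak h=\g$ implies $H$ is open, and by simplicity $H=G$ almost surely'' does not follow directly, since simple $p$-adic groups have many proper open subgroups. What is actually needed is the observation that if $\mu$-almost every $H$ is open then the conjugation action has open stabilizers $N_G(H)\supset H$, and for a topologically simple $G$ (no proper open normal subgroup) such an action with an invariant probability measure must be essentially trivial, forcing $\mu$ to be supported on closed normal subgroups and hence atomic. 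Your remarks on measurability and on the positive-characteristic caveat are accurate.
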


Any group $G$ in Theorem \ref{thm:borel density theorem for IRS} which has rank one  is a closed cocompact subgroup in the group of isometries of a certain proper geodesic Gromov hyperbolic space, namely the corresponding rank one symmetric space or Bruhat--Tits tree.


\subsection*{The critical exponent of an invariant random subgroup}

Let $\mathrm{DSub}(G)$ be the subspace of the Chabauty space $\mathrm{Sub}(G)$ consisting of discrete subgroups. 
The  \emph{critical exponent} $\delta(\mu)$ of the discrete invariant random subgroup $\mu$ is defined to be the expected value 
$$\delta(\mu) = \int_{\mathrm{DSub}(G)} \delta(\Gamma) \; \mathrm{d}\mu(\Gamma).$$ 
The critical exponent $\delta(\mu)$ is well defined and lies in the closed interval $\left[0,\dim_\textrm{H}(\partial X)\right]$ by Proposition \ref{prop:critical exponent is measurable, conjugation invariant} of the appendix. If $\mu$ is  an ergodic invariant random subgroup then  $\delta(\Gamma) = \delta(\mu)$ for $\mu$-almost every discrete subgroup $\Gamma$ of $G$.

\subsection*{Discrete  invariant random subgroups in Gromov hyperbolic spaces}
\label{sec:IRS}

Let $\Gamma$ be a uniform lattice in $G$.  Recall that $\mathcal{H}$ denotes the subset of $G$ consisting of all hyperbolic elements. We assume that $G$ is non-elementary so that in particular $\dim_\text{H}(\partial X) > 0$.

\begin{prop}
\label{prop:existence of a neighborhood having positive probability}
Let $\mu$ be an infinite invariant random subgroup of $G$. Then for every $ 0 < \kappa < 1$ there is an open relatively  compact  subset $V \subset \mathcal{H}$ such that $$\mu( \{ \text{$H \le G$ closed} \: : \: H \cap V \neq \emptyset \}) > 1-\kappa.$$
\end{prop}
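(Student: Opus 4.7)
My plan is to produce $V$ as one member of an exhaustion of $\mathcal{H}$ by open relatively compact subsets, reducing the proposition to the statement that a $\mu$-random closed subgroup meets $\mathcal{H}$ at all. Since $\mathcal{H}$ is open in the second countable, locally compact group $G$ by Proposition \ref{prop:hyperbolic elements in CAT minus one groups are open}, I can write $\mathcal{H} = \bigcup_{n} V_n$ with each $V_n$ open and relatively compact and $V_n \subset V_{n+1}$. The sets
\[
\Omega_n = \{H \in \mathrm{Sub}(G) \: : \: H \cap V_n \neq \emptyset\}
\]
are open in the Chabauty topology, form an ascending chain, and exhaust $\Omega_\infty = \{H \: : \: H \cap \mathcal{H} \neq \emptyset\}$. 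By continuity of measure from below, the proposition reduces to showing $\mu(\Omega_\infty) = 1$; once that is established some $\Omega_n$ must have $\mu(\Omega_n) > 1 - \kappa$ and I set $V := V_n$.

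To verify $\mu(\Omega_\infty) = 1$ I will appeal to the classification of discrete subgroups of $\mathrm{Isom}(X)$ into four classes: finite, non-elementary, elementary hyperbolic, and elementary parabolic. The infinity hypothesis excludes the first class while the middle two classes contain a hyperbolic element by definition, so the only case to rule out is that $\mu$ puts positive mass on the elementary parabolic class. On that locus the unique boundary fixed point provides a $G$-equivariant map $H \mapsto \xi(H) \in \partial X$, which is Borel in the Chabauty topology by the measurability results deferred to the appendix. Conjugation invariance of $\mu$ then pushes forward to a $G$-invariant Borel probability measure on $\partial X$.

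The existence of such a measure is ruled out by the assumption that $G$ is non-elementary together with the existence of a uniform lattice $\Gamma$ in $G$: any $G$-invariant probability measure would in particular be $\Gamma$-invariant, but two independent hyperbolic elements of $\Gamma$ exhibit north-south dynamics that force a $\Gamma$-invariant probability measure on $\partial X$ to be concentrated on their pairs of fixed points, which is impossible for independent pairs. This contradiction establishes $\mu(\Omega_\infty) = 1$ and completes the argument.

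The soft exhaustion and continuity-of-measure arguments are essentially bookkeeping; the real content is the exclusion of an elementary parabolic invariant random subgroup. The main obstacle is therefore the combination of two ingredients: the classification of infinite discrete subgroups in a Gromov hyperbolic setting (which ensures that absence of hyperbolic elements forces a canonical boundary fixed point), and the Borel measurability of the assignment $H \mapsto \xi(H)$ in the Chabauty topology. Given these, the non-existence of a $G$-invariant probability measure on the Gromov boundary closes the argument.
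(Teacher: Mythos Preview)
Your proposal is correct and follows essentially the same route as the paper: both proofs exhaust $\mathcal{H}$ by relatively compact open sets (using that $\mathcal{H}$ is open), reduce to showing that $\mu$-almost every subgroup contains a hyperbolic element, and rule out the parabolic case by pushing forward the unique-limit-point map to obtain a $G$-invariant probability measure on $\partial X$, contradicting non-elementariness of $G$. Your version is a bit more explicit about the classification of subgroups and about why no $G$-invariant probability measure on $\partial X$ can exist, but the structure is identical.
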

\begin{proof}
We first claim that $\mu$-almost every closed subgroup contains hyperbolic elements. 
Since $\mu$-almost every closed subgroup $H$ is infinite we may write the invariant random subgroup $\mu$ as a convex combination 
$$ \mu = \theta \mu_\text{parabolic} + (1-\theta) \mu_\text{non-parabolic} $$ for some $0 \le \theta \le 1$. Here $\mu_\text{parabolic}$-almost every closed subgroup $H$ has $|\Lambda(H)| = 1$ and $\mu_\text{non-parabolic}$-almost every closed subgroup contains hyperbolic elements.
It suffices to show that  $\theta = 0$. 

Given a subgroup $H$ with $|\Lambda(H)| = 1$ let $\lambda_H \in \partial X$ denote the unique limit point of $H$.  We may push forward $\mu_\text{parabolic}$ via the map $H \mapsto \lambda_H$ to obtain a $G$-invariant probability measure on the Gromov boundary of $X$. This is impossible, provided that $G$ itself is non-elementary. Therefore $\theta = 0$ as required. Thus we have shown that $\mu$-almost every subgroup contains hyperbolic elements. 

Finally note that the collection $\mathcal{H}$ of hyperbolic elements in $G$ is open by  Proposition \ref{prop:hyperbolic elements in CAT minus one groups are open} and in particular can be exhausted by a countable collection of relatively compact open subsets. This completes the proof.
\end{proof}


We are now ready to present the proof of one of our main results. 




\begin{thm}
\label{thm:divergence at one half of critical exponent}
Let  $\mu$ be a discrete infinite invariant random subgroup of $G$.  Assume that $G$ is non-elementary. Then
$$ \mathcal{P}_\Delta\left(\frac{\dim_\mathrm{H}(\partial X)}{2}\right) = \infty $$
for $\mu$-almost every closed subgroup $\Delta$ of $G$.
\end{thm}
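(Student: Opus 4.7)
Set $d := \dim_\mathrm{H}(\partial X)$ and fix a uniform lattice $\Gamma \le G$; by Coornaert's theorem $\delta(\Gamma) = d$. The plan is to apply the quantitative recurrence Corollary~\ref{cor:quantitave series from ergodic theorem} to the $\Gamma$-action by conjugation on $(\mathrm{Sub}(G),\mu)$, and then convert the resulting partial Poincar\'e series for $\Gamma$ into the full Poincar\'e series of $\Delta$ at the halved exponent $d/2$. The exchange of exponents comes from composing Lemmas~\ref{lemma:introducing one half} and~\ref{lem:a bound for the shortest element in hyperbolic groups}, which together give
\[
\mathcal{P}_\Gamma(s;h,K) \;\le\; \alpha\beta \, e^{-\tfrac{s}{2} d_X(ho,o)}
\]
for every hyperbolic $h$ and every compact $K \subset \mathcal{H}$, with constants $\alpha,\beta$ depending on $K,\Gamma,o,s$ but not on $h$. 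This is the one place where hyperbolicity enters quantitatively.

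\textbf{The argument at positive measure.} Let $\kappa > 0$ be the constant supplied by Corollary~\ref{cor:quantitave series from ergodic theorem} for the probability measure preserving $\Gamma$-action on $(\mathrm{Sub}(G),\mu)$. Using Proposition~\ref{prop:existence of a neighborhood having positive probability} choose an open relatively compact $V \subset \mathcal{H}$ such that $W := \{H \le G \text{ closed} : H \cap V \neq \emptyset\}$ satisfies $\mu(W) > 1 - \kappa$, and set $K := \overline{V} \subset \mathcal{H}$. The corollary produces a Borel set $F \subset \mathrm{Sub}(G)$ of measure $\mu(F) > \kappa$ on which
\[
\mathcal{P}_\Gamma(d;\Delta,W) \;=\; \sum_{\gamma \in \Gamma \,:\, \gamma \Delta \gamma^{-1} \cap V \neq \emptyset} e^{-d\, d_X(\gamma o,o)} \;=\; \infty.
\]
For every such $\gamma$ there is at least one hyperbolic $h \in \Delta$ with $\gamma h \gamma^{-1} \in V$, so $\gamma \in E_\Gamma(h,V) \subset E_\Gamma(h,K)$. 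Partitioning the sum by such a choice of $h$ (accepting possible double-counting, which only strengthens the inequality) and then applying the composed estimate above yields
\[
\infty \;=\; \mathcal{P}_\Gamma(d;\Delta,W) \;\le\; \sum_{h \in \Delta \cap \mathcal{H}} \mathcal{P}_\Gamma(d;h,K) \;\le\; \alpha\beta \sum_{h \in \Delta} e^{-\tfrac{d}{2} d_X(ho,o)} \;=\; \alpha\beta\, \mathcal{P}_\Delta(d/2),
\]
so $\mathcal{P}_\Delta(d/2) = \infty$ for every $\Delta \in F$.

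\textbf{From positive to full measure.} A triangle-inequality computation shows $\mathcal{P}_{g\Delta g^{-1}}(d/2) \asymp \mathcal{P}_\Delta(d/2)$ up to a constant depending on $g$, so the set $\{\Delta : \mathcal{P}_\Delta(d/2) = \infty\}$ is $G$-conjugation invariant (and Borel, by the appendix). Decomposing $\mu$ into $G$-ergodic components, each of which is again a discrete infinite invariant random subgroup, the preceding paragraph yields a positive measure subset of divergence in every component. Ergodicity of the component then promotes this conjugation-invariant subset to full measure there, and integrating over the decomposition gives the theorem.

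\textbf{Main obstacle.} The analytic heart of the proof, namely the halved-exponent bound $\mathcal{P}_\Gamma(s;h,K) \lesssim e^{-(s/2)d_X(ho,o)}$, is already supplied by Section~3 via the shortest-element lemma. The two delicate points that remain are the bookkeeping that rewrites a $\Gamma$-indexed partial Poincar\'e series as a sum over elements $h \in \Delta$, and the promotion of positive measure to full measure; neither requires new ideas, but both rely essentially on the conjugation invariance of $\mu$ and of the divergence condition.
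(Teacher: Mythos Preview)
Your proof is correct and follows essentially the same route as the paper's: apply Corollary~\ref{cor:quantitave series from ergodic theorem} to the conjugation action on $(\mathrm{Sub}(G),\mu)$ with the Chabauty-open set $\Omega_V$, then combine Lemmas~\ref{lemma:introducing one half} and~\ref{lem:a bound for the shortest element in hyperbolic groups} to convert the divergent partial $\Gamma$-series into divergence of $\mathcal{P}_\Delta(d/2)$, and finally pass to full measure by ergodicity. The only cosmetic differences are that the paper reduces to the ergodic case at the outset rather than at the end, and that you make the passage from $V$ to the compact $K=\overline{V}$ explicit where the paper leaves it implicit.
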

This implies of course that $\delta(\mu) \ge \frac{1}{2} \dim_\mathrm{H}(\partial X) > 0$. In particular, since $\mu$-almost every subgroup contains hyperbolic elements  $\mu$ must be non-elementary. 
The strict inequality $\delta(\mu) >\frac{1}{2} \dim_\mathrm{H}(\partial X)$ as in the statement of Theorem \ref{thm:main theorem} will be established in \S\ref{sec:invariant random subgroups of divergence type} below using Patteron--Sullivan theory. 
\begin{proof}
The ergodic decomposition theorem allows us to write the invariant random subgroup $\mu$ as a convex combination of ergodic invariant random subgroups. We may therefore assume without loss of generality that $\mu$ is ergodic.


 Fix a sufficiently small constant $ 0 < \kappa < 1$ so that Corollaries \ref{cor:quantitive recurence} and \ref{cor:quantitave series from ergodic theorem} hold true with respect to the probability measure preserving action of the lattice $\Gamma$ on  the Borel space $(\mathrm{Sub}(G),\mu)$. Let $V \subset \mathcal{H} \subset G$ be an open relatively compact  subset of hyperbolic elements as provided by Proposition \ref{prop:existence of a neighborhood having positive probability}, so that $\mu(\Omega_V) > 1-\kappa$ where
$$ \Omega_V = \{ \text{$H \le G$ closed} \: : \: H \cap V \neq \emptyset \} \subset \mathrm{Sub}(G).$$

To simplify notation let $\delta = \dim_\mathrm{H}(\partial X)$.
Observe that every  element $h \in G$ with $E_\Gamma(h,V) \neq \emptyset$ is necessarily hyperbolic. According to Lemma \ref{lem:a bound for the shortest element in hyperbolic groups} every such element $h$ satisfies
$$   e^{- \delta  d_X(\gamma_h o,o) } \ge \frac{1}{\alpha} \mathcal{P}_\Gamma(\delta; h, V)   $$
for some particular  choice of $\gamma_h \in E_\Gamma(h,V)$, where $\alpha > 0$ is a constant independent of $h$.
We obtain the following estimate for the Poincare series $\mathcal{P}_\Delta(\delta/2)$ of any discrete subgroup $\Delta$ of $G$
$$ \mathcal{P}_\Delta(\delta/ 2 ) = \sum_{h \in \Delta} e^{-\frac{\delta}{2} d_X(ho,o) } \ge \frac{1}{ \beta} \sum_{\mathclap{\substack{h\in\Delta \\ E_\Gamma(h,V) \neq \emptyset}}} e^{- \delta  d_X(\gamma_h o,o) } \ge \frac{1}{\alpha \beta} \sum_{h\in\Delta} \mathcal{P}_\Gamma(\delta; h, V).   $$
In the above estimate we used Lemma  \ref{lemma:introducing one half}  to compare  $d_X(ho, o)$ and $d_X(\gamma_h o, o)$, with the constant   $\beta > 0$ being as in that lemma.

The validity of the condition  $\gamma \Delta \gamma^{-1} \in \Omega_V$ for some element $\gamma \in \Gamma$ implies  by definition  that $\gamma h \gamma^{-1} \in V$ for some $h \in \Delta$, and therefore
$$\sum_{h\in\Delta} \mathcal{P}_\Gamma(\delta; h, V) \ge  \mathcal{P}_\Gamma(\delta;\Delta, \Omega_V). $$ 
According to Corollary \ref{cor:quantitave series from ergodic theorem} there is a Borel subset $Y \subset \mathrm{DSub}(G)$ with $\mu(Z) > \kappa$ and such that $\mathcal{P}_\Gamma(\delta;\Delta, \Omega_V)   = \infty  $ for every subgroup $\Delta \in Y$. Therefore  $\mathcal{P}_\Delta(\delta/ 2 ) = \infty$ holds true for every $\Delta \in Y$ and hence $\mu$-almost surely by the ergodicity of $\mu$.
\end{proof}



\section{Quasiconformal densities}

Let $X$ be a proper geodesic Gromov hyperbolic metric space.



\subsection*{Busemann functions and shadows}

We recall some basic definitions and facts.

\begin{definition}
\label{def:shadow}
 Given any two points $x,y \in X$ the \emph{shadow}  from $x$ of the ball of radius $R > 0$  around $y$  is
 $$\mathcal{S}_R(x,y) = \{ \xi \in \partial X \: : \: \text{any geodesic ray from $x$ to $\xi$ intersects $B_R(y)$} \}. $$
\end{definition}

Fix a uniform lattice $\Gamma$ in $\mathrm{Isom}(X)$ and a basepoint $ o \in X$.

\begin{prop}
\label{prop:covering lemma for shadows}
For all sufficiently large  radii $k, R > 0$ there is an integer $p  \in \mathbb{N}$ so that for every $ r > 0$
$$   \bigcup_{\gamma \in A_\Gamma[r, r+k]} \mathcal{S}_R(o,\gamma o) = \partial X$$   
and moreover each ideal point $\xi \in \partial X$ belongs to at most $p$ of these shadows.
\end{prop}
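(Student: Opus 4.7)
The plan is to establish the two assertions — covering and bounded multiplicity — using only the cocompactness of $\Gamma$ together with the thin-triangles geometry of $X$.

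For the covering, I would fix a covering radius $D$ for the lattice, so that $\Gamma \cdot B_X(o,D) = X$. Given $\xi \in \partial X$ and $r > 0$, pick a geodesic ray $c_\xi : [0,\infty) \to X$ from $o$ to $\xi$ and apply the covering property to the point $c_\xi(r + k/2)$: there is $\gamma \in \Gamma$ with $d_X(\gamma o, c_\xi(r+k/2)) \le D$. The triangle inequality then pins $d_X(\gamma o, o)$ to the interval $[r+k/2-D, r+k/2+D]$, which is contained in $[r,r+k]$ as soon as $k > 2D$. Because the chosen geodesic passes within $D$ of $\gamma o$, and any other geodesic from $o$ to $\xi$ stays within Hausdorff distance $\delta'$ of it (the standard thin-triangles bound for ideal geodesics), every such geodesic passes within $D + \delta'$ of $\gamma o$; hence $\xi \in \mathcal{S}_R(o, \gamma o)$ for any $R \ge D + \delta'$.

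For the multiplicity bound, suppose $\gamma \in A_\Gamma[r, r+k]$ and $\xi \in \mathcal{S}_R(o, \gamma o)$. Then some geodesic $c_\xi$ passes within $R$ of $\gamma o$ at some parameter $t_\gamma$, and the triangle inequality gives $|t_\gamma - d_X(o,\gamma o)| \le R$, so that $t_\gamma \in [r-R, r+k+R]$. Consequently each such $\gamma o$ lies in the ball $B_X(c_\xi(r),\, k+2R)$ of the fixed radius $k + 2R$ centered at a single point of $X$. So the multiplicity at $\xi$ is bounded by the maximal number of orbit points $\gamma o$ that can fall inside any ball in $X$ of radius $k + 2R$.

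To finish, I would translate that ball near $o$ by a lattice element, which costs at most an additional $D$ in the radius: if $\gamma_0 \in \Gamma$ satisfies $d_X(\gamma_0 o, c_\xi(r)) \le D$, then the relevant $\gamma$ are exactly those for which $\gamma_0^{-1}\gamma o$ lies in $B_X(o,\, k+2R+D)$. Proper discontinuity of the $\Gamma$-action (equivalently Lemma~\ref{prop:uniform lattice has discrete orbits} with $g_1 = e$, $g_2 = \gamma_0$ and $C = B_X(o, k+2R+D)$) bounds the number of such elements by an integer $p$ depending only on $k, R$ and $\Gamma$, independent of $\xi$ and $r$. The delicate point throughout is precisely this uniformity in $\xi$ and $r$, and the lattice-translation trick is what secures it.
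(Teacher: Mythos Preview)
Your proposal is correct and follows essentially the same route as the paper: both use cocompactness of $\Gamma$ to place a lattice point near a chosen parameter on a geodesic ray to $\xi$, invoke the $\delta'$-fellow-travelling of rays with common endpoints to handle all geodesics, and for the multiplicity confine all relevant $\gamma o$ to a single ball of fixed radius before appealing to Lemma~\ref{prop:uniform lattice has discrete orbits}. Your constants are marginally sharper (centering at $c_\xi(r+k/2)$ rather than $c_\xi(r+D)$ and needing only $R \ge D+\delta'$), but the structure is identical; just be sure in the multiplicity step that the geodesic $c_\xi$ is the \emph{same} fixed ray for all $\gamma$, which is legitimate since the definition of $\mathcal{S}_R(o,\gamma o)$ forces every geodesic from $o$ to $\xi$ to meet $B_R(\gamma o)$.
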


\begin{proof}
Let $D > 0$ be a sufficiently large radius so that $\Gamma B_X(o,D) = X$. Consider an ideal point  $\xi \in \partial X$ and let $l : \mathbb{R}_{\ge 0} \to X$ be any  geodesic ray with $l(0) = o$ and $l(\infty) = \xi$.  There exists some element $\gamma \in \Gamma$ so that $d_{X}(\gamma o,l(r + D))\leq D$. Therefore $l(\mathbb{R}_{\ge 0})$ intersects $B_{X}(\gamma o,2D)$. Recall that any other geodesic ray from $o$ to $\xi$ lies in the $\delta'$ neighborhood of $l$ where $\delta'$  is a constant. Taking $k  = 2D$ and $R = 2D + \delta'$, we find that any ideal point  $\xi \in \partial  X$ lies in some shadow $S_{R}(o,\gamma o)$ with $ \gamma \in A_\Gamma[r,r+k]$ as required. Moreover, observe that any other shadow $S_R(o,\gamma' o)$ with $\gamma' \in A_\Gamma[r,r+k]$ and containing $\xi$ must satisfy $d_X(\gamma' o, l(r+D)) < R + D$. 
As $\Gamma$ is a uniform lattice, the existence of an upper bound $p \in \mathbb{N}$ on the number of such shadows containing a given ideal point follows from Lemma \ref{prop:uniform lattice has discrete orbits}.
%
%
\end{proof}

\begin{prop}[{\cite[Lemma 6.3]{Coornaert}}]
\label{prop:relation between shadows and visual metric}
Assume that $\mathrm{Isom}(X)$ admits a uniform lattice. Let $\rho_o$ be a visual metric on $\partial X$ with respect to the basepoint $o \in X$. Given a point  $x \in X$ distinct from $o$ let $\xi_x \in  \partial X$ be the endpoint of any geodesic ray starting at $o$ and passing through $x$. Then 
$$ \sup \{ \rho_o(\zeta, \xi_x) \: : \: \text{$x$ distinct from $o$ and $\zeta \in \partial X \setminus S_R(x,o)$} \} $$ 
tends to $0$ as $R \to \infty$.
\end{prop}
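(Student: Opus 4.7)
My plan is to reduce the statement to a uniform lower bound on the Gromov product $(\xi_x \cdot \zeta)_o$. Since any visual metric with parameter $a$ is comparable up to multiplicative constants to $e^{-a(\xi \cdot \eta)_o}$, it suffices to prove that
$$
(\xi_x \cdot \zeta)_o \ge R - C
$$
uniformly over all admissible $x$ and $\zeta$, for some constant $C$ depending only on the hyperbolicity constant $\delta$. I first observe that the supremum is vacuous unless $d_X(o, x) \ge R$: if $x$ lies in $B_X(o, R)$ then every geodesic ray from $x$ begins inside $B_X(o, R)$, so $S_R(x, o) = \partial X$ and its complement is empty. Hence I may assume $d_X(o, x) \ge R$.

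Next I set up the picture. The condition $\zeta \in \partial X \setminus S_R(x, o)$ provides a geodesic ray $\eta$ from $x$ to $\zeta$ satisfying $d_X(o, \eta(t)) > R$ for every $t \ge 0$. Let $\gamma_1$ be a geodesic ray from $o$ passing through $x$ and ending at $\xi_x$, and let $\gamma_2$ be any geodesic ray from $o$ to $\zeta$. The three paths $\gamma_1|_{[0, d_X(o, x)]}$, $\eta$ and $\gamma_2$ form the sides of a geodesic triangle with two finite vertices and one ideal vertex, which is $\delta'$-thin for some constant $\delta' = \delta'(\delta)$.

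I then exploit thinness on the initial segment of $\gamma_2$. Pick any $t \in [0, R - \delta')$ and set $p = \gamma_2(t)$. Since $d_X(o, p) = t < R - \delta'$ while every point of $\eta$ lies at distance strictly greater than $R$ from $o$, the point $p$ cannot be within $\delta'$ of $\eta$, so by thinness it must be within $\delta'$ of some point $\gamma_1(s)$ on the segment from $o$ to $x$. The triangle inequality with basepoint $o$ gives $|s - t| = |d_X(o, \gamma_1(s)) - d_X(o, \gamma_2(t))| \le \delta'$, so $d_X(\gamma_1(t), \gamma_2(t)) \le 2\delta'$. Thus the rays $\gamma_1$ and $\gamma_2$ are $2\delta'$-fellow travelers along their initial segment of length $R - \delta'$.

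The last step is the standard fact that in a Gromov hyperbolic space, fellow traveling of two geodesic rays from a common basepoint $o$ along an initial segment of length $T$ forces the Gromov product of their endpoints to be at least $T - C(\delta)$; this converts the previous bound into $(\xi_x \cdot \zeta)_o \ge R - C$, which combined with the visual metric comparison finishes the proof. The main technical subtlety is this final conversion between fellow traveling and the boundary Gromov product, but it is a routine fact in Gromov hyperbolic geometry and should be the only place where the precise limit definition of the boundary Gromov product enters.
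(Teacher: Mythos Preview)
Your proof is correct. The paper itself does not supply a proof of this proposition --- it simply quotes it as \cite[Lemma~6.3]{Coornaert} --- so there is no in-paper argument to compare against; your argument is the standard one and would be at home in Coornaert's setting.

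Two minor remarks. First, a cosmetic slip: with the paper's conventions the visual metric is comparable to $a^{-(\xi\cdot\eta)_o}$ (equivalently to $a^{-d_X(o,l_{\xi,\eta}(\mathbb{R}))}$), not to $e^{-a(\xi\cdot\eta)_o}$; this has no effect on the logic. Second, the final step you flag as ``routine'' really is: for any $s,t \ge T := R-\delta'$ the triangle inequality through the point $\gamma_1(T)$ gives
\[
d_X(\gamma_1(s),\gamma_2(t)) \le (s-T) + 2\delta' + (t-T),
\]
hence $(\gamma_1(s)\cdot\gamma_2(t))_o \ge T-\delta'$, and taking $\liminf$ over $s,t\to\infty$ (which agrees with the boundary Gromov product up to an additive constant depending only on $\delta$) yields $(\xi_x\cdot\zeta)_o \ge R - C(\delta)$ as you claim. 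Note also that the hypothesis that $\mathrm{Isom}(X)$ admits a uniform lattice is not actually used in the argument; the statement holds for any proper geodesic Gromov hyperbolic $X$.
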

\begin{definition}
The \emph{Busemann function} $\beta_\xi : X \times X \to \mathbb{R}$ associated to the ideal point $\xi \in \partial X$ is given by
$$\beta_{\xi}(x,y)=\lim \sup_{z\to \xi} \left(d_{X}(x,z)-d_{X}(y,z)\right)$$
(here  $z \in X$ is a point tending to $\xi$ along any geodesic ray with endpoint $\xi$).
\end{definition}

\begin{lemma}[{\cite[Lemma 6.2]{Coornaert}}]
\label{prop:comparison between shadow and buseman}
Let $x,y \in X$ be a pair of distinct points and $R > 0$ a radius. Then for every $\xi \in \mathcal{S}_R(x,y)$ we have that
$$ d_X(x,y) - 2R \le \beta_\xi(x,y) \le d_X(x,y). $$
\end{lemma}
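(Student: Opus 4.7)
The plan is to prove the two inequalities separately using only the triangle inequality and the defining property of the shadow $\mathcal{S}_R(x,y)$, with no recourse to hyperbolicity.

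For the upper bound $\beta_\xi(x,y) \le d_X(x,y)$: the triangle inequality $d_X(x,z) \le d_X(x,y) + d_X(y,z)$ rearranges to $d_X(x,z) - d_X(y,z) \le d_X(x,y)$ for every $z \in X$. Taking $\limsup$ as $z \to \xi$ along any geodesic ray terminating at $\xi$ yields the bound immediately.

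For the lower bound $\beta_\xi(x,y) \ge d_X(x,y) - 2R$, I exploit the shadow condition to choose a favorable sequence $z \to \xi$. Since $\xi \in \mathcal{S}_R(x,y)$, there exists a geodesic ray $\gamma : [0,\infty) \to X$ parametrized by arc length with $\gamma(0) = x$, $\gamma(\infty) = \xi$, and some $t_0 \ge 0$ satisfying $d_X(\gamma(t_0), y) \le R$. For $t > t_0$ set $z = \gamma(t)$; then $d_X(x,z) = t$ while the triangle inequality through $\gamma(t_0)$ gives
\[
d_X(y,z) \le d_X(y,\gamma(t_0)) + d_X(\gamma(t_0),z) \le R + (t - t_0).
\]
Subtracting,
\[
d_X(x,z) - d_X(y,z) \ge t_0 - R.
\]
Applying the triangle inequality once more, $t_0 = d_X(x,\gamma(t_0)) \ge d_X(x,y) - d_X(y,\gamma(t_0)) \ge d_X(x,y) - R$, so the right-hand side is at least $d_X(x,y) - 2R$ uniformly in $t$. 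Taking $\limsup$ along this particular ray (which is bounded below by the $\limsup$ over all admissible sequences, since $\limsup$ is a supremum) gives $\beta_\xi(x,y) \ge d_X(x,y) - 2R$.

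There is essentially no obstacle here: the lemma is a direct consequence of the triangle inequality, and the only subtlety is choosing the correct geodesic ray supplied by the shadow condition to realize the lower bound inside the $\limsup$.
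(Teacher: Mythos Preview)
The paper does not supply its own proof of this lemma; it is simply cited from \cite[Lemma~6.2]{Coornaert}. Your argument is the standard one and is correct in substance: both inequalities are pure triangle-inequality facts, and for the lower bound you correctly exploit the geodesic ray from $x$ to $\xi$ furnished by the shadow condition.

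One expository slip: your parenthetical justification in the final step is phrased backwards. You write that the $\limsup$ along your chosen ray ``is bounded below by the $\limsup$ over all admissible sequences''; the inequality goes the other way. The correct reasoning is that your ray $\gamma(t)\to\xi$ is one admissible way of letting $z\to\xi$, so
\[
\beta_\xi(x,y)\;\ge\;\limsup_{t\to\infty}\bigl(d_X(x,\gamma(t))-d_X(y,\gamma(t))\bigr)\;\ge\;d_X(x,y)-2R.
\]
(In fact, since you showed the difference is $\ge d_X(x,y)-2R$ for \emph{all} $t>t_0$, even the $\liminf$ along this ray suffices.) This does not affect the validity of the argument, only the wording.
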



\subsection*{Quasiconformal densities}

Consider a fixed non-elementary discrete  subgroup $\Gamma$ of $\mathrm{Isom}(X)$. We use below the asymptotic notation introduced on p. \pageref{subs:asymptotic notation} of  \S\ref{sec:quantitave recurrence}. We use $\|\mu\|$ to denote the total mass a finite positive Borel measure.

\begin{definition}
\label{def:conformal density}
A  $\Gamma$-\emph{quasiconformal density} of dimension $s \ge 0$ and distortion $d \ge 1$  is a family $\{\nu_{x}\}_{x\in X}$ of  finite Borel measures on $\partial X$ such that
\begin{enumerate}
\item the support of $\nu_x$ is  the limit set $\Lambda(\Gamma) $ for all points $x \in X$, 
\item
$\frac{d\nu_{x}}{d\nu_{y}}(\xi)\asymp_{d} e^{s\beta_{\xi}(y,x)}$
for $\nu_{y}$-almost every  point  $\xi \in \partial X$, and

\item
$g_{*}\nu_{x} \asymp_{d} \nu_{gx}$ for all elements $g\in \Gamma$ and points $x \in X$. 
\end{enumerate}
\end{definition}

Recall that $(g_{*}\nu)(A)=\nu(g^{-1}A)$. As long as   $\dim_\text{H}(\partial X) <\infty$, any discrete subgroup $\Gamma$  admits a quasiconformal density of dimension $\delta(\Gamma)$.
If $\Gamma$ is of divergence type then a $\Gamma$-quasiconformal density  of dimension $\delta(\Gamma)$ is ergodic and essentially unique up to a bounded multiplicative factor, in the following sense.

\begin{prop}\label{prop:uniquessness of conformal density} 
Assume that $\Gamma$ has divergence type and let   $\{\nu_{x}\}_{x\in X}$ and $\{\eta_{x}\}_{x\in X}$ be two $\Gamma$-quasiconformal densities of the same dimension $s \ge 0$ and distortion $d \ge 1$.
Then for all points $x \in  X$ 
$$ \frac{\|\nu_o\|}{\|\eta_o\|} \asymp_{d^4} \frac{\|\nu_x\|}{\|\eta_x\|} $$
\end{prop}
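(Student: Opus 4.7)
The plan is to reduce, via quasiconformality, to a near-uniqueness statement for the normalized measures on $\partial X$, and then to establish that statement using Sullivan's shadow lemma together with the divergence type hypothesis. First, integrating the Radon--Nykodim estimate in property (2) of the definition of a quasiconformal density gives
$$\|\nu_x\| \asymp_d \int_{\partial X} e^{s\beta_\xi(o,x)}\, d\nu_o(\xi), \qquad \|\eta_x\| \asymp_d \int_{\partial X} e^{s\beta_\xi(o,x)}\, d\eta_o(\xi),$$
so that
$$\frac{\|\nu_x\|}{\|\eta_x\|} \asymp_{d^2} \frac{\int e^{s\beta_\xi(o,x)}\, d\nu_o}{\int e^{s\beta_\xi(o,x)}\, d\eta_o}.$$
It therefore suffices to show that the normalized measures $\nu_o/\|\nu_o\|$ and $\eta_o/\|\eta_o\|$ are comparable as Borel measures on $\partial X$ with distortion at most $d^2$, because then the normalized integrals of any nonnegative test function will be comparable within $d^2$, and combining the two bounds will yield the target $\asymp_{d^4}$.

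For the normalization step I would appeal to Sullivan's shadow lemma, which for any $\Gamma$-quasiconformal density of dimension $s$ and sufficiently large $R > 0$ provides
$$\nu_o\bigl(\mathcal{S}_R(o,\gamma o)\bigr) \asymp \|\nu_o\|\, e^{-s d_X(o,\gamma o)}$$
for every $\gamma \in \Gamma$, with implicit constants depending only on $d$, $R$, and $s$, and analogously for $\eta$. Dividing the two shadow estimates shows that the normalized measures assign comparable weight to every orbit-shadow $\mathcal{S}_R(o,\gamma o)$. Proposition~\ref{prop:covering lemma for shadows} then partitions $\partial X$ into cells built from such shadows with bounded overlap, so a standard approximation argument promotes this cell-by-cell comparability to a comparison of the full measures on all Borel sets, yielding the required $\asymp_{d^2}$ bound. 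Combined with the reduction above, this gives $\|\nu_x\|/\|\eta_x\| \asymp_{d^4} \|\nu_o\|/\|\eta_o\|$ for every $x \in X$.

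The main obstacle is this second step, which is where the divergence type hypothesis enters essentially. In the classical case $d = 1$, uniqueness of the Patterson--Sullivan measure up to scalar follows from ergodicity of the $\Gamma$-action on $(\partial X, \nu_o)$, itself a consequence of divergence type via the Hopf--Tsuji--Sullivan dichotomy, applied to the $\Gamma$-invariant Radon--Nykodim derivative between two conformal densities. In the quasiconformal setting the invariance of this derivative is only approximate, so a direct ergodic conclusion is unavailable and one must work quantitatively through the shadow lemma. The technical heart of the proof lies in tracking how the distortion constants propagate through the shadow estimate and through the covering argument, with divergence type being precisely the property that makes the normalization $\nu_o(\mathcal{S}_R(o,\gamma o)) \asymp \|\nu_o\|\, e^{-s d_X(o,\gamma o)}$ hold uniformly in $\gamma$ simultaneously for both densities.
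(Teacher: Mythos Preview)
Your reduction to the pointwise comparison of the normalized measures at $o$ is correct and essentially matches the paper: once one knows $\frac{d\nu_o}{d\eta_o}(\xi) \asymp_{d^2} \frac{\|\nu_o\|}{\|\eta_o\|}$ for all $\xi$, combining with property~(2) of Definition~\ref{def:conformal density} at the point $x$ and integrating gives the $\asymp_{d^4}$ claim. The paper does exactly this, citing \cite[Theorem~5.2]{MYJ} as a black box for the pointwise estimate and then performing a two-line manipulation.

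The gap lies in your proposed proof of that pointwise estimate via the shadow lemma. The constant in the shadow estimate $\nu_o(\mathcal{S}_R(o,y)) \asymp_C \|\nu_o\|\, e^{-s d_X(o,y)}$ depends on $R$, on the hyperbolicity constant of $X$, and on a non-atomicity parameter of the particular density --- not solely on the distortion $d$. Dividing two such estimates and passing to a differentiation limit therefore yields comparability with some constant $C'$ unrelated to $d$, so the stated $d^4$ does not follow; you would only obtain the proposition with an unspecified constant (which, to be fair, would suffice for the later applications, but is not what is asserted). Note also that Proposition~\ref{prop:covering lemma for shadows} is formulated for a uniform lattice, whereas here $\Gamma$ is merely a discrete group of divergence type. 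Finally, your localization of the divergence hypothesis is off: the shadow lemma lower bound needs only non-elementarity (to rule out atoms); divergence type enters instead through \emph{ergodicity} of the $\Gamma$-action on $(\partial X,\eta_o)$. That is how \cite{MYJ} pin down the sharp $d^2$: the Radon--Nykodim derivative $\frac{d\nu_o}{d\eta_o}$ is $\Gamma$-quasi-invariant with distortion $d^2$ by properties~(2) and~(3), and ergodicity forces any such quasi-invariant function to be $d^2$-close to its mean.
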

\begin{proof}
According to \cite[Theorem 5.2]{MYJ} we have the estimate
$$ \frac{\nu_o(\xi)}{\eta_o(\xi)} \asymp_{d^2} \frac{\|\nu_o\|}{\|\eta_o\|} $$
for all boundary points $\xi \in \partial X$. Combined with property (2) given in Definition \ref{def:conformal density} of quasiconformal densities this implies
$$ \frac{\nu_o(\xi)}{\nu_x(\xi)} \asymp_{d^2} \frac{\eta_o(\xi)}{\eta_x(\xi)}  \Rightarrow  \frac{\nu_o(\xi)}{\eta_o(\xi)} \asymp_{d^2} \frac{\nu_x(\xi)}{\eta_x(\xi)}  \Rightarrow  \frac{\|\nu_o\|}{\|\eta_o\|} \asymp_{d^4 } \frac{\nu_x(\xi)}{\eta_x(\xi)}.$$
The result follows by integrating the above estimate over all points $\xi$ in $\partial X$.
\end{proof}

The following proposition allows us to relate  quasiconformal densities of conjugate subgroups in a straightforward manner.
\begin{prop}
\label{prop:on conjugating a conformal density}
Fix an element $g \in \mathrm{Isom}(X)$. If $\{\nu_{x}\}_{x\in X}$ is a $\Gamma$-quasiconformal density then $\{g_{*}\nu_{g^{-1}x}\}_{x\in X}$ is a $(g\Gamma g^{-1})$-quasiconformal density of the same dimension and distortion.
\end{prop}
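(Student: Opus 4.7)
The plan is to verify directly that the family $\eta_x := g_*\nu_{g^{-1}x}$ satisfies the three defining properties of a $(g\Gamma g^{-1})$-quasiconformal density listed in Definition \ref{def:conformal density}, with the same dimension $s$ and distortion $d$. Each property reduces to a short calculation using (i) the fact that pushforward along a homeomorphism transports Radon--Nikodym derivatives by pre-composition with the inverse, and (ii) the standard fact that Busemann functions and the limit set are $\mathrm{Isom}(X)$-equivariant, i.e.\ $\beta_{g\xi}(gx, gy) = \beta_\xi(x,y)$ and $\Lambda(g\Gamma g^{-1}) = g\Lambda(\Gamma)$ for any $g \in \mathrm{Isom}(X)$.

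For property (1), since $g$ acts as a homeomorphism of $\partial X$ and $\nu_{g^{-1}x}$ is supported on $\Lambda(\Gamma)$, the measure $\eta_x = g_*\nu_{g^{-1}x}$ is supported on $g\Lambda(\Gamma) = \Lambda(g\Gamma g^{-1})$. For property (2), I would compute
\[
\frac{d\eta_x}{d\eta_y}(\xi) = \frac{d(g_*\nu_{g^{-1}x})}{d(g_*\nu_{g^{-1}y})}(\xi) = \frac{d\nu_{g^{-1}x}}{d\nu_{g^{-1}y}}(g^{-1}\xi) \asymp_d e^{s\beta_{g^{-1}\xi}(g^{-1}y,\,g^{-1}x)} = e^{s\beta_\xi(y,x)},
\]
where the last equality is the isometry-invariance of Busemann functions applied to $g$.

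For property (3), given any $\gamma' = g\gamma g^{-1}$ with $\gamma \in \Gamma$, I would compute
\[
\gamma'_*\eta_x = (g\gamma g^{-1})_*(g_*\nu_{g^{-1}x}) = g_*(\gamma_*\nu_{g^{-1}x}) \asymp_d g_*\nu_{\gamma g^{-1}x} = g_*\nu_{g^{-1}(\gamma' x)} = \eta_{\gamma' x},
\]
where the middle $\asymp_d$ is the $\Gamma$-quasiconformality of the original family $\{\nu_x\}$ applied to $\gamma$ at the point $g^{-1}x$.

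There is no real obstacle here; the proposition is essentially bookkeeping, and the only non-formal input is the $\mathrm{Isom}(X)$-equivariance of Busemann functions, which is immediate from their definition as a $\limsup$ of isometry-invariant distance differences.
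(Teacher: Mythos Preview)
Your proof is correct and follows essentially the same approach as the paper: both verify Definition~\ref{def:conformal density} directly, with identical computations for properties (2) and (3). The paper omits the verification of property (1), which you include; this is a minor addition.
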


\begin{proof}
Let $\{\nu_x\}_{x\in X}$ be a $\Gamma$-quasiconformal density of dimension $s \ge 0$ and distortion $d \ge 1$.  Note that
$$\frac{dg_*  \nu_{g^{-1}x}}{ d g_* \nu_{g^{-1} y}}(\xi)=\frac{d\nu_{g^{-1}x}}{ d\nu_{g^{-1} y}}(g^{-1}\xi)\asymp_d e^{s\beta_{g^{-1}\xi}(g^{-1}  y,g^{-1}  x)} = e^{s\beta_{\xi}(y,x)}$$
Let $g\gamma g^{-1}$  be an arbitrary element of $g\Gamma g^{-1}$. Therefore
$$ (g \gamma g^{-1})_* g_*\nu_{g^{-1}x} = (g \gamma)_* \nu_{g^{-1}x} \asymp_d g_* \nu_{\gamma g^{-1} x} = g_* \nu_{g^{-1} (g \gamma g^{-1}) x}$$
It follows that $\{g_* \nu_{g^{-1} x}\}_{x \in X}$ is a $(g\Gamma g^{-1})$-quasiconformal density of the same dimension and distortion, as required.

\end{proof}

\begin{remark}
\label{rem:quasi becoming strict 1}
In the special case that the distortion is equal to one, a quasiconformal density is called  a \emph{conformal density}. In that case all of the above statements hold in a strict sense, and not just up to a multiplicative constant. This happens for example if $X$ is a $\mathrm{CAT}(-1)$-space. However this assumption  does not simplify the proof in any essential way. 
\end{remark}





\section{Invariant random subgroups of divergence type}
\label{sec:invariant random subgroups of divergence type}

Let $\mu$ be a discrete ergodic invariant random \emph{of divergence type}.

\subsection*{The Poincare quasi-cocycle}

There exists a \emph{$\mu$-measurable family $\nu$ of quasiconformal densities} of dimension $\delta(\mu)$. This means that $\nu$ is a $\mu$-measurable map
$$ \nu : \mathrm{DSub}(G) \times X  \to \mathcal{M}(\partial X), \quad \nu : (\Gamma,x)  \mapsto \nu_x^\Gamma $$ 
so that  the family $\{\nu^\Gamma_x\}_{x\in X}$ is a $\Gamma$-quasiconformal density of dimension $\delta(\mu)$ and some fixed distortion $d \ge 1$ for $\mu$-almost every discrete subgroup $\Gamma$. We  will additionally assume without loss of generality that the family $\nu$ is normalized so that  $$\|\nu^\Gamma_o\| = \nu^\Gamma_o(\partial X) = 1.$$ 
Since $\mu$ is of divergence type the existence of a $\mu$-measurable quasiconformal density is guaranteed by Proposition \ref{prop:existance of a measurable quasiconformal density} of the appendix.


Associated to the $\mu$-measurable quasiconformal density is a \emph{Poincare quasi-cocycle} $\pi_\nu$. This is simply the map 
$$\pi_\nu : G \times \mathrm{DSub}(G) \to \mathbb{R}_{>0}, \quad \pi_\nu(g,\Gamma)= \|\nu^{\Gamma}_{g^{-1}o} \| = \nu^{\Gamma}_{g^{-1}o}(\partial X). $$
Since $\mu$ is of divergence type we have by Propositions \ref{prop:uniquessness of conformal density} and \ref{prop:on conjugating a conformal density} that
 $$ g_{*}\nu^\Gamma_{g^{-1}x} \asymp_{d^{2}}  \frac{\| g_* \nu_{g^{-1}x}^\Gamma \| }{\| \nu^{g\Gamma g^{-1}}_{x}\| } \nu^{g\Gamma g^{-1}}_{x} \asymp_{d^4} \frac{\| g_* \nu_{g^{-1}o}^\Gamma \| }{\| \nu^{g\Gamma g^{-1}}_{o}\| } \nu^{g\Gamma g^{-1}}_{x} =  \|  \nu_{g^{-1}o}^\Gamma \|  \nu^{g\Gamma g^{-1}}_{x}$$
 for every element $g \in G$ and all points $x \in X$. In other words, the Poincare quasi-cocycle satisfies the relation
$$ g_{*}\nu^\Gamma_{g^{-1}x} \asymp_{d^{6}} \pi_\nu(g,\Gamma) \nu^{g\Gamma g^{-1}}_{x}. $$
This implies that a  cocycle relation holds for $\pi_\nu$ in an approximate sense, namely 
$$ \pi_\nu(gh,\Gamma) \asymp_{d^{12}}  \pi_\nu(g,h\Gamma h^{-1}) \pi_\nu(h,\Gamma) $$
and in particular
$$ \pi_\nu(g^{-1}, g \Gamma g^{-1}) \pi_\nu(g,\Gamma) \asymp_{d^{12}} 1$$
for every pair of elements $g, h \in G$ and $\mu$-almost every closed subgroup $\Gamma$.





\begin{remark}
	\label{rem:quasi becoming strict 2}
If every discrete subgroup of divergence type in $\mathrm{Isom}(X) $ admits a conformal (rather than quasiconformal) density then $\pi_v$ is a multiplicative cocycle in the usual sense. Whenever  $G$ has Kazhdan's property $(T)$ any such cocycle is trivial \cite[Theorem 9.1.1]{Zimmer} and  many of our arguments can be somewhat simplified.
\end{remark}

\subsection*{Sullivan's shadow lemma for invariant random subgroups}


Consider a cocompact subgroup $G$ of $\mathrm{Isom}(X)$ and assume that $\mu$ is an ergodic discrete non-elementary invariant random subgroup of divergence type in $G$. Fix a $\mu$-measureable quasiconformal density $\nu$. In addition fix a basepoint $o \in X$.

\begin{prop}[Shadow lemma]
\label{prop:shadow lemma for IRS}
Fix an arbitrary small constant $0 < \kappa < 1$. There is a Borel subset $Y \subset \mathrm{DSub}(G)$ with $\mu(Y) > 1- \kappa $
 such that for every sufficiently large radius $R > 0$ there is a constant $c = c(R)> 0$ with
 $$  \nu_o^\Gamma( \mathcal{S}_R(o, g^{-1} o)) \asymp_c \pi_\nu(g,\Gamma)  e^{-\delta(\mu)d_X( o, go)} $$
for every $g \in G$ 
and $\mu$-almost every closed subgroup $\Gamma$ with $g \Gamma g^{-1} \in Y$.
\end{prop}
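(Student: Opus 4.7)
The plan is to combine the Radon--Nikodym property of $\nu$, the quasi-cocycle identity for $\pi_\nu$ established earlier in this section, and a uniform non-atomicity estimate for $\nu_o^\Lambda$, in order to isolate the desired exponential decay. The shadow mass will be rewritten as a product of three factors, each handled separately.

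For the first factor, property (2) of Definition \ref{def:conformal density} combined with Lemma \ref{prop:comparison between shadow and buseman} implies that $\beta_\xi(g^{-1}o, o) = -d_X(o, go) + O(R)$ for every $\xi \in \mathcal{S}_R(o, g^{-1}o)$, where I use that $d_X(o, g^{-1}o) = d_X(o, go)$. Integrating against $\nu^\Gamma_{g^{-1}o}$ restricted to the shadow gives
$$
\nu^\Gamma_o(\mathcal{S}_R(o, g^{-1}o)) \asymp_{c_1(R)} e^{-\delta(\mu) d_X(o, go)}\, \nu^\Gamma_{g^{-1}o}(\mathcal{S}_R(o, g^{-1}o)).
$$
For the second factor, the identity $g^{-1}\mathcal{S}_R(go, o) = \mathcal{S}_R(o, g^{-1}o)$ and the quasi-cocycle relation $g_*\nu^\Gamma_{g^{-1}o} \asymp_{d^6} \pi_\nu(g, \Gamma)\, \nu^{g\Gamma g^{-1}}_o$ yield
$$
\nu^\Gamma_{g^{-1}o}(\mathcal{S}_R(o, g^{-1}o)) \asymp_{d^6} \pi_\nu(g, \Gamma)\, \nu^\Lambda_o(\mathcal{S}_R(go, o)),
$$
where $\Lambda = g\Gamma g^{-1}$.

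Combining the two displays reduces the statement to finding a Borel set $Y$ with $\mu(Y) > 1 - \kappa$ and a constant $c_3 > 0$ so that $\nu^\Lambda_o(\mathcal{S}_R(go, o)) \asymp_{c_3} 1$ for all $g \in G$ and all $\Lambda \in Y$, provided $R$ is sufficiently large. The upper bound is trivial since $\nu_o^\Lambda$ is a probability measure. For the lower bound, Proposition \ref{prop:relation between shadows and visual metric} shows that $\partial X \setminus \mathcal{S}_R(go, o)$ is contained in a $\rho_o$-ball of radius $\varepsilon(R) \to 0$ about the endpoint of a geodesic ray from $o$ through $go$. It thus suffices to find $\varepsilon_0 > 0$ and $Y$ of measure $> 1 - \kappa$ so that
$$
\omega_\Lambda(\varepsilon_0) := \sup_{\xi \in \partial X} \nu^\Lambda_o(B_{\rho_o}(\xi, \varepsilon_0)) \le \tfrac{1}{2} \quad \text{for every } \Lambda \in Y,
$$
and then take $R$ large enough that $\varepsilon(R) \le \varepsilon_0$.

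Since $\mu$-almost every $\Lambda$ is discrete, non-elementary, and of divergence type, the Patterson--Sullivan density $\nu_o^\Lambda$ is non-atomic. A compactness argument on $\partial X$ (extract a convergent sequence of near-maximizing centers and contradict non-atomicity at the limit) shows that $\omega_\Lambda(\varepsilon) \to 0$ as $\varepsilon \to 0$ for $\mu$-a.e.\ $\Lambda$. Measurability of $\Lambda \mapsto \omega_\Lambda(\varepsilon)$ follows from measurability of $\nu$ and restricting the supremum to a countable dense subset of $\partial X$; an Egoroff-style argument then extracts $\varepsilon_0$ and $Y$. The main obstacle is precisely this uniformity, both over $g \in G$ (handled by Proposition \ref{prop:relation between shadows and visual metric}, which provides a modulus $\varepsilon(R)$ independent of $g$) and over $\Lambda \in Y$ (handled via the Egoroff extraction from pointwise non-atomicity, which crucially depends on the divergence-type hypothesis).
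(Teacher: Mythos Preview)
Your proof is correct and follows essentially the same strategy as the paper's: rewrite the shadow mass via the quasi-cocycle and the Radon--Nikodym/Busemann estimate, then reduce to a uniform lower bound on $\nu_o^\Lambda(\mathcal{S}_R(go,o))$ over a large Borel set $Y$, obtained from Proposition~\ref{prop:relation between shadows and visual metric} together with a compactness argument on $\partial X$. The only cosmetic differences are the order in which you apply the cocycle and the Busemann estimate, and that you invoke full non-atomicity of $\nu_o^\Lambda$ (valid for divergence-type groups) to get $\omega_\Lambda(\varepsilon)\to 0$, whereas the paper uses the weaker fact that no single boundary point carries full mass (valid already for non-elementary groups) to obtain a $\Gamma$-dependent threshold $\theta(\Gamma)>0$; both lead to the same Egoroff-type extraction of $Y$.
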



\begin{proof}
Recall that $\nu_o^\Gamma$ is normalized to be a probability measure and has full support on the limit set $\Lambda(\Gamma)$ for $\mu$-almost every subgroup $\Gamma$. 
 Since $\mu$ is non-elementary the limit set $\Lambda(\Gamma)$ is $\mu$-almost surely not  a single point and so
 $$\nu_o^\Gamma({\xi}) < \nu_o(\partial X) = 1$$
for every ideal point $\xi \in \partial X$ and $\mu$-almost every subgroup $\Gamma$. 

Let $\rho_o$ be a visual metric on the Gromov boundary $\partial X$ with respect to the basepoint $o\in X$. The compactness of $\partial X$ implies that   there is a sufficiently small constant $\theta(\Gamma) > 0$ depending on the subgroup $\Gamma$ such that
$$ \nu_o^\Gamma\left(\{\zeta \in\partial X \: : \: \rho_o(\zeta,\xi) > \theta(\Gamma) \} \right)   > \theta(\Gamma) $$ 
holds for every ideal point $\xi \in \partial X$. Since the family $\nu$ of quasiconformal densities is $\mu$-measurable the function $\theta$ can be chosen to be $\mu$-measurable on $\mathrm{DSub}(G)$.
Moreover by Proposition \ref{prop:relation between shadows and visual metric}
there is a constant $r(\Gamma) > 0$  depending only on  $\theta(\Gamma)$ so that for each point $x\in X$  there exists  an ideal point $\xi \in \partial X$ with 
$$ \{\zeta \in \partial X\: : \: \rho_o(\zeta,\xi) > \theta(\Gamma) \} \subset \mathcal{S}_r(x,o) $$
for every radius $r > r(\Gamma)$. We may assume the function $r$ to be $\mu$-measurable.


Fix a pair of constants $\Theta > 0$ sufficiently small and $R > 0$ sufficiently large so that the following Borel set
$$ Y = \{\Gamma \in \mathrm{DSub}(G) \: : \: \text{$\theta(\Gamma) > \Theta, r(\Gamma) < R$} \} $$
satisfies $\mu(Y) > 1- \kappa$. 

Fix an element $g \in G$.  Consider a discrete subgoup $\Gamma$  such that its conjugate $ g \Gamma g^{-1}$ belongs to $Y$. In particular 
$$  \nu^{g \Gamma g^{-1}}_o(\mathcal{S}_R(go,o)) > \Theta. $$
By the definition of the Poincare quasi-cocycle $\pi_\nu$ we have that 
$$
\nu_o^\Gamma(\mathcal{S}_R(o,g^{-1}o)) = \nu_o^\Gamma(g^{-1}\mathcal{S}_R(go,o)) \asymp_{d^6}  \pi_\nu(g,\Gamma) \nu_{g o}^{g \Gamma g^{-1}}(\mathcal{S}_R(go,o)). $$
where $d \ge 1$ is the distortion of the family $\nu$. To complete the proof of the shadow lemma we are required  to estimate 
\begin{align*}
\nu_{g o}^{g \Gamma g^{-1}}(\mathcal{S}_R(go,o)) &= \int_{\mathcal{S}_R(go,o)} \mathrm{d}\nu_{go}^{g \Gamma g ^{-1}}(\xi) = \\
&= \int_{\mathcal{S}_R(go,o)} \frac{\nu_{go}^{g \Gamma g ^{-1}}(\xi)}{\nu_{o}^{g \Gamma g ^{-1}}(\xi)}\mathrm{d}\nu_{o}^{g \Gamma g ^{-1}}(\xi) \asymp_d  \\
&\asymp_d \int_{\mathcal{S}_R(go,o)} e^{- \delta(\nu)\beta_\xi(go,o)}\mathrm{d}\nu_{o}^{g \Gamma g^{-1}}(\xi)
\end{align*}
 where we have used the properties of quasiconformal densities in the final equation, see  Definition \ref{def:conformal density}.
Finally, for every $\xi \in \mathcal{S}_R(go,o)$ we have that
$$d_X(go,o) - 2R \le \beta_\xi(go,o) \le d_X(go,o)$$ by Lemma \ref{prop:comparison between shadow and buseman}. This concludes the proof with the constant 
$$ c = d^7 \Theta^{-1}  e^{2\delta(\mu)R} .$$ 
\end{proof}

\subsection*{Recurrence and the Poincare quasi-cocycle}
Let $\Gamma$ be a uniform lattice in $G$. For every discrete subgroup $\Delta \in \mathrm{DSub}(G)$, Borel subset $Y \subset \mathrm{DSub}(G)$ and radius $r > 0$  we let
$$ E_{r,k}(\Delta, Y) = E_\Gamma(\Delta, Y) \cap A_\Gamma[r,r+k]$$
where $A_\Gamma[r,r+k]$ is an annulus in the group $\Gamma$ with respect to its action on $X$.

\begin{prop}
\label{prop:summing over a cocycle}
 Let  $Y \subset \mathrm{DSub}(G)$ be the Borel subset with respect to which the shadow lemma (Proposition \ref{prop:shadow lemma for IRS}) holds.  Then for all sufficiently large $k > 0$ there is  a  constant $c' > 0$  such that
$$  \sum_{\gamma \in E_{r,k}(\Delta, Y) } \pi_\nu(\gamma,\Delta) \le c' e^{\delta(\mu) r}$$
for every $r >0$ and  $\mu$-almost every $\Delta \in \mathrm{DSub}(G)$.
\end{prop}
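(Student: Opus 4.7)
The plan is to reduce the estimate to a counting problem on the boundary by inserting the shadow lemma into each summand. Given $\gamma \in E_{r,k}(\Delta,Y)$ we have by definition $\gamma \Delta \gamma^{-1} \in Y$, so Proposition \ref{prop:shadow lemma for IRS} applies to the pair $(\gamma,\Delta)$, yielding the estimate
$$\pi_\nu(\gamma,\Delta) \asymp_c \nu_o^\Delta\bigl(\mathcal{S}_R(o,\gamma^{-1}o)\bigr)\, e^{\delta(\mu) d_X(o,\gamma o)}$$
for any sufficiently large radius $R > 0$. Since $\gamma \in A_\Gamma[r,r+k]$, the exponential factor is bounded by $e^{\delta(\mu)(r+k)}$.

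First I would fix $R > 0$ sufficiently large for the shadow lemma to apply and then choose $k > 0$ simultaneously large enough so that Proposition \ref{prop:covering lemma for shadows} provides a multiplicity bound $p \in \mathbb{N}$ for the shadows $\{\mathcal{S}_R(o,\eta o)\}_{\eta \in A_\Gamma[r,r+k]}$ over $\partial X$. The annulus $A_\Gamma[r,r+k]$ is symmetric under inversion (since $d_X(o,\gamma o) = d_X(o,\gamma^{-1}o)$), so the collection of shadows $\{\mathcal{S}_R(o, \gamma^{-1}o)\}_{\gamma \in A_\Gamma[r,r+k]}$ is the same collection, and each boundary point lies in at most $p$ such shadows.

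Putting these together, and using that $\nu_o^\Delta$ is a probability measure on $\partial X$, one obtains
$$\sum_{\gamma \in E_{r,k}(\Delta, Y)} \nu_o^\Delta\bigl(\mathcal{S}_R(o,\gamma^{-1}o)\bigr) \le \sum_{\gamma \in A_\Gamma[r,r+k]} \nu_o^\Delta\bigl(\mathcal{S}_R(o,\gamma^{-1}o)\bigr) \le p.$$
Combining the two bounds produces
$$\sum_{\gamma \in E_{r,k}(\Delta,Y)} \pi_\nu(\gamma,\Delta) \le c\, e^{\delta(\mu)(r+k)} \cdot p = c' e^{\delta(\mu)r}$$
with $c' = cp\,e^{\delta(\mu) k}$, valid for $\mu$-almost every $\Delta$ (on the full measure set where $\nu^\Delta$ is a well-defined $\Delta$-quasiconformal density with $\|\nu_o^\Delta\|=1$).

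I do not expect any serious obstacle here; the proposition is designed to package the shadow lemma with a counting-via-covering argument. The only technical point to watch carefully is that one chooses $R$ and $k$ simultaneously large enough to invoke both Proposition \ref{prop:shadow lemma for IRS} and Proposition \ref{prop:covering lemma for shadows} with the same parameters, and that the symmetry of the annulus under $\gamma \mapsto \gamma^{-1}$ is exactly what converts the covering statement (phrased for $\mathcal{S}_R(o,\gamma o)$) into the bound on $\mathcal{S}_R(o,\gamma^{-1}o)$ that the shadow lemma produces.
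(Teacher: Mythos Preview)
Your proposal is correct and follows essentially the same argument as the paper: apply the shadow lemma to each summand, bound the exponential by $e^{\delta(\mu)(r+k)}$, extend the shadow sum from $E_{r,k}(\Delta,Y)$ to all of $A_\Gamma[r,r+k]$, and then invoke the multiplicity bound $p$ from Proposition~\ref{prop:covering lemma for shadows} together with $\|\nu_o^\Delta\|=1$ to get the constant $c'=cp\,e^{\delta(\mu)k}$. Your explicit remark that the annulus is invariant under $\gamma\mapsto\gamma^{-1}$ is the one point you spell out more carefully than the paper does.
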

\begin{proof}

According to Proposition \ref{prop:covering lemma for shadows} for all sufficiently large radii $k, R > 0$ there is  an integer $p$ so that 
$$  \bigcup_{\gamma \in A_\Gamma\left[r,r+k \right]} \mathcal{S}_R(o,\gamma o) =  \partial X$$
for every $r > 0$ and moreover every ideal point $\xi \in \partial X$ belongs to at most $p$ distinct shadows. We may  assume that   $R$ is sufficiently large for us to be able to use the Sullivan shadow lemma for invariant random subgroups. Let  $c > 0$ be the corresponding constant, as in  Proposition \ref{prop:shadow lemma for IRS}.  

Denote $\Gamma_{r,k} = A_\Gamma[r,r+k]$.  By the shadow lemma for invariant random subgroups
\begin{align*}
\frac{1}{c} e^{- \delta(\mu)(r + k)}\sum_{\gamma \in E_{r,k}(\Delta,Y) } \pi_\nu(\gamma,\Delta) &\le  
\frac{1}{c} \sum_{\gamma \in E_{r,k}(\Delta,Y) } \pi_\nu(\gamma,\Delta) e^{-\delta(\mu) d_X(o,\gamma o)} \le \\
&\le  \sum_{\gamma \in E_{r,k}(\Delta,Y) } \nu_o^\Delta\left(\mathcal{S}_R(o,\gamma^{-1} o) \right) = (*)
\end{align*}
for every radius $r > 0$  and $\mu$-almost every closed subgroup $\Delta$.  The estimate for $(*)$ can only go up when summing over all of $\Gamma_{r,k}$ rather than just $E_{r,k}(\Delta, Y)$, so that
$$
(*) \le   \sum_{\gamma \in \Gamma_{r,k}} \nu_o^\Delta\left(\mathcal{S}_R(o,\gamma^{-1} o) \right) \le 
    p\nu_o^\Delta(\bigcup_{\gamma \in \Gamma_{r,k}}\mathcal{S}_R(o,\gamma^{-1} o) )  = p \nu^\Delta_o(\partial X) = p.
$$
The result follows with  the constant $c' = cpe^{\delta(\mu)k}$.
\end{proof}

\begin{prop}
\label{prop:cocycle and inverse trick}
 Let  $Y \subset \mathrm{DSub}(G)$ be any Borel subset. Then  
 $$  \int_Y \sum_{\gamma \in E_{r,k}(\Delta, Y)} \pi_\nu(\gamma,\Delta)  \, \mathrm{d}\mu(\Delta) \ge \frac{1}{2d^{12}} \int_Y |E_{r,k}(\Delta,Y)| \, \mathrm{d}\mu(\Delta)  $$
 for every $r, k > 0$, where $d$ is the distortion of the $\mu$-measurable family $\nu$. 
\end{prop}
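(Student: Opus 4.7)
The plan is to symmetrize the left-hand integral by two changes of variables---one in the summation index $\gamma$, one in $\Delta$---and then exploit the approximate involutive cocycle identity $\pi_\nu(\gamma^{-1}, \gamma\Delta\gamma^{-1})\,\pi_\nu(\gamma,\Delta) \asymp_{d^{12}} 1$ derived from quasiconformality. By Tonelli and the definition of $E_{r,k}(\Delta, Y)$ I first rewrite the left-hand side as
$$I := \sum_{\gamma \in A_\Gamma[r, r+k]} \int \mathbf{1}_Y(\Delta)\,\mathbf{1}_Y(\gamma\Delta\gamma^{-1})\,\pi_\nu(\gamma,\Delta)\,d\mu(\Delta).$$

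Because the annulus $A_\Gamma[r, r+k]$ is symmetric under $\gamma\mapsto\gamma^{-1}$ (the orbit map is an isometry of $\Gamma$ and the definition involves $d_X(\gamma o,o)$), relabeling $\gamma \mapsto \gamma^{-1}$ in the sum gives an integrand involving $\mathbf{1}_Y(\gamma^{-1}\Delta\gamma)\,\pi_\nu(\gamma^{-1},\Delta)$. Applying next the $\Gamma$-conjugation invariance of $\mu$ through the substitution $\Delta\mapsto\gamma\Delta\gamma^{-1}$ recovers the support condition $\mathbf{1}_Y(\Delta)\mathbf{1}_Y(\gamma\Delta\gamma^{-1})$ and yields the equivalent expression
$$I = \sum_{\gamma \in A_\Gamma[r, r+k]} \int \mathbf{1}_Y(\Delta)\,\mathbf{1}_Y(\gamma\Delta\gamma^{-1})\,\pi_\nu(\gamma^{-1}, \gamma\Delta\gamma^{-1})\,d\mu(\Delta).$$
Averaging the two expressions for $I$ produces
$$2I = \sum_{\gamma \in A_\Gamma[r, r+k]} \int \mathbf{1}_Y(\Delta)\,\mathbf{1}_Y(\gamma\Delta\gamma^{-1}) \bigl[\pi_\nu(\gamma,\Delta) + \pi_\nu(\gamma^{-1}, \gamma\Delta\gamma^{-1})\bigr]\,d\mu(\Delta).$$

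The approximate cocycle identity gives $\pi_\nu(\gamma,\Delta)\,\pi_\nu(\gamma^{-1}, \gamma\Delta\gamma^{-1}) \ge d^{-12}$ for every $\gamma\in\Gamma$ and $\mu$-almost every $\Delta$, so by AM--GM the bracketed expression is at least $2d^{-6}\ge d^{-12}$ (using $d\ge 1$). Plugging this pointwise bound into the symmetrized identity and recognizing the remaining double sum as $\int_Y |E_{r,k}(\Delta, Y)|\,d\mu(\Delta)$ gives $2I \ge d^{-12}\int_Y |E_{r,k}(\Delta, Y)|\,d\mu(\Delta)$, which is the claim after dividing by $2$.

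The main obstacle is purely bookkeeping: choosing the correct pair of substitutions so that the support indicator $\mathbf{1}_Y(\Delta)\mathbf{1}_Y(\gamma\Delta\gamma^{-1})$ is preserved while transforming $\pi_\nu(\gamma, \Delta)$ into its partner $\pi_\nu(\gamma^{-1}, \gamma\Delta\gamma^{-1})$ that appears in the approximate cocycle relation. The interchange of sum and integral and the $\Gamma$-invariance of $\mu$ are standard; no further input beyond what has already been developed in \S\ref{sec:invariant random subgroups of divergence type} is needed.
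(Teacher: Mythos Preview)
Your proof is correct and follows essentially the same approach as the paper's: swap sum and integral via Tonelli, symmetrize under $\gamma\leftrightarrow\gamma^{-1}$ (using the symmetry of the annulus and the $\Gamma$-conjugation invariance of $\mu$), and apply the approximate cocycle identity $\pi_\nu(\gamma,\Delta)\pi_\nu(\gamma^{-1},\gamma\Delta\gamma^{-1})\asymp_{d^{12}}1$. Your presentation is slightly cleaner than the paper's, which first splits into unordered pairs $\{\gamma,\gamma^{-1}\}$ and handles order-two elements separately before recombining; your direct change of variables in the integral sidesteps that case analysis, and your use of AM--GM even yields the sharper bound $2d^{-6}$ before you relax it to match the stated constant.
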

\begin{proof}
Denote  $\Gamma_{r,k} = A_\Gamma[r,r+k]$. Given an element $\gamma \in \Gamma_{r,k}$ and a subgroup $\Delta \in Y$ it is  clear that 
$ \gamma \in E_{r,k}(\Delta,Y)$ if and only if $\Delta \in Y^{\gamma^{-1}} \cap Y$. 
This observation makes it possible to rewrite the   expression on the left hand side of the inequality  in the statement of the proposition, separating it into one summation over all unordered pairs of elements $\{\gamma,\gamma^{-1}\}$ with $\gamma^2 \neq e$  and another summation over all elements $\gamma$ of order two, and finally switching the integral and the sum using Fubini's theorem.   This gives
\begin{multline*}
\sum_{\{\gamma,\gamma^{-1}\} \subset \Gamma_{r,k}, \gamma^2 \neq e} \left(
\int_{Y^{\gamma^{-1}} \cap Y} \pi_\nu(\gamma,\Delta)\; \text{d}\mu(\Delta) + 
\int_{Y^{\gamma} \cap Y} \pi_\nu(\gamma^{-1},\Delta)\; \text{d}\mu(\Delta) 
\right) = \\
+ \sum_{\gamma \in \Gamma_{r,k}, \gamma^2 = e } \int_{Y^{\gamma } \cap Y} \pi_\nu(\gamma,\Delta) \; \text{d}\mu(\Delta) = (*)
\end{multline*}
Expression $(*)$  can be rearranged again by summing over \emph{all} unordered subsets $\{\gamma, \gamma^{-1}\} \subset \Gamma_r$. Since every element of order two is accounted for twice we obtain
$$
(*) \ge \frac{1}{2} \sum_{\{\gamma,\gamma^{-1}\} \subset \Gamma_{r,k}} 
\int_{Y^{\gamma^{-1}} \cap Y} \left( \pi_\nu(\gamma,\Delta) + \pi_\nu(\gamma^{-1},\Delta^\gamma) \right) \; \text{d}\mu(\Delta) = (**)
$$
Recall that $\pi_\nu$ is a multiplicative quasi-cocycle. In particular
$$ \pi_\nu(\gamma^{-1}, \gamma \Delta \gamma^{-1}) \pi_\nu(\gamma,\Delta) \asymp_{d^{12}} 1$$
which immediately implies that
$$\pi_\nu(\gamma,\Delta) + \pi_\nu(\gamma^{-1},\Delta^\gamma) \ge 2d^{-12}.$$
for every $\gamma \in \Gamma$ and $\mu$-almost every subgroup $\Delta$. The constant $d$ is the distortion of the $\mu$-measurable family $\nu$ of quasi-conformal densities.
 Proceeding with the estimate  $(**)$  we now obtain
\begin{align*}
(**)  &\ge  \frac{1}{2} \sum_{\{\gamma,\gamma^{-1}\} \subset \Gamma_{r,k}} 
\int_{Y^{\gamma^{-1}} \cap Y} \frac{2}{d^{12}}\; \text{d}\mu(\Delta) =\frac{1}{d^{12}} \sum_{\{\gamma,\gamma^{-1}\}\subset\Gamma_{r,k}} \mu(Y \cap Y^\gamma)  \ge \\
&\ge \frac{1}{2d^{12}} \sum_{\gamma \in \Gamma_{r,k}} \mu(Y \cap Y^\gamma) = \frac{1}{2d^{12}} \int_Y |E_r(\Delta,Y)| \; \mathrm{d}\mu(\Delta)
\end{align*}
as required.

\end{proof}

\subsection*{On the critical exponent of a divergence type  invariant random subgroup  }

\begin{theorem}
\label{thm:divergence type implies maximal critical exponent}
Let $\mu$ be  an  discrete invariant random subgroup of divergence type in $G$. Then  $\delta(\mu) = \dim_\mathrm{H}(\partial X)$.
\end{theorem}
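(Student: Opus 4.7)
The upper bound $\delta(\mu) \le \dim_{\mathrm H}(\partial X)$ is essentially automatic: applying the Bishop--Jones / Das--Simmons--Urbanski identity $\delta(\Gamma) = \dim_{\mathrm H}(\Lambda_r(\Gamma)) \le \dim_{\mathrm H}(\partial X)$ to $\mu$-almost every discrete $\Gamma$ and integrating against $\mu$ yields $\delta(\mu) \le \dim_{\mathrm H}(\partial X) = \delta(\Gamma_0)$ where $\Gamma_0$ is the uniform lattice. The work lies in the matching lower bound, which I will obtain by combining the three estimates proved in this section. As a preliminary step I would reduce to the ergodic case via the ergodic decomposition theorem: divergence type is a pointwise property of subgroups, so every ergodic component of $\mu$ remains of divergence type, and the critical exponent disintegrates as an integral over ergodic components.

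Now assume $\mu$ is ergodic. Fix a small $\kappa>0$, a sufficiently large $k$, and let $Y \subset \mathrm{DSub}(G)$ be the Borel subset with $\mu(Y)>1-\kappa$ provided by the shadow lemma (Proposition~\ref{prop:shadow lemma for IRS}). Proposition~\ref{prop:summing over a cocycle} gives the pointwise upper bound
\[ \sum_{\gamma \in E_{r,k}(\Delta,Y)} \pi_\nu(\gamma,\Delta) \le c' e^{\delta(\mu) r} \]
for $\mu$-almost every $\Delta$, while Proposition~\ref{prop:cocycle and inverse trick} controls the integral of the same quantity from below by $\tfrac{1}{2d^{12}} \int_Y |E_{r,k}(\Delta,Y)| \, \mathrm{d}\mu(\Delta)$. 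Combining the two yields
\[ \int_Y |E_{r,k}(\Delta,Y)|\, \mathrm{d}\mu(\Delta) \le 2 d^{12} c' e^{\delta(\mu) r} \mu(Y). \]

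The next step is to bound the same integral from below in terms of $\delta(\Gamma_0)$. By Fubini's theorem and the conjugation-invariance of $\mu$,
\[ \int_Y |E_{r,k}(\Delta,Y)|\, \mathrm{d}\mu(\Delta) = \sum_{\gamma \in A_{\Gamma_0}[r,r+k]} \mu\bigl(Y \cap \gamma^{-1} Y \gamma\bigr) \ge |A_{\Gamma_0}[r,r+k]| \, (2\mu(Y) - 1), \]
since $\mu(\gamma^{-1} Y \gamma) = \mu(Y) > 1-\kappa$ forces $\mu(Y \cap \gamma^{-1} Y \gamma) \ge 1 - 2\kappa$ for every $\gamma$. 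Coornaert's theorem (Theorem~\ref{thm:Coornaert}) then provides $|A_{\Gamma_0}[r,r+k]| \ge a^{-1} e^{\delta(\Gamma_0) r}$ for all $r \in \mathbb N$. Putting these estimates together gives
\[ \frac{1-2\kappa}{a}\, e^{\delta(\Gamma_0) r} \le 2 d^{12} c' \mu(Y) \, e^{\delta(\mu) r} \]
for all sufficiently large $r$, which forces $\delta(\mu) \ge \delta(\Gamma_0) = \dim_{\mathrm H}(\partial X)$ and completes the proof.

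The main obstacle, I expect, is not a new estimate but rather the careful bookkeeping: making sure the distortion constants $d$ from the quasi-cocycle relation, the constants $c$ from the shadow lemma, the constants from Coornaert's annular growth, and the small loss $1-2\kappa$ coming from the recurrence $\mu(Y \cap \gamma^{-1}Y\gamma) \ge 2\mu(Y)-1$ all line up into the clean exponential comparison above. Once these pieces are assembled, the conclusion is immediate. A secondary point to check is that the reduction to ergodic $\mu$ is legitimate, i.e.\ that the $\mu$-measurable quasiconformal density used to define $\pi_\nu$ can be chosen for each ergodic component; this follows from the existence result in the appendix (Proposition~\ref{prop:existance of a measurable quasiconformal density}).
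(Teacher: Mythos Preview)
Your proposal is correct and differs from the paper's proof in one instructive way. Both arguments obtain the upper bound
\[
\int_Y |E_{r,k}(\Delta,Y)| \, \mathrm{d}\mu(\Delta) \le 2d^{12}c'\mu(Y)\, e^{\delta(\mu) r}
\]
by chaining Propositions~\ref{prop:summing over a cocycle} and~\ref{prop:cocycle and inverse trick}. For the matching lower bound, however, the paper appeals to quantitative recurrence (Corollary~\ref{cor:quantitive recurence}, derived from the Bowen--Nevo maximal ergodic theorem) to produce a positive-measure $U \subset Y$ on which $|E_{r,k}(\Delta,U)| \ge \kappa\, e^{\delta(\Gamma_0) r}$ holds \emph{pointwise}, and then integrates. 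Your route is more elementary: the Fubini identity $\int_Y |E_{r,k}(\Delta,Y)|\,\mathrm{d}\mu = \sum_{\gamma \in A_{\Gamma_0}[r,r+k]} \mu(Y \cap \gamma^{-1} Y \gamma)$, the trivial bound $\mu(Y \cap \gamma^{-1} Y \gamma) \ge 2\mu(Y) - 1$, and Coornaert's Theorem~\ref{thm:Coornaert} already give the required exponential lower bound. This is a genuine simplification: it shows that Theorem~\ref{thm:divergence type implies maximal critical exponent}, unlike Theorem~\ref{thm:divergence at one half of critical exponent}, does not actually require the maximal ergodic theorem---an \emph{averaged} recurrence estimate suffices for the exponent comparison, and no pointwise information is needed. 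One small omission: the shadow lemma (Proposition~\ref{prop:shadow lemma for IRS}) assumes $\mu$ is non-elementary, a hypothesis the paper verifies from divergence type via Proposition~\ref{prop:existence of a neighborhood having positive probability} and Theorem~\ref{thm:divergence at one half of critical exponent}; you should at least flag that this hypothesis is met before invoking the shadow lemma.
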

\begin{proof}
Recall that the critical exponent  of any discrete subgroup of $\mathrm{Isom}(X)$ is bounded above by $\dim_\text{H}(\partial X)$. In particular $\delta(\mu) \le \dim_\text{H}(\partial X)$ is true in general. We  need to establish the reverse inequality, namely show that $\delta(\mu) \ge \dim_\text{H}(\partial X)$. 

The ergodic decomposition allows us to assume from now on and without loss of generality that $\mu$ is ergodic. Since $\mu$ is of divergence type it must be infinite. Therefore Proposition \ref{prop:existence of a neighborhood having positive probability} and Theorem \ref{thm:divergence at one half of critical exponent} imply that $\mu$ is non-elementary.

We begin be setting up the required objects. Fix a basepoint $o \in X$ and let $\Gamma$ be a uniform lattice in $G$. Let $\nu$ be a $\mu$-measurable family of quasiconformal densities of dimension $\delta(\mu)$ and distortion $d\ge 1$, normalized so that $\|\nu^\Delta_o\| = 1$. Let  $\pi_\nu$ be the associated Poincare quasi-cocycle.  Fix a sufficiently large $ k > 0$ and a sufficiently small constant  $0 < \kappa <1 $ so that  Corollary \ref{cor:quantitive recurence} of  quantitative recurrence holds. Finally,  let $Y \subset \mathrm{DSub}(G)$ be a Borel subset as in Proposition \ref{prop:shadow lemma for IRS} (i.e. the shadow lemma for invariant random subgroups) and satisfying $\mu(Y) > 1-\kappa$.

The quantitative recurrence property established in  Corollary \ref{cor:quantitive recurence} provides us with a Borel subset $U \subset Y$ with $\mu(U) > 0$ so that
$$   |E_{r,k}(\Delta,U)| \ge \kappa e^{ \delta(\Gamma) r}$$
for all $ r > 0$ and for  every $\Delta \in U$. Integrating this over $U$ gives
$$ \int_U  |E_{r,k}(\Delta,U)|  \; \mathrm{d}\mu(\Delta) \ge (\kappa \mu(U)) e^{\delta(\Gamma)r }  $$
for all $ r > 0$. On the other hand, by combining the two previous Propositions \ref{prop:summing over a cocycle} and \ref{prop:cocycle and inverse trick} we deduce that
$$ \int_Y |E_{r,k}(\Delta,Y)| \, \mathrm{d}\mu(\Delta) \le (2d^{12} c'\mu(Y)) e^{\delta(\mu) r} $$
for all $ r > 0$. Since $U \subset Y$ these two estimates can only be compatible for all sufficiently large $ r$ provided that $\delta(\mu) \ge \delta(\Gamma) = \dim_\textrm{H}(\partial X)$.
\end{proof}

\emph{Our proofs of Proposition \ref{prop:shadow lemma for IRS} and 
Theorem \ref{thm:divergence type implies maximal critical exponent} are inspired by the elegant expository paper  by Quint  \cite{Quint}, especially his Lemma 4.10 and Theorem 4.11.}

%



\section{Proofs of the main results}

We now complete the proofs of all the results stated in the introduction.

\begin{proof}[Proof of Theorem \ref{thm:main theorem}]
Let $G$ be a non-elementary group acting cocompactly on $X$ and  $\mu$  a discrete and infinite invariant random subgroup of $G$. The second part of Theorem \ref{thm:main theorem} follows immediately from Theorem \ref{thm:divergence type implies maximal critical exponent}. Moreover 
$$ \mathcal{P}_\Delta\left(\frac{\dim_\mathrm{H}(\partial X)}{2}\right) = \infty $$
for $\mu$-almost every closed subgroup $\Delta$ of $G$ according to Theorem \ref{thm:divergence at one half of critical exponent}.
In particular 
$\delta(\Delta) \ge \frac{1}{2}\dim_\mathrm{H}(\partial X) > 0$.
We claim that in fact $\delta(\Delta) > \frac{1}{2}\dim_\mathrm{H}(\partial X) $ happens $\mu$-almost always. For if this is not the case then $\delta(\Delta) = \frac{1}{2}\dim_\mathrm{H}(\partial X)$ and in particular $\Delta$ is of divergence type $\mu$-almost surely.  Therefore   $\delta(\Delta) = \dim_\mathrm{H}(\partial X)$ holds $\mu$-almost surely by Theorem \ref{thm:divergence type implies maximal critical exponent}, which leads to a contradiction  as $\dim_\mathrm{H}(\partial X)>0$. 
\end{proof}

\begin{proof}[Proof of Corollary \ref{cor:IRS of divergence type iff geodesic flow is ergodic}]
Let $G$ be a rank-one simple Lie group and $X$ be the associated Gromov hyperbolic symmetric space.   It is known \cite{sullivan1979density, roblin2003ergodicite} that the action of any discrete subgroup $\Gamma$ on $\partial^2 X$ is ergodic with respect to the Lebesgue measure class if and only if $\mathcal{P}_\Gamma(\dim_\mathrm{H}(\partial X)) = \infty$.  Moreover the geodesic flow on $\Gamma \backslash G$ is ergodic if and only if  $\Gamma$ acts ergodically on $\partial^2 X$. In particular, the ergodicity of the geodesic flow on $\Gamma \backslash G$ implies in general that $\Gamma$ has divergence type. The converse implication of the corollary follows from part (2) of Theorem \ref{thm:main theorem}.
\end{proof}

\begin{proof}[Proof of Corollary \ref{cor:Kestens theorem for IRS in surfaces}]
Let $G$ be a rank-one simple Lie group and $\mu$ be a non-atomic    invariant random subgroup   of $G$ so that $\mu$-almost every subgroup is torsion-free. We know that $\mu$-almost every subgroup is discrete and non-amenable, see e.g. Theorem \ref{thm:borel density theorem for IRS} and the remarks following Theorem \ref{thm:divergence at one half of critical exponent}. Our main result implies that $\delta(\mu) > \frac{1}{2}\mathrm{dim}_\text{H}(\partial X)$. The corollary follows from the precise relationship between the two quantities $\delta(\Gamma)$ and $\lambda_0(\Gamma \backslash X)$ mentioned in the introduction.
\end{proof}

\begin{proof}[Proof of Corollary \ref{cor:divergence type with T}] 
Let $G$ be one of the groups mentioned in the statement  and assume that $G$ has Kazhdan's property $(T)$. It is known by \cite[Theorem 4.4]{Cor}, \cite[Theorem 2]{leuzinger2004critical} and \cite[Corollary 1.4]{CDS} that there is some $\delta_c > 0$ such that every discrete subgroup $\Gamma$ of $G$ is either a lattice or satisfies $\delta(\Gamma) < \dim_\mathrm{H}(\partial X) - \delta_c$. 

Now let $\mu$ be a discrete   invariant random subgroup of $G$ of divergence type. Theorem \ref{thm:main theorem} implies that $\delta(\mu) = \dim_\mathrm{H}(\partial X)$. We deduce that $\mu$-almost every closed subgroup of $G$ is a lattice. Since $\mu$ is ergodic it must essentially be  the invariant random subgroup associated to some particular lattice $\Gamma$ of $G$, see \cite[Corollary 3.2]{SZ} for Lie groups or \cite[Corollary 5.6]{HT} in general.
\end{proof}

\begin{proof}[Proof of Corollary \ref{cor:application to pmp actions}]
Let $(Z,\mu)$ be a Borel space admitting a probability measure preserving action of $G$. We obtain an invariant random subgroup $\overline{\mu}$ of $G$ by considering the $G$-equivariant stabilizer map
$$ \mathrm{Stab} : Z \to \mathrm{Sub}(G), \quad z \mapsto G_z $$
and taking $\overline{\mu} = \mathrm{Stab}_* \mu$. The corollary follows by applying Theorem \ref{thm:main theorem} to $\overline{\mu}$, provided that the stabilizer of $\mu$-almost every point $ z \in Z$ is assumed  to be discrete and infinite.
\end{proof}

\begin{proof}[Proof of Corollary \ref{cor:application to pmp actions of discrete groups}]
Let $\Gamma$ be a lattice in the isometry group $\mathrm{Isom}(X)$ of some proper Gromov hyperbolic space  space $X$. Let  $(Z, \mu)$ be a Borel $\Gamma$-space with an invariant probability measure. 
It is possible to induce the $\Gamma$-action from $Z$ to a probability measure preserving action of $G = \mathrm{Isom}(X)$ on some Borel space $(\overline{Z}, \overline{\mu})$. See \cite[p. 75]{Zimmer} for details. The stabilizer $G_z$ of $\overline{\mu}$-almost every point $ z \in \overline{Z}$ is conjugate to the stabilizer $\Gamma_{z'}$ of some point in $z' \in Z$ and is in particular discrete. The result  follows from Corollary \ref{cor:application to pmp actions}.
\end{proof}



\appendix
\section{Measurability in the Chabauty Borel space}
\label{appendix}

This appendix is dedicated to various technical issues related to the Borel structure on $\mathrm{Sub}(G)$. In particular, we show that several notions and constructions used above, such as the critical exponent and  quasiconformal densities, all depend measurably on the choice of a discrete subgroup.

In general, given a second countable locally compact group $G$   and   an open subset $U \subset G$, the set $ \{H \le G \: : \: H \cap U \neq \emptyset \}$ is Chabauty open.  The Borel structure on $\mathrm{Sub}(G)$ coming from the Chabauty topology is called the \emph{Effros Borel structure} \cite[\S12.C]{Kechris}. Given a countable basis  $U_n$ for the topology of $G$, the Effros Borel structure is generated by the subsets $\{H \le G \: : \: H \cap U_n = \emptyset \}$. 

The first result of the appendix holds true for any second countable locally compact group $G$.

\begin{prop}
The subset $\mathrm{DSub}(G)$ of discrete subgroups in $\mathrm{Sub}(G)$ is Borel.
\end{prop}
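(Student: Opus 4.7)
The plan is to reduce discreteness to a countable disjunction of Chabauty-closed conditions, using the second countability of $G$ together with the standard description of the Effros Borel structure.

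First I would recall the characterization of discreteness for closed subgroups. A closed subgroup $H \le G$ is discrete if and only if the identity is isolated in $H$, equivalently if there exists an open neighborhood $U$ of $e$ in $G$ such that $H \cap U = \{e\}$. Since $G$ is Hausdorff the singleton $\{e\}$ is closed, so this can be rewritten as $H \cap (U \setminus \{e\}) = \emptyset$.

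Next I would exploit second countability to replace the existential quantifier over arbitrary neighborhoods by one over a countable family. Choose a countable base $(U_n)_{n\in\mathbb{N}}$ of open neighborhoods of $e$ in $G$. Any neighborhood $U$ of $e$ contains some $U_n$, so $H$ is discrete if and only if there exists $n$ with $H \cap U_n \subseteq \{e\}$, i.e.\ with $H \cap V_n = \emptyset$, where $V_n := U_n \setminus \{e\}$ is open in $G$.

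The final step is to observe that each set $\{H \in \mathrm{Sub}(G) : H \cap V_n = \emptyset\}$ is the complement of the Chabauty sub-basic open set $\{H : H \cap V_n \neq \emptyset\}$; hence it is Chabauty-closed, and in particular Borel in the Effros structure. Therefore
$$ \mathrm{DSub}(G) = \bigcup_{n \in \mathbb{N}} \{H \in \mathrm{Sub}(G) : H \cap V_n = \emptyset\} $$
is a countable union of Borel sets and hence Borel, which completes the proof. I do not anticipate any real obstacle: the only point requiring care is ensuring that the neighborhood witnessing discreteness can be taken from a countable family, which is immediate from second countability.
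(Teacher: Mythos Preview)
Your proof is correct and follows exactly the same idea as the paper: express $\mathrm{DSub}(G)$ via the countable family of Chabauty-closed sets $\{H : H \cap (U_n \setminus \{e\}) = \emptyset\}$ indexed by basic neighborhoods of the identity. The paper's displayed formula is written with $\bigcap$ rather than $\bigcup$, but your union is the correct combinatorial operation, and the underlying argument is identical.
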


\begin{proof}
This follows immediately from the observation that
$$ \mathrm{DSub}(G) = \bigcap_{\{n \in \mathbb{N}\::\: e \in U_n\} } \{ H \le G \: : \: H \cap (U_n \setminus \{e\}) = \emptyset \}. $$
\end{proof}

From now on we specialize the discussion to the situation where $X$ is a proper geodesic Gromov hyperbolic metric  space with   basepoint $o \in X$ and $G$ is a closed subgroup of $\mathrm{Isom}(X)$. Some of the notations that we use below depend implicitly on $o$.

\begin{prop}
\label{prop:orbital counting is Borel}
For every radius $R > 0$ the function
$$n_R : \mathrm{DSub}(G) \to \mathbb{N}, \quad n_R(\Gamma) = | \Gamma o \cap B_X(o,R) | $$
is Borel.
\end{prop}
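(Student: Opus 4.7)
The plan is to reduce the Borel measurability of the integer-valued function $n_R$ to showing that $\{\Gamma \in \mathrm{DSub}(G) : n_R(\Gamma) \geq k\}$ is Borel (in fact, open) for every $k \in \mathbb{N}$. The first move is a harmless normalization: the closed-vs-open ball distinction does not affect the conclusion, since the closed ball is an intersection of open balls of slightly larger radius, and a pointwise decreasing limit of $\mathbb{N}$-valued Borel functions is Borel. So I will assume $B_X(o,R)$ denotes the open ball.

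Next, I would fix a countable basis $\{V_n\}_{n \in \mathbb{N}}$ for the topology on $X$ (which exists since $X$ is proper and hence separable). For each $n$ the orbit map $g \mapsto go$ from $G$ to $X$ is continuous, so the set $W_n = \{g \in G : go \in V_n\}$ is open in $G$, and therefore $\{\Gamma \in \mathrm{Sub}(G) : \Gamma \cap W_n \neq \emptyset\}$ is a Chabauty-open subset of $\mathrm{Sub}(G)$.

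The heart of the argument is the combinatorial equivalence: $n_R(\Gamma) \geq k$ if and only if there exist indices $n_1, \ldots, n_k$ such that the sets $V_{n_1}, \ldots, V_{n_k}$ are pairwise disjoint, each $V_{n_j}$ is contained in $B_X(o,R)$, and $\Gamma \cap W_{n_j} \neq \emptyset$ for every $j$. The nontrivial direction is the forward one: given $k$ distinct orbit points $\gamma_1 o, \ldots, \gamma_k o$ in the open ball, since $X$ is Hausdorff and second countable we may separate them by pairwise disjoint basic open neighborhoods entirely contained in $B_X(o,R)$. The reverse direction is immediate, because the pairwise disjointness of the $V_{n_j}$ forces the witnessing elements $\gamma_j \in \Gamma \cap W_{n_j}$ to satisfy $\gamma_j o \neq \gamma_i o$ when $i \neq j$.

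Putting everything together gives the representation
$$
\{\Gamma \in \mathrm{DSub}(G) : n_R(\Gamma) \geq k\} = \bigcup \bigcap_{j=1}^{k} \{\Gamma : \Gamma \cap W_{n_j} \neq \emptyset\},
$$
where the union ranges over the countable collection of admissible tuples $(n_1, \ldots, n_k)$. This exhibits the set as a countable union of finite intersections of Chabauty-open sets, hence Borel. There is no genuine obstacle here; the only subtlety worth flagging is making sure that distinctness of orbit points is faithfully encoded by disjointness of basic open sets, which is precisely what second countability and Hausdorffness buy us.
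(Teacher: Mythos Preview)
Your argument is correct, and it differs genuinely from the paper's. The paper approximates $n_R$ by auxiliary counting functions $n_{R,\varepsilon}$ built from $\varepsilon$-separated, $2\varepsilon$-covering nets $S_\varepsilon \subset X$: one counts how many net points in $B_X(o,R)$ have an orbit point within $2\varepsilon$, observes this is a finite sum of indicators of Chabauty-open sets, and then recovers $n_R$ as $\limsup_{\varepsilon \to 0} n_{R,\varepsilon}$. Your route instead analyzes the super-level sets $\{n_R \ge k\}$ directly, encoding ``at least $k$ distinct orbit points in the open ball'' by the existence of $k$ pairwise disjoint basic open sets each meeting the orbit, and concludes these sets are Chabauty-open. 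Your argument is slightly sharper in that it shows $n_R$ (for the open ball) is lower semicontinuous on $\mathrm{DSub}(G)$, not merely Borel; it also uses nothing beyond second countability and the Hausdorff property. The paper's net-based technique, on the other hand, is recycled almost verbatim in the very next proposition to handle the Patterson measures $W_s$, so its virtue is uniformity across the appendix rather than minimality here.
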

\begin{proof}
For every   $\varepsilon > 0$ choose an arbitrary $\varepsilon$-separated and $2\varepsilon$-covering subset $S_\varepsilon$ of $X$ containing the base-point  $o$.  For every radius $R > 0$  denote
$$ n_{R,\varepsilon}(\Gamma) = |\{ x \in S_\varepsilon \: : \: \text{$ x \in B_X(o,R)$ and $B_{X}(x, 2\varepsilon)\cap \Gamma o \neq \emptyset $} \}| $$
where $\Gamma \in \mathrm{DSub}(G)$ is a discrete subgroup.
It follows from the definition of the Effros Borel structure that the function $n_{R,\varepsilon}$ is Borel for every $R,\varepsilon>0$. Note that
$$ n_R(\Gamma) = \limsup_{\varepsilon \to 0}n_{R,\varepsilon}(\Gamma) $$
and therefore $n_R$ is Borel as well for every $R > 0$.
\end{proof}

\begin{prop}
\label{prop:critical exponent is measurable, conjugation invariant}
The critical exponent function $$ \delta : \mathrm{DSub}(G) \to \left[0, \dim_\mathrm{H}(\partial X) \right] $$
is  Borel and conjugation invariant.
\end{prop}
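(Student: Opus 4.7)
The plan is to combine the formula $\delta(\Gamma)=\liminf_{R\to\infty} R^{-1}\ln n_R(\Gamma)$ recalled in the body of the paper with Proposition \ref{prop:orbital counting is Borel}, which says each $n_R$ is a Borel function of $\Gamma$. First I would reduce the continuous liminf to a countable one. Since $R\mapsto n_R(\Gamma)$ is non-decreasing and integer-valued, the sandwich $n_{\lfloor R\rfloor}(\Gamma)\le n_R(\Gamma)\le n_{\lceil R\rceil}(\Gamma)$ combined with $\lfloor R\rfloor/R\to 1$ yields
$$\delta(\Gamma)=\liminf_{n\in\mathbb{N}}\tfrac{1}{n}\ln n_n(\Gamma).$$
The right-hand side is a countable liminf of Borel functions: note that $n_n(\Gamma)\ge 1$ because $e\in\Gamma$, and $n_n(\Gamma)<\infty$ because $\Gamma$ is discrete in $\mathrm{Isom}(X)$ and $X$ is proper. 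Hence $\delta:\mathrm{DSub}(G)\to[0,\dim_\mathrm{H}(\partial X)]$ is Borel; the range statement is already recorded in the paper.

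For conjugation invariance, fix $g\in G$ and $\Gamma\in\mathrm{DSub}(G)$. For every $\gamma\in\Gamma$ the triangle inequality gives
$$\bigl|d_X(o,g\gamma g^{-1}o)-d_X(o,\gamma o)\bigr|\le 2d_X(o,g^{-1}o),$$
so the Poincare series $\mathcal{P}_{g\Gamma g^{-1}}(s)$ and $\mathcal{P}_\Gamma(s)$ differ at each $s\ge 0$ only by the multiplicative constant $e^{2s\,d_X(o,g^{-1}o)}$. Their abscissas of convergence therefore coincide, whence $\delta(g\Gamma g^{-1})=\delta(\Gamma)$.

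No step here presents a serious obstacle: Proposition \ref{prop:orbital counting is Borel} has already absorbed the genuine measurability content by approximating orbits on a fixed countable $\varepsilon$-net of $X$, and the remaining work is a monotonicity check for the liminf plus a single triangle inequality. The mildest care is needed in justifying the reduction from real $R$ to integer $n$, after which the conclusion is immediate.
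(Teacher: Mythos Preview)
Your proof is correct and follows essentially the same approach as the paper: Borel measurability via the liminf formula for $\delta$ together with Proposition~\ref{prop:orbital counting is Borel}, and conjugation invariance via a triangle-inequality comparison (the paper phrases the latter as basepoint independence, which amounts to the same estimate). Your explicit reduction of the continuous $\liminf_{R\to\infty}$ to the countable $\liminf_{n\in\mathbb{N}}$ is a welcome clarification that the paper leaves implicit.
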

\begin{proof}
By the previous proposition, the orbital counting function $n_R(\Gamma) = | \Gamma o \cap B_X(o,R) | $ is Borel on $\mathrm{DSub}(G)$ for every $R > 0$.
 It follows that the critical exponent 
$$ \delta(\Gamma) = \liminf_{R \to \infty} \frac{1}{R} \ln n_R(\Gamma) $$
is a Borel measurable function on $\mathrm{DSub}(G)$ as well.
The fact that $\delta$ is conjugation invariant is well known. To see this, it suffices to observe that
$$ |\Gamma o \cap B_X(o,R)| = |(g \Gamma g^{-1}) go \cap B_X(go, R)| $$
and recall that the  critical exponent is independent of the choice of basepoint.
\end{proof}

 Let $\mathcal{M}(X \cup \partial X)$ denote the  convex space of probability measures on $X \cup \partial X$ with the weak-$*$ topology. Given $\nu \in \mathcal{M}(X \cup \partial X)$ denote $\|\nu\| = \nu(X \cup \partial X)$. Let $\mathcal{M}_1(X \cup \partial X)$ denote the compact convex subset of $\mathcal{M}(X \cup \partial X)$ consisting of probability measures, that is all $\nu$ with $\|\nu\| = 1$.  
Given any real number $s \ge 0$ denote
$$ \mathrm{DSub}_{>s}(G) = \delta^{-1}((s,\infty)) = \{ \Gamma \in \mathrm{DSub}(\Gamma) \: : \: \delta(\Gamma) > s \}.$$
In particular $\mathrm{DSub}_{>s}(G)$  is a  Borel subset by the previous proposition.

\begin{prop}
\label{prop:s-measures are measurable}
For every real number $s \ge 0$  the map
$$ W_{s} : \mathrm{DSub}_{>s}(G) \to \mathcal{M}_1(X \cup \partial X), \quad W_s : \Gamma \mapsto \frac{\sum_{g\in \Gamma} e^{-s d_{X}(go,o)} \delta_{go} }{\sum_{g\in \Gamma}e^{-sd_{X}(go,o)}}$$
is Borel.
\end{prop}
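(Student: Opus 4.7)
The plan is the standard two-step reduction for Borel measurability into a space of probability measures. Since $X \cup \partial X$ is a compact metrisable space, the weak-$*$ topology on $\mathcal{M}_1(X \cup \partial X)$ is compact metrisable and its Borel $\sigma$-algebra is generated by the evaluation maps $\nu \mapsto \int f\,d\nu$ with $f$ ranging over a countable dense subfamily of $C(X\cup \partial X)$. Hence it suffices to show that for every continuous $f \in C(X\cup \partial X)$, the scalar function
\[
\Phi_f(\Gamma) \;=\; \int f\, dW_s(\Gamma) \;=\; \frac{\sum_{g\in\Gamma} e^{-sd_X(go,o)}\, f(go)}{\sum_{g\in\Gamma} e^{-sd_X(go,o)}}
\]
is Borel on $\mathrm{DSub}_{>s}(G)$. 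Specialising to $f \equiv 1$ handles the denominator, so the task reduces to showing that weighted orbital sums are Borel in $\Gamma$.

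I would handle these sums in two stages, mirroring the proof of Proposition~\ref{prop:orbital counting is Borel}. First truncate to a ball $B_X(o,R)$ and approximate the truncated orbital sum
\[
N_f^R(\Gamma) \;:=\; \sum_{\substack{g\in\Gamma\\ d_X(go,o)\leq R}} e^{-sd_X(go,o)}\, f(go)
\]
by finite sums
\[
N_f^{R,\varepsilon}(\Gamma) \;:=\; \sum_{\substack{x\in S_\varepsilon \\ x\in B_X(o,R)}} e^{-sd_X(x,o)}\, f(x)\cdot \mathbf{1}\!\bigl[B_X(x,2\varepsilon)\cap \Gamma o \neq \emptyset\bigr],
\]
where $S_\varepsilon \subset X$ is an $\varepsilon$-separated $2\varepsilon$-covering net containing $o$. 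Each indicator is a Chabauty-open condition on $\Gamma$ by definition of the Effros Borel structure, and since $X$ is proper the outer sum is finite, so $N_f^{R,\varepsilon}$ is Borel in $\Gamma$. Uniform continuity of $f$ and of $e^{-sd_X(\cdot,o)}$ on $\overline{B_X(o,R)}$ combined with the discreteness of $\Gamma o$ then gives $N_f^{R,\varepsilon}(\Gamma) \to N_f^R(\Gamma)$ as $\varepsilon \to 0$, so $N_f^R$ is Borel, and consequently so is the truncated ratio $\Phi_f^R(\Gamma) = N_f^R(\Gamma)/D^R(\Gamma)$ (well-defined once $R$ is large enough for $D^R(\Gamma)$ to be non-zero, which holds eventually since $\Gamma$ is infinite on $\mathrm{DSub}_{>s}(G)$).

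Finally, I would recover $\Phi_f$ as the pointwise limit $\lim_{R\to\infty}\Phi_f^R$, coming from the weak-$*$ convergence of the truncated normalised measures that define $W_s(\Gamma)$ in the Patterson--Sullivan sense. Pointwise limits of Borel functions are Borel, so $\Phi_f$ is Borel for every $f$ and hence $W_s$ itself is Borel. The main obstacle I expect is the very last step, namely ensuring that for every $\Gamma \in \mathrm{DSub}_{>s}(G)$ the truncated normalised measures genuinely converge in the weak-$*$ topology (and are not merely relatively compact) so that the pointwise-limit argument applies; if one is forced to select accumulation points along $R$-subsequences depending on $\Gamma$, this is remedied by a Kuratowski--Ryll-Nardzewski measurable selection inside the compact metrisable space $\mathcal{M}_1(X \cup \partial X)$, which preserves Borel measurability of the resulting map.
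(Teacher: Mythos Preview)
Your proposal is correct and takes essentially the same route as the paper: approximate $W_s$ through $\varepsilon$-separated $2\varepsilon$-covering nets $S_\varepsilon \ni o$, observe that the resulting approximants are Borel via the Effros structure, and pass to the limit. The paper streamlines this by forming the measure-valued approximant $W_{s,\varepsilon}(\Gamma)$ directly (the normalization of $\sum_x e^{-sd_X(x,o)}\delta_x$ over all net points $x\in S_\varepsilon$ with $B_X(x,2\varepsilon)\cap\Gamma o\neq\emptyset$) and taking a single weak-$*$ limit as $\varepsilon\to 0$, so your intermediate ball truncation in $R$ and the attendant measurable-selection worry never arise.
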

\begin{proof}
We proceed similarly to Proposition \ref{prop:orbital counting is Borel}. Namely, for every $\varepsilon$  choose an arbitrary $\varepsilon$-separated and $2\varepsilon$-covering subset $S_\varepsilon$ of $X$ containing the basepoint  $o$. Denote $S_\varepsilon(\Gamma) = \{x \in S_\varepsilon\::\: B_{X}(x,)) \cap \Gamma \neq \emptyset\}$ and
$$ W_{s,\varepsilon}(\Gamma) =  \frac{\sum_{x \in S_\varepsilon(\Gamma) } e^{-s d_X(x,o)} \delta_x }{\sum_{x \in S_\varepsilon(\Gamma)} e^{-s d_X(x,o)} } \in \mathcal{M}_1(X \cup \partial X)$$
for every discrete subgroup $\Gamma \in \mathrm{DSub}_{>s}(G)$. By the definition of the Effros Borel structure  the function $W_{s,\varepsilon}$ is Borel for all $\varepsilon$. Note that for every discrete subgroup $\Gamma \in \mathrm{DSub}_{>s}(G)$ the weak-$*$ limit of the probability measures $W_{s,\varepsilon}(\Gamma)$ exists and in fact
$$ W_{s}(\Gamma) = \lim_{\varepsilon \searrow 0} W_{s,\varepsilon} (\Gamma). $$
The function $W_s$ is therefore Borel, as required.
\end{proof}

Recall that the standard construction of a quasiconformal density for a fixed discrete group $\Gamma$ of divergence type involves taking a weak-$*$ accumulation point as $s \searrow \delta(\Gamma)$. The following general lemma is needed to be able to do so in a measurable way.

\begin{lemma}\label{measureablechoice}
Let $X$ be a Borel space, $M$ a second countable compact metric space and $f_n : X \to M$ a sequence of Borel maps. Then there is a Borel map $p : X \to M$ such that $p(x)$ is an accumulation point in $M$ of the sequence $f_n(x)$ for every $x \in X$.
\end{lemma}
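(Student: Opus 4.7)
The plan is to recognize this as a standard measurable selection problem. For each $x \in X$, let
$$ A(x) = \{\xi \in M \: : \: \xi \text{ is an accumulation point of } (f_n(x))_{n \in \mathbb{N}} \} = \bigcap_{N \in \mathbb{N}} \overline{\{f_n(x) \: : \: n \ge N\}}. $$
Then $A(x)$ is a closed subset of $M$, and it is nonempty since $M$ is compact metric (every sequence has a convergent subsequence). The strategy is to apply the Kuratowski--Ryll-Nardzewski measurable selection theorem to the set-valued map $A : X \to \mathcal{F}(M)$, which will yield a Borel map $p : X \to M$ with $p(x) \in A(x)$ for every $x$.

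To apply Kuratowski--Ryll-Nardzewski I need to verify that $A$ is \emph{weakly Borel measurable}, i.e.\ that $\{x \in X \: : \: A(x) \cap U \neq \emptyset\}$ is Borel for every open $U \subset M$. Fix a countable basis $\{V_k\}_{k \in \mathbb{N}}$ for $M$. I claim
$$ \{x \in X \: : \: A(x) \cap U \neq \emptyset\} = \bigcup_{\{k \: : \: \overline{V_k} \subset U\}} \bigl\{x \in X \: : \: \{n \in \mathbb{N} \: : \: f_n(x) \in V_k\} \text{ is infinite} \bigr\}. $$
The inclusion $\supset$ uses the compactness of $\overline{V_k}$: any infinite subsequence of $(f_n(x))$ lying in $V_k$ has an accumulation point in $\overline{V_k} \subset U$. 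Conversely, if $\xi \in A(x) \cap U$ then by regularity of $M$ there is a basic $V_k$ with $\xi \in V_k$ and $\overline{V_k} \subset U$; since $\xi$ is an accumulation point, $\{n : f_n(x) \in V_k\}$ is infinite. Each set on the right is Borel because
$$ \{x \: : \: \{n : f_n(x) \in V_k\} \text{ is infinite}\} = \bigcap_{N \in \mathbb{N}} \bigcup_{n \ge N} f_n^{-1}(V_k), $$
and $f_n^{-1}(V_k)$ is Borel by hypothesis. The countable union over those $k$ with $\overline{V_k} \subset U$ is therefore Borel as well.

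With weak measurability and nonempty closed values established, the Kuratowski--Ryll-Nardzewski theorem produces the required Borel selector $p$. The only subtlety is the characterization of accumulation points through a countable basis and the use of $\overline{V_k} \subset U$ rather than merely $V_k \subset U$, which is needed so that the accumulation point extracted from an infinite subsequence in $V_k$ is guaranteed to lie in $U$; this is the main (minor) technical step of the argument.
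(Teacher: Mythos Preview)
Your proof is correct and follows essentially the same approach as the paper: define the set-valued map sending $x$ to its (closed, nonempty) set of accumulation points, verify that this map is Borel in the Effros sense, and invoke the Kuratowski--Ryll-Nardzewski selection theorem. The only difference is cosmetic: the paper checks the Borel condition via the complementary sets $\{x : P(x)\cap N_m=\emptyset\}$, whereas you verify $\{x : A(x)\cap U\neq\emptyset\}$ directly using the regularity trick $\overline{V_k}\subset U$; your version is in fact a bit more carefully justified.
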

\begin{proof}
Let $K(M)$ denote the Hausdorff space of all closed subsets of $M$. Let $P(x) \in K(M)$ be the non-empty  closed subset consisting of all the accumulation points of the sequence $f_n(x)$ for every $x \in X$. We claim that the map $P : X \to K(M)$ is Borel. The existence of the required selection function $p(x)$ will then follow at once   from the Kuratowski--Ryll-Nardzewski selection theorem \cite[Theorem 12.13]{Kechris}.

There is a countable family of open subsets $N_m \subset M$ for $m \in \mathbb{N}$ such that the Borel structure on $K(M)$ is generated by the Borel subsets $ \{ \text{$C \subset M$ closed} \: : \: C \cap N_m = \emptyset \}$. Note that $P(x) \cap N_m = \emptyset$ is equivalent to $f_n(x) \in M \setminus N_m$ for all $n$ sufficiently large, which is a Borel condition.
\end{proof}

Assume that $G$ is acting cocompactly on $X$ and admits a uniform lattice. We conclude   appendix \ref{appendix} by showing that  a family of  quasiconformal densities can be chosen in measurable way for all the discrete subgroups of a given critical exponent and having divergence type. 
Given any real number $\delta \ge 0 $ denote
$$ \mathrm{DSub}_{\text{$\delta$-diver}}(G) = \{ \Gamma \in \mathrm{DSub}(\Gamma) \: : \: \text{$\delta(\Gamma) = \delta$ and $\Gamma$ is of divergence type} \}. $$

\begin{prop}
\label{prop:existance of a measurable quasiconformal density}
For every $\delta \ge 0$ there is a constant $ d \ge 1$ and a Borel map
$$ \nu : \mathrm{DSub}_{\text{$\delta$-diver}}(G) \times X \to \mathcal{M}(  \partial X), \quad \nu : (\Gamma, x) \mapsto \nu_x^\Gamma $$ 
so that $\{\nu^\Gamma_x\}_{x\in X}$ is a $\Gamma$-quasiconformal density of dimension $\delta$ and distortion $d$. Moreover we may normalize  so that $\|\nu^\Gamma_o\| = 1$ for all $\Gamma \in \mathrm{DSub}_{\text{$\delta$-diver}}(G)$.
\end{prop}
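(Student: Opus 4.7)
The plan is to run the classical Patterson--Sullivan construction parametrically over $\Gamma$, using the measurability statement of Proposition \ref{prop:s-measures are measurable} to handle the approximating finite sums and invoking Lemma \ref{measureablechoice} to obtain a Borel selection of the required weak-$*$ accumulation points.

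First I would fix a basepoint $o \in X$ and, for every $s > \delta$, point $x \in X$ and subgroup $\Gamma \in \mathrm{DSub}_{>\delta}(G)$, introduce the finite positive Borel measure
$$ \mu_x^s(\Gamma) \;=\; \frac{1}{\mathcal{P}_\Gamma(s)}\sum_{g \in \Gamma} e^{-s\, d_X(x, g o)}\, \delta_{g o} \;\in\; \mathcal{M}(X \cup \partial X), $$
where $\mathcal{P}_\Gamma(s) = \sum_{g \in \Gamma} e^{-s\, d_X(o, go)}$. An argument essentially identical to Proposition \ref{prop:s-measures are measurable} shows that for fixed $s$ and $x$ the assignment $\Gamma \mapsto \mu_x^s(\Gamma)$ is Borel, and the triangle inequality gives a uniform mass bound $\|\mu_x^s(\Gamma)\| \leq e^{s\, d_X(o, x)}$, so these measures lie in a fixed weak-$*$ compact subset $K_x$ of $\mathcal{M}(X \cup \partial X)$.

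Second I would fix a countable dense subset $D \subset X$ containing $o$ and consider the Borel map
$$ \Gamma \;\longmapsto\; \bigl(\mu_x^{\delta + 1/n}(\Gamma)\bigr)_{n \in \mathbb{N},\, x \in D} \;\in\; \prod_{n \in \mathbb{N},\, x \in D} K_x, $$
whose target is a second countable compact metrizable space. Applying Lemma \ref{measureablechoice} via a diagonal extraction yields, for each $\Gamma \in \mathrm{DSub}_{\text{$\delta$-diver}}(G)$, a Borel selection of simultaneous weak-$*$ accumulation points $\nu_x^\Gamma$ for every $x \in D$.

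Third I would verify the quasiconformal density properties. Since $\Gamma$ is of divergence type, $\mathcal{P}_\Gamma(\delta + 1/n) \to \infty$, so any compact subset of $X$ receives vanishing mass in the limit; hence $\nu_x^\Gamma$ is supported on $\partial X$, and in fact on $\Lambda(\Gamma)$. The elementary pre-limit identity
$$ \frac{\mu_x^s(\Gamma)(\{g o\})}{\mu_y^s(\Gamma)(\{g o\})} \;=\; e^{-s\,(d_X(x, g o) - d_X(y, g o))}, $$
combined with the Gromov-hyperbolic approximation of $d_X(x, z) - d_X(y, z)$ by the Busemann cocycle $\beta_\xi(x, y)$ up to an additive error bounded only in terms of the hyperbolicity constant of $X$ (uniform in $\xi$ once $z$ lies deep enough inside a shadow toward $\xi$), produces the quasiconformal Radon--Nikodym relation with a distortion $d$ depending only on $X$. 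The quasi-equivariance $g_* \nu_x^\Gamma \asymp_d \nu_{gx}^\Gamma$ for $g \in \Gamma$ is inherited directly from the exact $\Gamma$-equivariance of the pre-limit measures.

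Fourth I would extend from $D$ to all of $X$ by declaring
$$ d\nu_x^\Gamma(\xi) \;:=\; e^{-\delta\, \beta_\xi(x, o)}\, d\nu_o^\Gamma(\xi), $$
which preserves all the quasiconformal axioms at the cost of absorbing a further bounded factor into $d$ and remains Borel in $(\Gamma, x)$ since $(x, \xi) \mapsto \beta_\xi(x, o)$ is continuous. A final normalization by the positive Borel function $\Gamma \mapsto \|\nu_o^\Gamma\|$ achieves $\|\nu_o^\Gamma\| = 1$.

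The main obstacle is ensuring that the distortion constant $d$ is genuinely independent of $\Gamma$: it must come purely from the Gromov-hyperbolic Busemann approximation, not from anything specific to the selected subsequence. Handling this cleanly requires taking the weak-$*$ limits along shadows of far-away orbit points, so that the error in approximating $d_X(x, g o) - d_X(y, g o)$ by $\beta_\xi(x, y)$ is controlled uniformly by the hyperbolicity constant of $X$.
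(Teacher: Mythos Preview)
Your proposal is correct and follows essentially the same approach as the paper: run Patterson--Sullivan parametrically, use Lemma \ref{measureablechoice} for a Borel selection of weak-$*$ accumulation points, and extend to all of $X$ via the Busemann formula $d\nu_x^\Gamma = e^{-\delta\beta_\xi(x,o)}\,d\nu_o^\Gamma$, with the uniform distortion coming from the hyperbolicity constant alone (the paper cites \cite[Lemma 6.1]{MYJ} for this last point). The only difference is that the paper takes the accumulation point solely at the basepoint $o$ and then extends, whereas you first pass through a countable dense set $D$; since your final definition in step four uses only $\nu_o^\Gamma$ anyway, the simultaneous limits over $D\setminus\{o\}$ are redundant and can be dropped.
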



\begin{proof}
Consider the Borel maps $W_s$   constructed in Proposition \ref{prop:s-measures are measurable} for every $s > \delta$. We may now apply Lemma \ref{measureablechoice} to obtain 
$$\nu_o : \mathrm{DSub}_{\text{$\delta$-diver}}(G) \to \mathcal{M}(X \cup \partial X)$$ which is a Borel map assigning to each discrete subgroup $\Gamma \in \mathrm{DSub}_{\text{$\delta$-diver}}(G)$ some weak-$*$ accumulation point of the probability measures $W_s(\Gamma)$ as $s \searrow \delta$. We may now extend $\nu$ to a map defined on $\mathrm{DSub}_{\text{$\delta$-diver}}(G) \times X$ in a Borel measurable manner   by insisting that the relation
$$\frac{d\nu^{\Gamma}_{x}}{d\nu^{\Gamma}_{o}}(\xi)=e^{\delta \beta_{\xi}(o,x)}$$
should be  satisfied for every point $x \in X$.
The collection $\{\nu^{\Gamma}_{x}\}_{x\in X}$ is indeed a $\Gamma$-quasiconformal density of critical exponent $\delta$ and distortion $ d \ge 1$. These parameters depend only on the hyperbolicity constant of the Gromov space $X$ for every discrete subgroup $\Gamma \in \mathrm{DSub}_{\text{$\delta$-diver}}(G)$. See  e.g.  \cite[Lemma 6.1]{MYJ}. 

\end{proof}

\bibliography{critical}
\bibliographystyle{alpha}

\end{document}